\newtheorem{theorem}{Theorem}[section]
\newtheorem{proposition}[theorem]{Proposition}
\newtheorem{lemma}[theorem]{Lemma} 
\newtheorem{corollary}[theorem]{Corollary}
\newcommand{\CAT}{{\mathrm{CAT}}}
\newcommand{\wreath}{{\mathrm{wr}}}
\newcommand{\bbar}{\underline{{\mathrm B}}}
\newcommand{\ebar}{\underline{{\mathrm E}}}
\newcommand{\bfc}{{\mathbf{C}}}
\newcommand{\cc}{\mathcal{C}}
\newcommand{\cs}{\mathcal{S}}
\newcommand{\ind}{{\mathrm{ind}}}
\newcommand{\Lk}{{\mathrm{Lk}}}
\newcommand{\link}{\Lk}
\newcommand{\wtX}{{\widetilde X}}
\newcommand{\inverselim}{\mathop{\mathrm{inv.lim.}}}
\newcommand{\cala}{{\mathcal A}}
\newcommand{\calc}{{\mathcal C}}
\newcommand{\cald}{{\mathcal D}}
\newcommand{\calf}{{\mathcal F}}
\newcommand{\cali}{{\mathcal I}}
\newcommand{\calk}{{\mathcal K}}
\newcommand{\calt}{{\mathcal T}}
\newcommand{\frakf}{{\mathfrak F}}
\newcommand{\hatt}{{\widehat t}}
\newcommand{\hatT}{{\widehat T}}
\newcommand{\nstar}{{\mathrm{St}}}
\newcommand{\pt}{{*}}
\newcommand{\tilg}{{\widetilde G}}
\newcommand{\sing}{{\mathrm{Sing}}}
\newcommand{\ff}{{\mathbb F}}
\newcommand{\nn}{{\mathbb N}}
\newcommand{\qq}{{\mathbb Q}}
\newcommand{\rr}{{\mathbb R}}
\newcommand{\zz}{{\mathbb Z}}
\newcommand{\va}{{\mathbf a}}
\newcommand{\vb}{{\mathbf b}}
\newcommand{\vg}{{\mathbf g}}
\newcommand{\vq}{{\mathbf q}}
\newcommand{\red}{{\mathit r}}
\newcommand\mapright[1]{\smash{\mathop{\longrightarrow}\limits^{#1}}}
\newcommand\mapdown[1]{\Big\downarrow\rlap{$\vcenter{\hbox{$\scriptstyle#1$}}$}}
\title{A metric Kan-Thurston theorem}
\author{Ian J. Leary}
\date{\today}
\begin{document} 

\maketitle



\begin{abstract} 
For every simplicial complex $X$, we construct a locally CAT(0)
cubical complex $T_X$, a cellular isometric involution $\tau$ on 
$T_X$ and a map $t_X:T_X\rightarrow X$ with the following properties: 
$t_X\tau = t_X$; $t_X$ is a homology isomorphism; the induced map from
the quotient space $T_X/\langle\tau\rangle$ to $X$ is a homotopy
equivalence; the induced map from the fixed point space $T_X^\tau$ to
$X$ is a homology isomorphism.  The construction is functorial in
$X$. 

One corollary is an equivariant Kan-Thurston theorem:
every connected proper $G$-CW-complex has the same equivariant
homology as the classifying space for proper actions, $\ebar \tilg$,
of some other group $\tilg$.  From this we obtain extensions of a 
theorem of Quillen on the spectrum of a (Borel) equivariant cohomology 
ring and a result of Block concerning assembly conjectures.  
Another corollary of our main result is that there can be
no algorithm to decide whether a CAT(0) cubical group is generated by
torsion.

In appendices we prove some foundational results concerning cubical
complexes, including the infinite dimensional case.  We characterize
the cubical complexes for which the natural metric is complete; we
establish Gromov's criterion for infinite-dimensional cubical
complexes; we show that every CAT(0) cube complex is cubical; we
deduce that the second cubical subdivision of any locally CAT(0) cube
complex is cubical.
\end{abstract}


\section{Introduction} 

In \cite{KT}, D. Kan and W. Thurston showed that every simplicial
complex $X$ has the same homology as some aspherical space $T_X$, or
equivalently every connected simplicial complex $X$ has the same
homology as the classifying space for some discrete group $G_X$.  
More precisely, they give a map $t_X:T_X\rightarrow X$ which induces a
homology isomorphism for any local coefficient system on $X$.  
Furthermore, the construction of $T_X$ and $t_X$ has good naturality
properties.  The Kan-Thurston theorem has been extended by a number of
authors~\cite{BDH,Hausmann,Maunder}.  In particular, Baumslag Dyer and
Heller showed that in the case when $X$ is finite, the space $T_X$ may 
be taken to be a finite simplicial complex~\cite{BDH}.  (In contrast, 
the original $T_X$ was uncountable whenever $X$ had dimension at least
two.)  

The Kan-Thurston theorem has also been generalized in a number of
ways.  McDuff showed that every $X$ has the homotopy type of the 
classifying space of some discrete monoid
$M_X$~\cite{McDuff}.  Leary and Nucinkis showed
that every $X$ has the homotopy type of a space of the form 
$\ebar G/G$ for some discrete group $G$, where $\ebar G$ denotes the 
classifying space for proper actions of $G$~\cite{LearyNucinkis}.  

Our purpose is to give new proofs of some of these results using
methods that come from metric geometry.  We give an extremely 
short summary of these methods before stating our result.  A
good general reference for this material is~\cite{brihae}.

A geodesic metric space is a metric space in which any two points are
joined by a `geodesic', i.e., a path of length equal to their
distance.  A geodesic metric space is CAT(0) if any geodesic triangle
is at least as thin as a comparison triangle in the Euclidean plane
having the same side lengths.  Any CAT(0) space $X$ is contractible: 
for any point $x\in X$ one may define a contraction of $X$ to $x$ by 
moving points at constant speed along the (unique) geodesic path
joining them to $x$.  

Let $T$ be a cubical complex.  The link of any vertex in $T$ is 
naturally a simplicial complex, and vertex links in the 
universal cover $\widetilde T$ are isomorphic to vertex links in $T$.  
There is a metric on $T$ in which distances are defined in
terms of the lengths of piecewise-linear paths, using an isomorphism 
between each cube of $T$ and a standard Euclidean cube of side length 
one to measure the length of paths.  Gromov has given a simple
criterion for this metric to be CAT(0): this happens if and only if 
$T$ is simply connected and the link of each vertex is a flag
complex~\cite{brihae,gromov}.  In particular, it follows that if all 
vertex links in $T$ are flag complexes, then $T$ is aspherical.  

The standard references for Gromov's criterion consider only the 
case when $T$ is finite-dimensional (see
\cite[theorem~II.5.20]{brihae}, \cite[section 4.2.C]{gromov}, 
\cite[ch.~2]{bridson} or the unpublished \cite{moussong} for example).  
In Theorem~\ref{gromovthm} in an appendix to this 
paper we give a proof in the general case.  

Let $T$ be a locally CAT(0) cube complex.  There are many 
known constraints on the fundamental group $G$ of such a $T$.  
For example, if $T$ is finite, then $G$ is 
bi-automatic, which implies that there are good algorithms 
to solve the word and conjugacy problems in $G$~\cite{nibloreeves}.  
As another example, when $T$ is countable, the group $G$ has 
the Haagerup property, i.e., $G$ admits a proper affine isometric 
action on a Hilbert space.  This implies that $G$ can contain 
no infinite subgroup having Kazhdan's property (T)~\cite{nibloreevesII}.  

In contrast, we show that there is no restriction whatsoever 
on the homology of a locally CAT(0) cubical complex.  We also 
show that there is no restriction on the homotopy type of the 
quotient of a locally CAT(0) cubical complex by an isometric 
cellular involution, apart from the obvious condition that 
it should be homotopy equivalent to a CW-complex.  
The following statement is a version of our main theorem.

\begin{theorem*}[A]
Let $X$ be a simplicial complex.  There is a locally CAT(0) 
cubical complex $T_X$ and a map $t_X:T_X\rightarrow X$ with 
the following properties. 
\begin{enumerate} 

\item \label{conda}
The map $t_X$ induces an isomorphism on homology for any 
local coefficients on $X$.  

\item \label{condb}
There is an isometric cellular involution $\tau$ on $T_X$ 
so that $t_X\circ \tau= t_X$ and the induced map 
$T_X/\langle \tau\rangle \rightarrow X$ is a homotopy equivalence. 

\item 
The map $t_X:T_X^\tau
\rightarrow X$ induces an isomorphism on homology for any local
coefficients on $X$, where $T_X^\tau$ denotes the fixed point set 
in $T_X$ for the action of $\tau$.  

\end{enumerate} 
If $X$ is finite, then so is $T_X$.  The construction is functorial 
in the following senses: 
\begin{itemize}

\item Any simplicial map $f:X\rightarrow Y$ that is injective on each 
simplex gives rise to a $\tau$-equivariant cubical map
$T_f:T_X\rightarrow T_Y$.   

\item In the case when $f$ is injective, $T_f$ embeds $T_X$
  isometrically as a totally geodesic subcomplex of $T_Y$.  

\item In the case when $f$ is locally injective, $T_f$ is a locally
  isometric map.  In particular, in this case $T_f$ is injective 
  on fundamental groups.

\end{itemize}  

If $X$ is the union of subcomplexes $X_\alpha$,
then $T_X$ is equal to the union of the $T_{X_\alpha}$.  
The dimension of $T_X$ is equal to the dimension of $X$, except that 
when $X$ is 2-dimensional, $T_X$ is 3-dimensional.  In all cases, 
the dimension of $T_X^\tau$ is equal to the dimension of $X$.  
\end{theorem*}

By a locally injective map of simplicial complexes, we mean a map 
$f:X\rightarrow Y$ which is injective on each simplex of $X$, and 
for each vertex $x\in X$ induces an injective map from the link of 
$x$ to the link of $f(x)$.  Equivalently, a locally injective map 
of simplicial complexes is a simplicial map which induces a locally
injective map of realizations.


The existence of $T_X$ and $t_X$ having property~\ref{conda} is a
strengthening of the Baumslag-Dyer-Heller version of the Kan-Thurston
theorem~\cite{BDH,KT}, since the only groups used are fundamental 
groups of locally CAT(0) cubical complexes.  Similarly, 
the existence of $T_X$ and $t_X$ having
property~\ref{condb} is a strengthening of the theorem of Leary and
Nucinkis on possible homotopy types of $\ebar
G/G$ since the only groups used are groups that act 
with stabilizers of order one and two on CAT(0) cubical
complexes~\cite{LearyNucinkis}.  This follows from the fact that 
whenever a group $G$ acts with finite stabilizers on a CAT(0) 
cubical complex $X$ that cubical complex is a model for $\ebar G$
(see Theorem~\ref{ebarg}).  

We describe a number of corollaries to Theorem~A in Sections 
\ref{sec:nin}--\ref{sec:twe} of this paper.  We believe 
that none of these results follow from earlier versions of the 
Kan-Thurston theorem, and we hope that they go some way towards 
motivating our work.  

One advantage of Theorem~A over other versions of the Kan-Thurston
theorem is that it readily yields an equivariant version, which 
we state as Theorem~\ref{thm:genequiv}.  If $G$ acts on $X$ which
we suppose to be connected, then $G$ acts also on $T_X$.  Let $E$ be the 
universal covering space of $T_X$, and let $\tilg$ be the group 
of self-homeomorphisms of $E$ that lift the action of $G$ on $T_X$.  
If we assume that $G$ acts on $X$ with finite stabilizers, then 
$\tilg$ acts with finite stabilizers on the CAT(0) cubical complex
$E$.  It follows (see Theorem~\ref{ebarg}) that $E$ is a model for 
$\ebar \tilg$, the classifying space for proper actions of $\tilg$. 
By construction, there is a surjective group homomorphism 
$\tilg\rightarrow G$, and an equivariant map $\ebar\tilg\cong 
E\rightarrow X$ which is an equivariant homology isomorphism.  

Our construction of $t_X$ and $T_X$ having property~1 is closely
modelled on the construction of groups used by Maunder
in~\cite{Maunder} and by Baumslag, Dyer and Heller in~\cite{BDH}.  
The main ingredients in the proof in~\cite{BDH} can be illustrated by
solving a simpler problem: suppose that one wants to construct, for 
each $n>0$, a group $G_n$ so that the integral homology of $G_n$ is 
isomorphic to the homology of the $n$-sphere.  To solve this problem, 
they fix an acyclic group $H$, together with an element $h\in H$ of 
infinite order.  Take $G_1$ to be the subgroup of $H$ generated 
by $h$, and define groups $H_n$ (to play the role of the $n$-disc)
and $G_n$ for $n\geq 2$ as follows.  Let $H_2=H$, our fixed acyclic 
group.  Now define $G_2$ by gluing two 2-discs along a circle: 
$G_2=H*_{G_1}H$.  Define $H_3=H_2*_{G_1}(G_1\times H)$, and check 
that $H_3$ is an acyclic group containing $G_2$ as a subgroup.  
(The standard copy of $H_2$ and its conjugate by $(1,h)\in
G_1\times H$ together generate a subgroup of $H_3$ isomorphic to $G_2$.)  
The definitions of the higher groups are now clear: 
$G_{n+1}=H_{n+1}*_{G_n}H_{n+1}$ and $H_{n+1}=
H_n*_{G_{n-1}}(G_{n-1}\times H)$.  

This construction carries over without change to the metric world, 
once one has found an acyclic locally CAT(0) cubical complex $A$ to 
replace the classifying space for $H$.  The proof from~\cite{Maunder} 
when translated into the metric world gives a construction of the 
space denoted by $T_X^\tau$ in the statement of
Theorem~A.  To obtain $T_X$ and the involution
$\tau$, we need a second acyclic locally CAT(0) cubical complex 
$A'$, containing $A$ and equipped with an involution $\tau$ such 
that $A$ is equal to the fixed point set $A'^\tau$ and such that the 
quotient space $A'/\langle \tau\rangle$ is contractible.  Roughly 
speaking, $T_X$ is constructed by attaching a copy of $A'$ to every
copy of $A$ that appears in the construction of $T_X^\tau$.  
The acyclic locally CAT(0) 2-complex $A$ that we construct is a 
presentation 2-complex, in which the 2-cells are large, 
right-angled polygons which in turn are made from smaller squares.

The remainder of this paper is structured as follows.  
In Section~\ref{sec:polygons}, we consider the problem of making 
a CAT(0) polygon with given side lengths from unit squares.  In 
Section~\ref{sec:acyc} we build acyclic locally CAT(0) 2-complexes 
that will be used in the construction of $A$ and $A'$.  
Section~\ref{sec:conacyc} constructs 
$A$, $A'$ and $\tau$ as described in the previous paragraph, 
and Section~\ref{sec:main} gives the proof of Theorem~A, 
following~\cite{Maunder}.  

Sections \ref{sec:sev}~and~\ref{sec:eig} consider variants on the 
construction of $T_X$, including a cube complex whose cubical
subdivision is $T_X$, a version $T'_X$ which is always metrically 
complete but for which the map $T'_X\rightarrow X$ is not necessarily
proper, and a version $\hatT_X$ defined for arbitrary spaces $X$, 
and such that the map from $\hatT_X/\langle \tau\rangle$ to $X$ is 
a weak homotopy equivalence.  Section~\ref{sec:nin} contains 
Theorem~\ref{thm:genequiv}, an equivariant Kan-Thurston theorem 
for actions with finite stabilizers.  Sections
\ref{sec:ten}~and~\ref{sec:ele} discuss applications of
Theorem~\ref{thm:genequiv} to Borel equivariant cohomology and to
assembly conjectures.  In particular, Corollary~\ref{cor:twentytwo} 
is an extension to Quillen's theorem on the spectrum of a Borel
equivariant cohomology ring~\cite{quillen}, and Theorem~\ref{thm:assembly} 
generalizes an observation of Block~\cite[Introduction]{block}.  
Block's observation concerned the Baum-Connes conjecture for
torsion-free groups, while our Theorem~\ref{thm:assembly} applies to 
groups with torsion and to many assembly conjectures.  
In Section~\ref{sec:twe} we use Theorem~A to deduce that there 
can be no algorithm to decide whether a CAT(0) cubical group 
is generated by torsion.  Section~\ref{sec:thi} discusses some 
open questions related to our work.  

Sections \ref{sec:appone},
\ref{sec:apptwo}~and~\ref{sec:appthree} contain results about cubical 
complexes with an emphasis on the infinite-dimensional case, 
and should be viewed as appendices to the paper.  In some cases
the results for finite-dimensional cubical complexes are well-known, 
and our proofs proceed by reducing the arbitrary case to the
finite-dimensional case.  In Theorem~\ref{thm:cubecomplete} we 
characterize the cubical (and simplicial) complexes for which 
the natural path metric is complete.  In Theorem~\ref{gromovthm} 
we establish Gromov's criterion: a simply-connected cubical 
complex is CAT(0) if and only if all links are flag complexes.  
In Theorem~\ref{ebarg} we show that whenever a finite group 
acts on a CAT(0) cubical complex (not assumed to be metrically 
complete), the fixed point set is non-empty.  
Section~\ref{sec:appthree} concerns the comparison between cubical 
complexes and the more general complexes often called cube complexes.  
The result is that in the CAT(0) case there is almost no advantage 
to using cube complexes rather than cubical complexes.  
In Theorem~\ref{thm:cubevcubical}, we show that every CAT(0) cube 
complex is in fact cubical.  In Corollary~\ref{cor:secsubdiv} we 
deduce that the second cubical subdivision of every locally CAT(0) 
cube complex is cubical.  

The results in Sections~\ref{sec:appone},
\ref{sec:apptwo}~and~\ref{sec:appthree}, will not be surprising to
experts, but we know of no published proof the main results that 
covers the infinite-dimensional case.  In a private communication in 2007,
Michah Sageev told the author that Yael Algom-Kfir had obtained a
proof of Gromov's criterion for infinite-dimensional cube complexes
(proved here as Theorem~\ref{gromovthm}) while working on her MSc with
him.  Also in 2007, Yael Algom-Kfir told the author that she did not
intend to publish her proof.  Our proof of Theorem~\ref{gromovthm}
involves a reduction to the case of a finite-dimensional cubical
complex, and we rely heavily on results of Sageev~\cite{sag} in making
this reduction.

\begin{acknowledgements}
This work was started in 2001, when the author visited Wolfgang L\"uck 
in M\"unster and Jean-Claude Hausmann in Geneva.  Some progress was 
made at MSRI in 2007, and the author thanks these people and
institutes for their hospitality.  The author also thanks many 
other people for helpful conversations concerning this work, 
especially Graham Niblo, Boris Okun and Michah Sageev.  Finally, 
the author thanks Raeyong Kim and Chris Hruska for their helpful
comments on earlier version of this paper.  
\end{acknowledgements}

\section{Tesselating CAT(0) polygons} \label{sec:polygons}

We seek CAT(0) tesselations by squares of polygons with given side
lengths.  A tesselated $n$-gon is defined to be a 2-dimensional 
cubical complex $S$ which is homeomorphic to a disc, together with an 
ordered sequence of $n$ distinct vertices $v_1,\ldots,v_n$ on the 
boundary circle $\partial S$ of $S$.  It will be convenient to view 
the index set for these vertices as $\zz/n$ rather than $\zz$, so 
that $v_{n+k}=v_k$.  Call the vertices $v_i$ the {\sl corners} of 
the polygon, and call the complementary pieces of $\partial S$ 
the (open) {\sl sides} of the polygon.  The $i$th side 
is the piece whose closure contains $v_i$ and $v_{i+1}$.  
The $i$th {\sl side length}
$l_i$ is defined to be the number of edges in the $i$th side.  

Suppose that $(S,v_1,\ldots,v_n)$ is a tesselated polygon.  For $v$ a
vertex of $S$, let $n(v)$ denote the degree of $v$, i.e., the number 
of neighbouring vertices.  Define the curvature $c(v)$ of $S$ at $v$ 
to be 
\[
c(v) = \left\{\begin{array}{ll}
4-n(v)&\mbox{if $v\notin \partial S$}\\
3-n(v)&\mbox{if $v\in \partial S$ is not a corner of $S$} \\
2-n(v)&\mbox{if $v$ is a corner of $S$}
\end{array}\right.
\]
We say that $(S,v_1,\ldots,v_n)$ is a CAT(0) tesselated polygon 
if for every vertex $v\in S$, $c(v)\leq 0$.  By Gromov's criterion, 
it follows that the natural path-metric on $S$ is a CAT(0) metric 
in which each of the sides of $S$ is a geodesic.  

An easily proved combinatorial version of the Gauss-Bonnet theorem 
shows that for any $n$-gon $S$, 
\[ 
\sum_{v\in S} c(v) = 4-n.
\] 
It follows easily that the only tesselated CAT(0) quadrilaterals are 
rectangles with side lengths $(l,l',l,l')$, since a CAT(0)
quadrilateral $S$ must have $c(v)=0$ for all $v\in S$.  Similarly, a
tesselated CAT(0) pentagon can have $c(v)\neq 0$ for just one
vertex~$v$.

Clearly, some restrictions are required on the side lengths $l_i$ 
for the construction of a CAT(0) $n$-gon.  
For example, the perimeter $p$, defined by $p=\sum_i l_i$ 
must be even since every edge not in $\partial S$
belongs to exactly two squares.  Note also it cannot be the
case that any $l_i$ is greater than or equal to $p/2$, since each 
side of the polygon is a geodesic.  We do not characterize the 
sequences $(l_1,\ldots,l_n)$ for which $S$ exists, but just give 
some constructions that suffice for our purposes.  

First we give three ways to obtain a new CAT(0) polygon, starting 
from a CAT(0) $n$-gon with side lengths $(l_1,\ldots,l_n)$.  
 
One can introduce a new corner in the middle of the $i$th side 
of the $n$-gon, to produce an $(n+1)$-gon: if $k$ satisfies $0<k<l_i$ 
then an $(n+1)$-gon with side lengths $(l_1,\ldots,l_{i-1}, 
k,l_i-k,\ldots,l_n)$ is obtained.  

One can add a $1\times l_i$ rectangle to the $i$th side of 
the $n$-gon, producing a new $n$-gon with side lengths 
$(l_1,\ldots,l_{i-1}+1,l_i,l_{i+1}+1,\ldots,l_n)$.  By applying 
this process once for each $i$, one obtains an $n$-gon with 
side lengths $(l_1+2,\ldots,l_n+2)$ consisting of the original 
$n$-gon with a `collar' attached.  

One can take the cubical subdivision of an $n$-gon, producing an
$n$-gon with side lengths double those of the original $n$-gon.  

Next we discuss two ways to build a CAT(0) polygon, starting 
just from the required side lengths.  

Suppose that $R$ is a Euclidean rectangle tesselated by 
squares, with perimeter $p$ and corners $w_1,\ldots,w_4$.  Any
choice of $n$ vertices $v_1,\ldots,v_n$ in $\partial R$ which 
gives the required side lengths gives a way to view $R$ as a 
tesselated $n$-gon.  However, this $n$-gon will fail to be 
CAT(0) if $\{w_1,\ldots,w_4\}\not\subseteq \{v_1,\ldots,v_n\}$.  
Define subrectangles $R'_1,\ldots,R'_4$ of $R$ as follows: if 
there exists $i$ such that $w_j=v_i$, then let $R'_j$ be the 
single point $w_j$; if $w_j$ lies in the interior of the $i$th 
side of $R$, let $R'_j$ be the subrectangle of $R$ with 
$w_j$, $v_i$ and $v_{i+1}$ as three of its vertices.  
Provided that the four rectangles $R'_1,\ldots,R'_4$ are 
disjoint, we define $S$ to be the closure in $R$ of 
$R-\bigcup_{j=1}^4R'_j$.  This $S$ is a CAT(0) $n$-gon with 
the required side lengths.  Note that the curvature of $S$ 
is concentrated along $\partial S$, since each interior 
vertex of $S$ has valency 4.  

For example, consider the problem of realizing a pentagon with side 
lengths $(2,2,1,2,1)$.  If we start with a square of side 2, there 
are two distinct ways to cut up the boundary.  One of these does
not work, since two of the subrectangles $R'_j$ and $R'_{j+1}$
corresponding to adjacent vertices meet.  The other way gives a 
valid $S$ in the form of an `L'-shape consisting of three squares.  

As a second example, consider the problem of realizing a hexagon with 
sides $(2,1,1,2,1,1)$.  Again, there are two ways to break up the 
boundary of a square $R$ of side 2, one of which gives $S=R$, with 
four of the hexagon's vertices coincident with vertices of $R$ and 
two on the midpoints of opposite edges.  The other way does not 
work, since for some $j$ the subrectangles $R'_j$ and $R'_{j+2}$
meet, and so the construction gives a bow-tie consisting of two 
squares joined at a vertex.  The defect with this `hexagon' is that 
the midpoints of the two long sides coincide.  There is however another 
way to realize the hexagon with sides $(2,1,1,2,1,1)$, as a suitably 
labelled $1\times 3$ rectangle.  

As a third example, Figure~\ref{fig:one} illustrates a CAT(0) octagon
with seven sides of length 2 and one side of length 4 that was
constructed in this way.  Note that the curvature in this octagon is 
concentrated at two of its corners and at the midpoints of two of its 
sides.

\begin{figure}
\begin{center}
\includegraphics[scale=2]{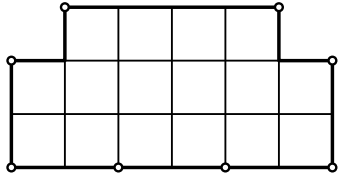}
\end{center}
\caption{\label{fig:one}A CAT(0) octagon}
\end{figure}

One corollary of this construction is that every sequence 
$(l_1,\ldots,l_n)$ with $p$ divisible by 4 and each $l_i$ strictly less than 
$p/8$ can be realized: the condition on the lengths $l_i$ guarantees
that if we choose $R$ to be a square, then the four rectangles $R'_j$
are disjoint.

Next consider the problem of constructing a CAT(0) $n$-gon $S$ 
in which all the curvature is concentrated at a single vertex.  
In the case when the vertex $v$ with non-zero curvature is in 
the interior of $S$, 
this is equivalent to identifying the sides of $n$ rectangles 
$R_1,\ldots, R_n$ where $R_i$ has side lengths $k_i$ and $k_{i+1}$.
(As usual the index $i$ should be assumed to lie in the integers
modulo $n$.)  This gives rise to a CAT(0) $n$-gon with side lengths
$l_i$ given by the equations $l_i=k_i+k_{i+2}$.  The cases when the
vertex with non-zero curvature is in $\partial S$ arise as degenerate
cases.  If $k_{i+1}=0$ but $k_j>0$ for $j\neq i$, then the vertex with
non-zero curvature lies in the interior of the $i$th side of the
$n$-gon.  If $k_i=k_{i+1}=0$ but $k_j>0$ for $j\neq i,i+1$, then the
curvature is concentrated at the corner $v_i$ of $S$.  Other
degenerate cases do not give rise to CAT(0) $n$-gons.

The equations $l_i=k_i+k_{i+2}$ can be solved uniquely for $k_i$ if 
and only if $n$ is not divisible by 4.  If $n= 4m$, there is a 
solution if and only if 
\[\sum_{i=1}^m l_{4i-3} = \sum_{i=1}^m l_{4i-1} 
\quad \mbox{and} 
\quad 
\sum_{i=1}^m l_{4i-2} = \sum_{i=1}^m l_{4i}.
\] 
If we define $m$ by $m=n$ in the case when $n$ is odd, and 
$m=n/2$ when $n$ is congruent to 2 modulo 4, then the unique 
solution in these cases is given by 
\[k_i = \frac{1}{2}\sum_{j=0}^{m-1} (-1)^j l_{i+2j}.
\]

At least in the case when 4 does not divide $n$, this solves 
completely the question of whether there exists a CAT(0) $n$-gon 
with curvature at just one vertex and side lengths $(l_1,\ldots,l_n)$.   
The above equation computes $k_1,\ldots,k_n$, and then 
there is such an $n$-gon if and only if each $k_i$ is a positive 
integer, and either at most one $k_i$ is equal to zero or there 
exists $i$ such that $k_j=0$ if and only if $j=i$ or 
$j=i+1$.  Since every CAT(0) pentagon has curvature at just one 
vertex, this completely solves the question of the possible side 
lengths of CAT(0) pentagons.  

Figure~\ref{fig:two} shows three CAT(0) hexagons that can be 
built in this way by identifying the sides of six Euclidean
rectangles.  

\begin{figure}
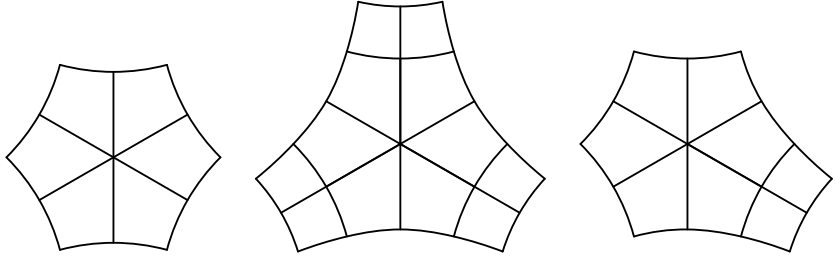

\begin{center}
\includegraphics[scale=1]{hexagon1.mps}
\quad\includegraphics[scale=1]{hexagon2.mps}
\quad\includegraphics[scale=1]{hexagon3.mps}
\end{center}
\caption{\label{fig:two}
Three CAT(0) hexagons}
\end{figure}



\section{Acyclic 2-complexes}\label{sec:acyc}

We give some constructions for acyclic locally CAT(0) 2-complexes.
In each case we start with a balanced finite group presentation.  
We take a rose in which each petal has the same length, and attach 
right-angled CAT(0) 2-cells as constructed in
Section~\ref{sec:polygons}.  Since we wish to build our right-angled
polygons using unit squares, the lengths of the petals of the roses 
that we use will be larger; petals of length 4 will suffice.

Fix integers $m$ and $n$ with $3\leq m\leq n$, and 
construct a 2-complex $Y$ by
attaching $2n$ polygons with $2m$ sides to a rose $Y^1$ with $2n$
petals.  The petals of the rose will be labelled $a_i$ and $b_i$ for
$i\in \zz/n$.  The $2m$ sides of the polygons will be labelled not by
a single letter, but instead by a longer word.  To describe the
process it is convenient to let $A_i$ stand for a reduced word in the
letters $\{a_1,\ldots,a_n\}$ and their inverses which begins and ends
with $a_i$.  For example, $a_1a_2^{-2}a_3a_1$ is a valid choice for
$A_1$, whereas $a_1^{-1}a_3^2a_1$ is not.  Similarly, let $B_i$ denote
a reduced word in the letters $\{b_1,\ldots,b_n\}$ and their inverses
which begins and ends with $b_i$.  Define a meeting point to be a pair
consisting of a word $a_i^{\epsilon_1}b_j^{\epsilon_2}$ and its formal
inverse $b_j^{-\epsilon_1}a_i^{-\epsilon_2}$, where $1\leq i,j\leq n$
and $\epsilon_1,\epsilon_2\in \{\pm 1\}$, so that there are $4n^2$
distinct meeting points.  The sides of the polygons will be labelled
by words $A_i$ and $B_j$ of the types described above, or their
inverses, in such a way that at the meeting points appearing at the
$4mn\leq 4n^2$ corners of the polygons are all distinct.  It is clear that
there are many ways to do this.  Two such labellings will suffice for
our purposes, the $2n$ words
\[ A_1B_jA_2B_{j+1}A_3\cdots A_mB_{j+m-1}\quad\hbox{and}\quad  
A_1B_j^{-1}A_2B_{j-1}^{-1}A_3\cdots A_mB_{j-m+1}\] 
for $j\in \zz/n$ or the $2n$ words 
\[A_1B_jA_2B_j\cdots A_mB_j\quad \hbox{and}\quad
A_1B_j^{-1}A_2B_j^{-1}A_3\cdots A_mB_j^{-1}\]
for $j\in \zz/n$.  In each case, the different appearances of
each $A_i$ and $B_j$ should be read as representing possibly 
different words of the allowed type, rather than lots of copies of the 
same word.  

The subwords $A_i$ and $B_j$ occurring in the $2n$ words that
represent the boundaries of the polygons should be chosen in 
such a way that there is a tesselated CAT(0) polygon whose 
side lengths are four times the word lengths $|A_i|$ and $|B_j|$.  
For example, if $m\geq 5$ and all of the words $A_i$ and $B_j$ are 
of approximately equal lengths, then such tesselated CAT(0) polygons
can always be made by cutting the corners off a Euclidean rectangle 
as described in the previous section.  

The 2-complex $Y$ is built by starting with a rose $Y^1$ with $2n$
petals, each built from four 1-cells.  Each petal is labelled by one of
the letters $a_i$ or $b_i$.  Now the $2n$ tesselated CAT(0) polygons
are attached using the $2n$ words described above.  To check that this
2-complex is locally CAT(0), we check the link of each vertex.  To
simplify the proof, we consider only the case when the curvature in 
each of the $2n$ polygons is concentrated at interior vertices.  The
links of vertices in the interiors of the polygons are circles of
circumference at least $2\pi$, and so cause no problems.  Similarly,
the link of any vertex in $Y^1$, apart from the central vertex of the
rose, consists of two points joined by a number of arcs each of length
$\pi$.  It remains to consider the central vertex of the rose.  The
link of this vertex is a graph with $4n$ vertices, the inward and
outward ends of the $2n$ petals of the rose, which we shall denote by
$a_i^{(i)}$, $a_i^{(o)}$, $b_i^{(i)}$ and $b_i^{(o)}$ for $i\in
\zz/n$.  There are $4mn$ edges in this graph of length
$\pi/2$, coming from the corners of the $2n$-gons, and a large number
of edges of length $\pi$, coming from the interiors of the sides of
the $2n$-gons.  For each $i,j\in \zz/n$ and each $x,y\in \{i,o\}$,
there is at most one short edge between $a_i^{(x)}$ and $b_j^{(y)}$.
Thus the short edges form a subgraph of the complete 
bipartite graph $K^{2n,2n}$.
Each long edge either has both of its vertices in $\{a_i^{(x)}: i\in
\zz/n,\,\, x\in \{i,o\}\}$ or both of its vertices in $\{b_i^{(x)}:
i\in \zz/n,\,\, x\in \{i,o\}\}$.  The fact that the words along the
sides of the $2n$-gons are reduced implies that no single long edge is
a loop.  Hence the shortest loops in the link graph consist of either
two long edges, one long and two short edges, or four short edges.  It
follows that the 2-complex $Y$ is locally CAT(0).  

The 2-complex $Y$ will be acyclic if and only if the abelianization of
its fundamental group is trivial.  This of course places extra
conditions on the words along the edges of the $2n$-gons.  We state 
some results that can be proved using this process.

\begin{remark} 
Note that the subcomplex of $Y$ consisting of the 0-cell and the 
1-cells labelled `$a_i$' is a totally geodesic subcomplex, since 
the points $a_i^{(x)}$ in the vertex link in $Y$ are separated by 
at least $\pi$.  It follows that the subgroup of $\pi_1(Y)$ generated
by $a_1,\ldots, a_n$ is a free group on these $n$ generators.  
Similarly, the subgroup of $\pi_1(Y)$ generated by $b_1,\ldots,b_n$ 
is a free group freely generated by these elements.  
\end{remark} 

\begin{proposition} 
\label{prop:acycone} 
The group presented  on generators $a,\ldots,f$ subject to the
following 6-relators is non-trivial, torsion-free and acyclic.  The
corresponding presentation 2-complex may be realized as a 
non-positively curved  square complex (or 
as a negatively curved polygonal complex).  
\[abcdef,\qquad 
ab^{-1}c^2f^{-1}e^2d^{-1}, \qquad 
a^2fc^2bed,\]
\[ad^{-2}cb^{-2}ef^{-1},\qquad 
ad^2cf^2eb^2,\qquad 
af^{-2}cd^{-1}eb^{-2}.  
\]
\end{proposition}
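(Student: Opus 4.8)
The plan is to recognise the displayed presentation as the case $n=3$ of the construction of Section~\ref{sec:acyc}, and then to verify the hypotheses needed there together with the acyclicity condition.

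First I would fix the dictionary: in the notation of Section~\ref{sec:acyc} take $n=3$, with $a_1,a_2,a_3=a,c,e$ and $b_1,b_2,b_3=b,d,f$. Reading each relator in alternating blocks shows that the six relators are exactly the $2n=6$ words of the two families displayed there, for $j\in\zz/3$, realised with the very short choices $A_i\in\{a_i,a_i^{2}\}$ and $B_j\in\{b_j,b_j^{2}\}$; for example $abcdef=A_1B_1A_2B_2A_3B_3$ and $ab^{-1}c^{2}f^{-1}e^{2}d^{-1}=A_1B_1^{-1}A_2B_3^{-1}A_3B_2^{-1}$. Each block is reduced and begins and ends with the prescribed letter, so the first hypothesis of Section~\ref{sec:acyc} holds; it then remains to confirm that the $4n^{2}=36$ meeting points $a_i^{\pm1}b_j^{\pm1}$ occurring at the $36$ corners of the six hexagons are pairwise distinct, which is a finite check.

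Next I would supply the polygons. For each relator the attached right-angled hexagon must have side lengths $4|A_i|$ and $4|B_j|$, so that with petals of length $4$ the complex $Y$ becomes a genuine square complex. As $n=3$ is too small to invoke the ``cut the corners off a rectangle'' recipe, I would use instead the one-interior-vertex construction of Section~\ref{sec:polygons}: a hexagon has $n=6\equiv 2\pmod 4$, so its side lengths $l_i$ determine $k_i=\tfrac12\sum_{j=0}^{2}(-1)^{j}l_{i+2j}$, and a direct computation shows that in all six cases each $k_i$ is a positive integer (up to cyclic rotation one gets the constant sequence for $abcdef$ and a rearrangement of $(2,2,2,2,6,2)$ or of $(2,4,2,4,2,4)$ in the other cases), so each hexagon is assembled from six Euclidean rectangles as in Section~\ref{sec:polygons} and is a CAT(0) tesselated polygon. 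By the link computation already carried out in Section~\ref{sec:acyc} together with Theorem~\ref{gromovthm}, the resulting square complex $Y$ is then locally CAT(0); hence its universal cover is CAT(0), so $Y$ is aspherical and $\pi_1(Y)$ is torsion-free. By the remark immediately preceding the proposition, the letters $a,c,e$ (that is, $a_1,a_2,a_3$) generate a free subgroup of rank $3$ in $\pi_1(Y)$, so $\pi_1(Y)\neq 1$.

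Finally, for acyclicity I would pass to the presentation $2$-complex $P$, of which $Y$ is a subdivision, so $H_*(Y)=H_*(P)$; since $P$ has one $0$-cell, six $1$-cells and six $2$-cells, it is acyclic if and only if the $6\times6$ integer matrix $M$ whose rows are the exponent-sum vectors of the six relators has $\det M=\pm1$ (then $H_1(P)=\operatorname{coker}M=0$, and, $M$ being then nonsingular, $H_2(P)=\ker M=0$). Subtracting the first row of $M$ from the others and then expanding repeatedly along the columns that become nearly empty reduces the computation to a single $3\times3$ determinant, which equals $1$; hence $\det M=\pm1$ (indeed $+1$), and so $Y$, and therefore $\pi_1(Y)$, is acyclic. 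The parenthetical negatively curved polygonal realisation is obtained in the same spirit, metrising each hexagonal $2$-cell as a hyperbolic hexagon with all six corner angles at least $\pi/2$ (possible, for instance, by taking the hexagon close to Euclidean) and subdividing its sides as before; the vertex links are the same graphs as in the square case, now with all edge lengths at least $\pi/2$, so every loop still has length at least $2\pi$ and Gromov's criterion yields a locally CAT$(-1)$ complex. I expect the only genuinely arithmetic step to be the evaluation of this $6\times6$ determinant; everything else is a routine verification against Sections~\ref{sec:polygons}--\ref{sec:acyc}.
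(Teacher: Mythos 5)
Your proposal is correct and takes essentially the same route as the paper: you identify the presentation as the $n=3$ case of the Section~\ref{sec:acyc} construction with each $A_i$, $B_j$ a power of a single letter, build the three required CAT(0) hexagons (side lengths $(4,4,4,4,4,4)$, $(8,4,8,4,4,4)$ and $(8,4,8,4,8,4)$ up to rotation) by gluing six rectangles exactly as in Figure~\ref{fig:two}, and then quote the link analysis of that section. The only difference is that you spell out two verifications the paper leaves to the general discussion there — the exponent-sum determinant being $\pm 1$ (which is correct) for acyclicity, and the free subgroup on $a,c,e$ for non-triviality — so no further changes are needed.
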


\begin{proof} 
Here $n=3$, and we have written $a,c,e$ in place of $a_1,a_2,a_3$ 
and similarly $b,d,f$ in place of $b_1,b_2,b_3$.  Each word $A_i$
and $B_j$ consists of a power of a single letter.  Using
techniques from Section~\ref{sec:polygons}, one shows easily that
CAT(0) hexagons with side lengths $(4,4,4,4,4,4)$, $(8,4,8,4,4,4)$ 
and $(8,4,8,4,8,4)$ can be built from unit squares, and these suffice 
to make this 2-complex: in fact, the cubical subdivisions of the 
three hexagons shown in Figure~\ref{fig:two} can be used for this 
purpose.  The claim 
in parentheses follows from the fact that obtuse hexagons with 
side lengths in these ratios can be realized in the hyperbolic 
plane.  
\end{proof}

\begin{proposition}\label{prop:acyctwo} 
There is a non-contractible finite locally CAT(0) acyclic
2-dimensional cubical complex $Y$ which admits a cubical involution 
$\tau:Y\rightarrow Y$ such that 
\begin{enumerate}
\item The fixed point set $Y^\tau$ consists of a single point;
\item The quotient space $Y/\tau$ is contractible;
\item $Y$ contains an isometrically embedded $\tau$-invariant
  2-petalled rose with $\tau$ acting by swapping the two petals.
\end{enumerate} 
\end{proposition}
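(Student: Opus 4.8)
The plan is to realise $Y$ as one of the $2n$-gon complexes constructed in this section, arranged so that the whole configuration is symmetric under the formal substitution $\tau$ interchanging $a_i$ with $b_i$. Fix $n$ large enough (say $n\geq 5$), form the rose $Y^1$ with $2n$ petals $a_1,\dots,a_n,b_1,\dots,b_n$, each built from four $1$-cells, and attach $2n$ tesselated CAT(0) $2n$-gons whose boundary words come in $n$ pairs $\{w_k,\tau(w_k)\}$: one chooses $n$ reduced words $w_1,\dots,w_n$ of the allowed alternating type, together with their $\tau$-images, and arranges them so that all $4n^2$ meeting points occurring at the corners are distinct, which is possible for exactly the same reasons as before. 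Imposing $|A_i|=|B_i|$, the words $w_k$ and $\tau(w_k)$ have identical sequences of side lengths, so the tesselated polygon attached along $\tau(w_k)$ may be taken to be a second copy of the one attached along $w_k$; then the substitution $a_i\leftrightarrow b_i$, together with the identification of the paired polygons, extends to a cubical involution $\tau$ of $Y$. Provided the words are reduced and each polygon's curvature is concentrated at interior vertices, the link computation given above applies verbatim and shows $Y$ is locally CAT(0); taking the petals to have length $4$, as in this section, makes $Y$ a genuine cubical complex, and it is aspherical by Theorem~\ref{gromovthm}.

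Next I would read off the two properties of $\tau$ directly from the construction. Since $\tau$ carries petal $a_i$ to petal $b_i$ and permutes the $2n$ polygons in $n$ pairs without fixing any, it fixes no edge and no $2$-cell; the only cell it can fix is the central vertex $v$ of the rose, and in the link of $v$ it interchanges the $a$-directions with the $b$-directions, so it has no fixed point there either. Hence $v$ is an isolated fixed point and $Y^\tau=\{v\}$, which is property~1. Moreover $Y/\langle\tau\rangle$ is the presentation $2$-complex with a single vertex, $n$ edges $\bar a_i$ (the common image of $a_i$ and $b_i$), and $n$ two-cells attached along the words $\bar w_1,\dots,\bar w_n$, where $\bar w_k$ is obtained from $w_k$ by the substitution $b_i\mapsto a_i$; the identity $\overline{\tau(w_k)}=\bar w_k$ ensures that the two polygons of a $\tau$-pair descend to a single $2$-cell.

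It remains to choose the words so that $Y$ is acyclic and $Y/\langle\tau\rangle$ is contractible; non-contractibility of $Y$ is then automatic, since $Y$ is aspherical and, by the Remark above concerning the subcomplex spanned by $a_1,\dots,a_n$, its fundamental group contains a free group of rank $n\geq 5$. For $Y/\langle\tau\rangle$ one must be careful: having trivial homology is not enough, so I would arrange that $\pi_1(Y/\langle\tau\rangle)=\langle a_1,\dots,a_n\mid \bar w_1,\dots,\bar w_n\rangle$ is the trivial group; since $\chi(Y/\langle\tau\rangle)=1-n+n=1$, triviality of $\pi_1$ forces $H_1=0$ and hence $H_2=0$, and a simply connected acyclic $2$-complex is contractible. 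For $Y$: since $\chi(Y)=1-2n+2n=1$ and $H_2(Y)$ is free abelian, $Y$ is acyclic exactly when $H_1(Y)=0$, i.e.\ when its $2n\times 2n$ abelianised relator matrix has determinant $\pm1$; the $\tau$-symmetry forces this matrix to have block form $\left(\begin{smallmatrix}\alpha&\beta\\ \beta&\alpha\end{smallmatrix}\right)$, with $\alpha,\beta$ the $n\times n$ matrices of $a$- and $b$-exponent sums of $w_1,\dots,w_n$, so its determinant equals $\det(\alpha+\beta)\det(\alpha-\beta)$; since the abelianised relator matrix of $Y/\langle\tau\rangle$ is $\alpha+\beta$, triviality of the quotient group already forces $\det(\alpha+\beta)=\pm1$, and then $Y$ is acyclic precisely when in addition $\det(\alpha-\beta)=\pm1$.

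The main, and essentially the only non-routine, step is therefore to exhibit an explicit $n$ and an explicit $\tau$-symmetric family $\{w_k,\tau(w_k)\}$ of reduced words of the allowed type — with all the $|A_i|,|B_j|$ of comparable size, so that the required tesselated CAT(0) $2n$-gons can be cut from rectangles as in Section~\ref{sec:polygons} — for which a short sequence of Tietze transformations reduces $\langle a_1,\dots,a_n\mid \bar w_1,\dots,\bar w_n\rangle$ to the trivial group and for which $\det(\alpha-\beta)=\pm1$. The obstacle is exactly the tension between killing the quotient group and keeping $Y$ acyclic, but the factorisation $\det(\alpha+\beta)\det(\alpha-\beta)$ above shows that the two requirements do not conflict, and a suitably symmetrised variant of the presentation in Proposition~\ref{prop:acycone} (with $n$ not too large) should do; I expect verifying it to be a finite, if slightly tedious, check.
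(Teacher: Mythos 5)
There is a genuine gap, and in fact two. First, your symmetry $\tau\colon a_i\leftrightarrow b_i$ is incompatible with the distinct-meeting-points requirement that underlies the link computation you invoke. A meeting point $(i,j,\epsilon_1,\epsilon_2)\mapsto a_i^{\epsilon_1}b_j^{\epsilon_2}$ is sent by your $\tau$ to the meeting point $(j,i,-\epsilon_2,-\epsilon_1)$, so $\tau$ fixes exactly the $2n$ meeting points of the form $a_i^{\pm}b_i^{\mp}$. Since your $2n$ polygons are swapped in pairs $\{w_k,\tau(w_k)\}$, the multiset of meeting points occurring at corners is of the form $M_0+\tau(M_0)$, so every $\tau$-fixed meeting point occurs an \emph{even} number of times. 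But with $2n$ polygons of $2n$ sides there are exactly $4n^2$ corners, so ``all $4n^2$ meeting points distinct'' would force each fixed meeting point to occur exactly once --- impossible; and if you avoid the fixed meeting points, the pigeonhole principle forces some other meeting point to repeat. A repeated meeting point gives two short ($\pi/2$) edges between the same pair of link vertices, i.e.\ a loop of length $\pi$ in the vertex link, so your $Y$ is not locally CAT(0). Thus the claim that the arrangement is ``possible for exactly the same reasons as before'' fails, and the $a\leftrightarrow b$ swap cannot be used in this framework without changing the shape or number of the polygons.

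Second, even granting a symmetric locally CAT(0) setup, the step you defer --- exhibiting words for which $\langle a_1,\dots,a_n\mid \bar w_1,\dots,\bar w_n\rangle$ is the trivial group while $\det(\alpha-\beta)=\pm1$ and all the combinatorial/metric constraints hold --- is precisely the non-routine heart of the proposition, not a ``slightly tedious check'': you are asking for a balanced presentation of the trivial group with relators of a very constrained alternating shape, and the determinant factorisation only removes a numerical obstruction, it produces nothing. The paper avoids this entirely by choosing a different involution: with $n=4$ it takes the shift $\tau.a_i=a_{i+2}$, $\tau.b_i=b_{i+2}$ (the square of a free $\zz/4$-action), which has no fixed meeting points and freely permutes generators and octagons, and it chooses $A_i=a_ia_{i+2}a_i^{-2}a_{i+2}^{-1}a_i$, $B_i=b_ib_{i+2}b_i^{-2}b_{i+2}^{-1}b_i$ so that after the identifications $p(a_i)=p(a_{i+2})$, $p(b_i)=p(b_{i+2})$ each attaching word freely reduces to a single petal; hence $Y/\tau$ is visibly a wedge of four $2$-discs and contractible, with no group-theoretic problem to solve. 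Your write-up would need either this kind of free shift symmetry (or another device with the same effect) and an explicit choice of words before it constitutes a proof.
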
 

\begin{proof} 
Let $n=4$, and for $i\in \zz/n$ define 
\[A_i=a_ia_{i+2}a_i^{-2}a_{i+2}^{-1}a_i, \qquad
B_i=b_ib_{i+2}b_i^{-2}b_{i+2}^{-1}b_i.\] 
These words $A_i$ and $B_i$ will be fixed throughout this proof.  
Now consider the $2n$ words 
\[a_iA_iB_iA_{i+1}B_iA_{i+2}B_iA_{i+3}B_i\quad \hbox{and}\quad
b_iB_iA_i^{-1}B_iA_{i+1}^{-1}B_iA_{i+2}^{-1}B_iA_{i+3}^{-1}.\]
The techniques of Section~\ref{sec:polygons} enable one to construct a
CAT(0) octagon with each side length 24 except for one side of length
28.  For example, one could take the CAT(0) tesselated octagon
depicted in Figure~\ref{fig:one}, and collar it five times to produce
a CAT(0) octagon with seven sides of length 12 and one of length
14.  The cubical subdivision of this octagon is the octagon that we 
require.  Attach 8 copies of this octagon to an 8-petalled rose (with
petals labelled by the 8 generators) to make the 2-complex $Y$.  

There is an action of $\zz/4$ on the eight generators, with 
$1\in \zz/4$ acting by sending $a_i$ to $a_{i+1}$ and $b_i$ to
$b_{i+1}$.  This extends to a free action of $\zz/4$ on the 
eight octagons of $Y$.  We define the involution $\tau$ to 
be the action of the element $2\in \zz/4$, so that 
$\tau.a_i=a_{i+2}$ and $\tau.b_i=b_{i+2}$.  The fixed point 
set for the action of $\tau$ on $Y$ is just the central vertex
of the rose, and so is a single point as claimed.  Let $p:Y
\rightarrow Y/\tau$ be the quotient map.  There is a natural 
cell structure on $Y/\tau$ with one 0-cell and four 1-~and 
2-cells.  Since $p(a_i)=p(a_{i+2})$ and $p(b_i)=p(b_{i+2})$, 
the words describing the attaching maps for the four 2-cells 
are no longer reduced.  After reduction they are equal to 
$p(a_1)$, $p(a_2)$, $p(b_1)$ and $p(b_2)$.  Thus the 2-complex
$Y/\tau$ is homotopy equivalent to a wedge of four 2-discs, 
and so is contractible.  

The subcomplex of the rose consisting of the base vertex and 
the petals labelled $a_0$ and $a_2$ is a 2-petalled $\tau$-invariant 
rose which is isometrically embedded in $Y$.  
\end{proof}  

\begin{remark} 
Here are two other families of group presentations that can be 
shown to arise as fundamental groups of finite locally CAT(0)
square complexes using similar techniques: 
\begin{itemize}
\item Generators $a,\ldots, f$, and relators 
\[abcb^{-1}eb,\quad adcd^{-1}ed,\quad afcf^{-1}ef\] 
\[ab^{-1}cb^2eb^{-1},\quad a^2d^{-1}cded^{-1},\quad
af^{-1}c^2fef^{-1}.\] 
\item Fix $n\geq 8$, and take generators $a_i$ and relators
  $a_ia_{i+1}a_{i+3}^{-1}a_{i+1}^{-1}a_{i+3}$, for $i\in\zz/n$.  
\end{itemize} 
\end{remark}

\section{Contractible and acyclic complexes} 
\label{sec:conacyc}

For our main result we require a pair of locally CAT(0) cubical 
complexes $(A',A)$ and a cubical involution $\tau$ of the pair 
with the following properties: 
\begin{enumerate} 
\item $A$ and $A'$ are acyclic
\item $A$ is not contractible 
\item $A$ is a totally geodesic subcomplex of $A'$
\item $A$ is contained in the fixed point set for $\tau$ acting on $A'$ 
\item $A'/\tau$ is contractible 
\end{enumerate}

The following proposition gives a fairly simple construction 
that has almost all of the properties that we require.  

\begin{proposition} \label{prop:acyclem}
Let $A$ be an acyclic CW-complex, and let $\tau$ denote the involution
of $A\times A$ defined by $\tau(a,b)=(b,a)$.  The quotient space 
$C=(A\times A)/\langle \tau\rangle$ is contractible.  
\end{proposition}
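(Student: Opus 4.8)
The plan is to prove that $C$ is simply connected and acyclic; since $C$ is a CW-complex — the quotient of $A\times A$, after a subdivision in which $\tau$ becomes a regular cellular involution (one fixing each invariant cell pointwise), by $\tau$ — the Hurewicz and Whitehead theorems then give that $C$ is contractible. Throughout I take as basepoint the point $c_0=[a_0,a_0]$ on the image of the diagonal, for a fixed vertex $a_0\in A$; note that $(a_0,a_0)$ is $\tau$-fixed, so its $\tau$-orbit is a single point and $p^{-1}(c_0)=\{(a_0,a_0)\}$, where $p\colon A\times A\to C$ is the quotient map.

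First I would handle $\pi_1$. Since $C$ is a CW-complex, every loop at $c_0$ is homotopic to an edge-loop, and an edge-loop lifts edge-by-edge to an edge-path in $A\times A$ starting at $(a_0,a_0)$; because $p^{-1}(c_0)$ is a single point, such a lift is a loop, so $p_*\colon\pi_1(A\times A,(a_0,a_0))\to\pi_1(C,c_0)$ is onto. Put $G=\pi_1(A,a_0)$, so $\pi_1(A\times A)=G\times G$ with $\tau$ acting by the coordinate swap $\sigma$. A loop $(\gamma_1,\gamma_2)$ and its swap $(\gamma_2,\gamma_1)$ have literally the same image under $p$, so $p_*=p_*\circ\sigma_*$; hence $p_*$ annihilates $x\,\sigma_*(x)^{-1}$ for all $x$, and a short computation ($x=(\alpha,\beta)$ gives $x\sigma_*(x)^{-1}=(\alpha\beta^{-1},(\alpha\beta^{-1})^{-1})$) shows these elements are exactly the $(\gamma,\gamma^{-1})$, $\gamma\in G$. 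The quotient of $G\times G$ by the normal closure of $\{(\gamma,\gamma^{-1}):\gamma\in G\}$ is isomorphic to $G^{\mathrm{ab}}=H_1(A;\zz)=0$; since $p_*$ is onto and factors through this trivial group, $\pi_1(C)=1$.

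For homology I would show $\widetilde H_*(C;\zz)=0$ in two steps. First, with $\tau$ regular one has $C_*(C;\zz)\cong C_*(A\times A;\zz)_{\zz/2}$ (coinvariants), so the norm transfer $\bar x\mapsto x+\tau x$ is a chain map $C_*(C)\to C_*(A\times A)$, and $p_*$ composed with it is multiplication by $2$ on $H_*(C)$; as $\widetilde H_*(A\times A;\zz)=0$ by the Künneth theorem, $\widetilde H_n(C;\zz)$ is annihilated by $2$ for every $n$, hence is an $\ff_2$-vector space, and it suffices to prove $\widetilde H_*(C;\ff_2)=0$. For this I would use Smith theory. Let $E_*=\widetilde C_*(A\times A;\ff_2)$ and $\rho=1+\tau$, so $\rho^2=0$; regularity of $\tau$ yields short exact sequences of chain complexes $0\to\ker\rho\to E_*\xrightarrow{\ \rho\ }\rho E_*\to 0$, then $0\to\rho E_*\to\ker\rho\to\widetilde C_*(\Delta;\ff_2)\to 0$, and $0\to\rho E_*\to E_*\to\widetilde C_*(C;\ff_2)\to 0$, where $\Delta\cong A$ is the fixed diagonal. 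Since $A\times A$ and $\Delta\cong A$ are both mod-$2$ acyclic, $E_*$ and $\widetilde C_*(\Delta;\ff_2)$ are exact; feeding this through the first two sequences forces $H_*(\ker\rho)=H_*(\rho E_*)=0$, and then the third sequence gives $\widetilde H_*(C;\ff_2)=0$. (Alternatively one can bypass Smith theory: $C=SP^2(A)$, the $n$-fold symmetric product; the inclusions $SP^n(A)\hookrightarrow SP^{n+1}(A)$ are split injective on integral homology by Dold's computation of the homology of symmetric products, and $\widetilde H_*(SP^\infty(A);\zz)=\pi_*(SP^\infty(A))=\widetilde H_*(A;\zz)=0$ by the Dold–Thom theorem, whence $\widetilde H_*(SP^2(A);\zz)=0$.)

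Combining the two parts, $C$ is simply connected and acyclic, hence weakly contractible by the Hurewicz theorem and contractible by the Whitehead theorem. The step I expect to be the real obstacle is the mod-$2$ homology of the branched quotient $C$: this is genuinely not formal — one cannot replace the strict quotient by a homotopy quotient (for instance $(S^1\wedge S^1)/\Sigma_2\simeq\ast$ while the homotopy orbits are far from a point), and the transfer argument is useless at the prime $2$ — so some Smith-theoretic input (or, equivalently, the fine structure of symmetric products) is needed. The remaining ingredients — the $\pi_1$ computation, the transfer, and the passage to a regular cellular structure — are routine.
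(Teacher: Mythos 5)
Your argument is correct, but the homological half takes a genuinely different route from the paper. For $\pi_1$ you and the paper are doing essentially the same computation in different packaging: the paper identifies $\pi_1(C)$ with the quotient of the wreath product $\pi_1(A)\wreath\langle\tau\rangle$ by the normal closure of $\tau$ and shows, using perfectness of $\pi_1(A)$, that conjugates of $\tau$ generate everything; you instead prove surjectivity of $p_*$ from $\pi_1(A\times A)$ by edge-path lifting (valid, given your regular subdivision and the fact that the basepoint has a one-point fibre) and observe that the relation $p_*=p_*\circ\sigma_*$ forces the image to be a quotient of $(G\times G)$ by the normal closure of the elements $(\gamma,\gamma^{-1})$, which is $G^{\mathrm{ab}}=0$ — again perfectness is the engine. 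For acyclicity, however, the paper's argument is shorter and entirely integral: since $A$ is acyclic, $H_*(A\times A,\Delta)=0$, and the relative cellular chain complex $C_*(A\times A,\Delta)$ is an exact complex of \emph{free} $\zz\langle\tau\rangle$-modules (the cells off the fixed diagonal are freely permuted), hence split exact, hence stays exact after $\otimes_{\zz\langle\tau\rangle}\zz$; this tensor product is $C_*(C,\Delta)$, so $H_*(C,\Delta)=0$ and $C$ is acyclic in one stroke. Your route — the transfer to show $\widetilde H_*(C;\zz)$ is $2$-torsion, then the mod-$2$ Smith sequences (or, alternatively, Dold's splitting for symmetric products plus Dold–Thom) — is also correct, and your three exact sequences are exactly the standard Smith-theory bookkeeping, but your closing claim that some Smith-theoretic or symmetric-product input is unavoidable is contradicted by the paper's split-exactness trick, which is more elementary and avoids the reduction mod $2$ altogether. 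What your approach buys is a statement that visibly generalizes (it is the usual proof that the orbit space of a mod-$2$ acyclic involution with mod-$2$ acyclic fixed set is mod-$2$ acyclic, and the symmetric-product variant handles $A^p/C_p$ for all $p$ at once, as in the paper's remark); what the paper's buys is brevity and a purely integral, coefficient-free argument.
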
 

\begin{proof} It suffices to show that the fundamental group
$\pi_1(C)$ is trivial and that $C$ is acyclic.  Since $\tau$ 
fixes points on the diagonal of $A\times A$, $\pi_1(C)$ is isomorphic 
to the quotient of the wreath product 
$\pi_1(A)\wreath \langle \tau\rangle$ by the normal
subgroup generated by $\tau$.  Thus we need to show that this
wreath product is generated by conjugates of $\tau$.  Recall that 
elements of the wreath product may be written in the form
$(\alpha,\beta,\tau^\epsilon)$, where $\alpha,\beta\in \pi_1(A)$ and  
$\epsilon = 0$ or $1$.  The product of two such elements is given by
\[(\alpha,\beta,1)(\gamma,\delta,\tau^\epsilon)=
(\alpha\gamma,\beta\delta,\tau^\epsilon), \qquad 
(\alpha,\beta,\tau)(\gamma,\delta,\tau^\epsilon)=
(\alpha\delta,\beta\gamma, \tau^{1-\epsilon}).\]
The inverse of $(\alpha,1,1)$ is $(\alpha^{-1},1,1)$, and
so for any $\alpha$, 
\[(\alpha,\alpha^{-1},\tau)= (\alpha,1,1)(1,1,\tau)(\alpha^{-1},1,1)\]
is a conjugate of $\tau$, and $(\alpha,\alpha^{-1},1)$ is a product of
conjugates of $\tau$.  In particular, for any $\beta$ and $\gamma$, 
\[(\beta^{-1}\gamma^{-1}\beta\gamma,1,1)= 
(\beta^{-1},\beta,1)(\gamma^{-1}\beta,\beta^{-1}\gamma,1) 
(\gamma,\gamma^{-1},1)\]
lies in the normal subgroup generated by $\tau$.  But since $\pi_1(A)$
is perfect, an arbitary element of $\pi_1(A)$ may be expressed as a
product of such commutators.  Hence for any $\alpha$, $(\alpha,1,1)$ and 
$(1,\alpha,1)=(1,1,\tau)(\alpha,1,1)(1,1,\tau)$ lie in the normal
subgroup generated by $\tau$, which is therefore the whole of
$\pi_1(A)\wreath \langle \tau\rangle$.  

Now let $\Delta$ denote the diagonal copy of $A$ within $A\times A$.  
Note also that $\Delta$ is equal to the fixed point set for the action
of $\tau$ on $A\times A$.  
Since $A$ is acyclic, the inclusion of $\Delta$ in $A\times A$ induces
a homology isomorphism.  Hence the relative homology groups 
$H_*(A\times A,\Delta)$ are all trivial.  It follows that the relative
cellular chain complex $C_*(A\times A,\Delta)$ is an exact sequence of 
free $\zz\langle \tau\rangle$-modules, and hence is split.  Thus this
sequence remains exact upon tensoring over $\zz\langle \tau\rangle$
with $\zz$ (with $\tau$ acting trivially).  Now there is a natural
isomorphism 
\[C_*(C,\Delta)= C_*((A\times A)/\langle \tau\rangle,\Delta)\cong 
C_*(A\times A,\Delta)\otimes_{\zz\langle \tau\rangle}\zz,\]
and hence the relative homology groups $H_*(C,\Delta)$ are trivial.  
From this it follows that the inclusion of $\Delta$ in $C$ is a homology
isomorphism, and hence that $C$ is acyclic, as required.  
\end{proof} 

\begin{remark} 
For any prime $p$, a similar argument may be used to show that 
$A^p/C_p$ is contractible, where $C_p$ acts on $A^p$ by freely
permuting the factors.  
\end{remark} 

The only problem with the above construction is that
if $A$ is an acyclic locally CAT(0) cubical complex,  
the diagonal copy of $A$ inside $A\times A$ is not a 
subcomplex.  Nevertheless, the barycentric subdivision 
of the diagonal copy of $A$ is a subcomplex of the 
barycentric subdivision of $A\times A$.  

Proposition~\ref{prop:acyclem} can be used to prove a weaker version of 
Theorem~A, in which $T_X$ is not a locally CAT(0) cubical 
complex, but instead a locally CAT(0) polyhedral complex.  
In the non-metric context, Proposition~\ref{prop:acyclem} 
is also useful.  It can be used to give an alternative 
proof of the main theorem of~\cite{LearyNucinkis}, starting 
from a proof of the Kan-Thurston theorem along the lines 
of that of Baumslag-Dyer-Heller~\cite{BDH}.

\begin{theorem}\label{thm:aaprime}
There is a pair $(A',A)$ of locally CAT(0) cubical complexes 
equipped with an involution $\tau$ having the properties listed at the
start of this section.  Furthermore $A$ is the fixed point set for 
the action of $\tau$.  The complex $A$ is 2-dimensional and $A'$ is 
3-dimensional.  
\end{theorem}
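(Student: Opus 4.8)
The plan is to build $A$ and $A'$ from the complexes $Y$ produced in Proposition~\ref{prop:acyctwo}, using a construction modelled on Proposition~\ref{prop:acyclem} but carried out cubically so that all the subcomplexes and fixed sets are honest subcomplexes rather than merely subcomplexes of a subdivision. First I would take $A$ to be one of the finite locally CAT(0) acyclic $2$-complexes constructed in Section~\ref{sec:acyc} --- concretely, the complex $Y$ of Proposition~\ref{prop:acyctwo} with the $\zz/4$-action forgotten, so that $A$ is acyclic, non-contractible, $2$-dimensional, and has perfect (hence torsion-free, since it is aspherical) fundamental group. The non-contractibility is what forces $A$ into the construction at all: as in the Remark after the $Y(n)$ discussion, the petals map to closed geodesics, so $\pi_1(A)\ne1$.

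Next I would define $A'$ so that it relates to $A$ the way $C=(A\times A)/\langle\tau\rangle$ relates to the diagonal in Proposition~\ref{prop:acyclem}, but replacing the bad diagonal embedding by a genuine cubical one. The cleanest route is: take the complex $Y$ of Proposition~\ref{prop:acyctwo} \emph{with} its involution $\tau$, whose fixed set is a single point and whose quotient $Y/\tau$ is contractible, and iterate/combine this with the product construction so as to produce a $3$-dimensional $A'$ containing a copy of $A$ as its $\tau$-fixed subcomplex. In more detail: one attaches to $A$, for each cube (or each of the large polygonal $2$-cells), a copy of a $3$-dimensional ``doubling'' gadget $Z$ equipped with an involution whose fixed set is exactly the corresponding face of $A$ and whose quotient is a cone on that face; the gadget $Z$ is itself built by the cubical polygon technology of Section~\ref{sec:polygons}, so that links remain flag and Gromov's criterion (Theorem~\ref{gromovthm}) applies. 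One then checks: (i) $A\hookrightarrow A'$ is totally geodesic --- this follows because links of $A$-vertices sit $\pi$-convexly inside links of $A'$-vertices, exactly as in the two Remarks in Section~\ref{sec:acyc} where subcomplexes were shown totally geodesic by the ``separated by at least $\pi$'' criterion; (ii) $A'$ is acyclic --- this is the chain-level argument of Proposition~\ref{prop:acyclem}: the relative chain complex $C_*(A',A)$ is an exact complex of free $\zz\langle\tau\rangle$-modules, hence split, hence stays exact after $-\otimes_{\zz\langle\tau\rangle}\zz$, and $C_*(A'/\tau,A)\cong C_*(A',A)\otimes_{\zz\langle\tau\rangle}\zz$; (iii) $A'/\tau$ is contractible --- combine the acyclicity just obtained with the $\pi_1$-computation, which again runs exactly as in Proposition~\ref{prop:acyclem}: $\pi_1(A'/\tau)$ is the quotient of the relevant wreath-type product by the normal closure of $\tau$, and perfectness of $\pi_1(A)$ makes that quotient trivial; (iv) $A$ is \emph{exactly} the fixed set of $\tau$ on $A'$, not just contained in it, which one arranges by making each doubling gadget $Z$ have fixed set precisely its ``diagonal'' face.

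The dimension bookkeeping is then routine: $A$ is $2$-dimensional by construction, and the doubling gadgets $Z$ are $3$-dimensional (a square-times-interval type object, suitably cubulated so that links are flag), so $A'$ is $3$-dimensional; local CAT(0)-ness of both $A$ and $A'$ is verified by the same link-checking done throughout Section~\ref{sec:acyc}, now for the finitely many new vertex-link types introduced by the gadgets.

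The main obstacle I expect is item (iv) together with keeping everything \emph{cubical} (not merely a polyhedral or subdivided complex): Proposition~\ref{prop:acyclem} in its raw form gives a polyhedral $C$ with the diagonal only a subcomplex after barycentric subdivision, and the Remark after Theorem~\ref{thm:aaprime}'s motivation explicitly flags this. So the real work is to design the $3$-dimensional gadget $Z$ --- essentially a cubulated mapping cylinder of the fold $Y\times Y \to (Y\times Y)/\tau$ restricted near the diagonal --- with three simultaneous properties: its links are flag (so Gromov's criterion gives local CAT(0)), its $\tau$-fixed subcomplex is a prescribed $2$-complex equal to a copy of $A$, and its quotient by $\tau$ collapses to a cone. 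Balancing the flag condition against the demand that the fixed set be a full-dimensional subcomplex (rather than something of lower dimension, which is what a naive ``double along a subcomplex'' would give) is where the polygon-tessellation machinery of Section~\ref{sec:polygons} has to be invoked with some care, choosing side lengths large enough that all the new link loops have length at least $4$.
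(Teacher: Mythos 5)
Your proposal leaves the essential object unconstructed and, worse, its stated specification cannot work. You take $A=Y$ and propose to build $A'$ by attaching, to each $2$-cell of $A$, a $3$-dimensional ``doubling gadget'' $Z$ with an involution whose fixed set is exactly that face and whose quotient is a cone on that face, with $\tau$ acting trivially on $A$ itself. You rightly flag the construction of $Z$ as ``the real work,'' so as written this is a plan rather than a proof; but even granting such gadgets, property (iii) fails. With that specification, $A'/\langle\tau\rangle$ is $A$ with a cone attached along each closed $2$-cell; since a closed $2$-cell is contractible, attaching a cone along it does not change the homotopy type, so $A'/\langle\tau\rangle\simeq A$, which has nontrivial (indeed perfect, nontrivial) fundamental group and is not contractible. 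The wreath-product argument of Proposition~\ref{prop:acyclem} does not transfer to this situation: it computes $\pi_1((A\times A)/\langle\tau\rangle)$ for an involution swapping two \emph{global} factors, whereas in your set-up van Kampen expresses $\pi_1(A'/\langle\tau\rangle)$ as an amalgam of $\pi_1(A)$ with the simply connected gadget quotients along simply connected faces, which leaves $\pi_1(A)$ untouched. Killing $\pi_1(A)$ in the quotient is a global phenomenon and cannot be achieved cell-by-cell; some piece of $A'/\langle\tau\rangle$ must cone off the generating loops of $A$.

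There is also a circularity in your item (ii): the splitting argument of Proposition~\ref{prop:acyclem} shows that the quotient is acyclic \emph{given} that $H_*(A',A)=0$, i.e.\ given that $A'$ is already known to be acyclic (so that $C_*(A',A)$ is exact); it cannot be used to prove acyclicity of $A'$, which would have to come from properties of the unspecified gadgets. For comparison, the paper's construction is global in exactly the way your local gadgets are not: it forms the mapping torus of the involution $\tau'$ on $Y$ from Proposition~\ref{prop:acyctwo}, makes one further vertex identification, and glues on a second copy of $Y$ along a totally geodesic $2$-petalled rose. The fixed set is then the seam copy of $Y$ (giving $A\cong Y$, $2$-dimensional, with $A'$ $3$-dimensional), acyclicity of $A'$ follows from Mayer--Vietoris, and contractibility of $A'/\langle\tau\rangle$ comes from the fact that the quotient of the mapping-torus piece is the mapping cylinder of $Y\rightarrow Y/\tau'$, which retracts to the contractible $Y/\tau'$, together with van Kampen; it is precisely this nontrivial action of $\tau'$ on the bulk of $A'$ that kills $\pi_1$ of the fixed copy in the quotient, and your construction has no analogue of it.
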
 

\begin{proof} 
Let $Y$ be the 2-complex constructed in
Proposition~\ref{prop:acyctwo}, and let $\tau'$ denote the involution
of $Y$ which is denoted `$\tau$' in Proposition~\ref{prop:acyctwo}.  
Let $X_0$ be the direct product $Y\times [-4,4]$ of $Y$ with an 
interval of length 8, and define an involution $\tau_0$ on $X_0$ by 
$\tau_0(y,t)= (\tau'(y),-t)$.  By definition, $X_0$ can be viewed as being 
built from unit cubes.  Define $X_1$ to be the mapping torus of
$\tau':Y\rightarrow Y$, i.e., the quotient of $X_0$ obtained by 
identifying $(y,4)$ and $(\tau'(y),-4)$ for all $y\in Y$.  Since 
$\tau'$ is an involution, it follows readily that $\tau_0$ defines 
an involution $\tau_1$ on $X_1$.  Let $y_0$ denote the unique 0-cell 
in $Y$, and define $X_2$ by identifying the images of the points 
$(y_0,0)$ and $(y_0,4)$ in $X_1$.  As before, $\tau_1$ passes to an 
involution $\tau_2$ on $X_2$.  Each of $X_0$, $X_1$ and $X_2$ is a 
locally CAT(0) cubical complex, as may be seen by checking that 
all vertex links are flag.  (Every vertex link in $X_0$ or $X_1$ 
is isomorphic either the cone on or the suspension of a vertex 
link in $Y$, and the `new' link in $X_2$ is isomorphic to the 
disjoint union of two links from $X_1$.)  

\begin{figure}
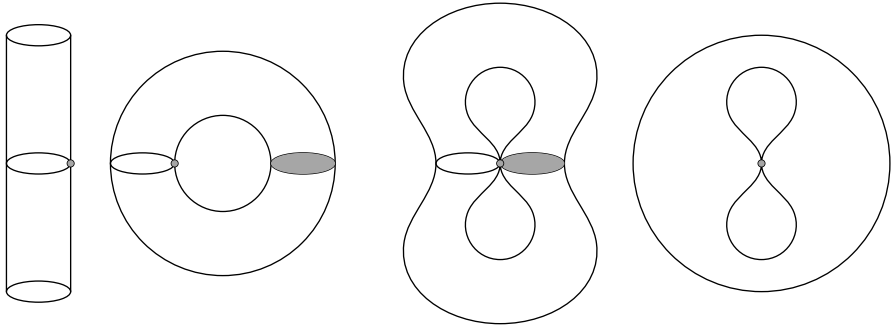

\begin{center}
\includegraphics[scale=1.2]{exx0.mps}
\quad\includegraphics[scale=1.2]{exx1.mps}
\quad\includegraphics[scale=1.2]{exx2.mps}
\quad\includegraphics[scale=1.2]{exx3.mps}
\end{center}
\caption{\label{fig:three}
The spaces $X_0$, $X_1$, $X_2$ and $X_3$, with the fixed point sets
for the involutions shaded in grey.}
\end{figure}

Let $Z_2$ be image in $X_2$ of $\{y_0\}\times [-4,4]$, so that 
$Z_2$ is a 2-petalled rose in which each petal has length 4.  
The subspace $Z_2$ is totally geodesic and invariant under the 
action of $\tau$ (which swaps the two petals).  Let $X_3$ be a 
disjoint copy of $Y$, and let $Z_3$ be 
the subspace of $X_3$ consisting of the 0-cell $y_0$ and the 
two 1-cells labelled $a_1$ and $a_3$.  Then $Z_3$ is also a 
2-petalled rose in which each petal has length 4, $Z_3$ is 
totally geodesic in $Y$ and is invariant under the action 
of the involution $\tau'$, which swaps the two petals.  Let 
$\phi:Z_2\rightarrow Z_3$ be an isometry which is equivariant 
for the two given involutions, and define $A'$ by taking the 
disjoint union of $X_2$ and $X_3$ and identifying $z$ with 
$\phi(z)$ for all $z\in Z_2$.  Since $A'$ was constructed from 
two locally CAT(0) cubical complexes by identifying isometric 
totally geodesic subcomplexes, it follows that $A'$ is itself 
a locally CAT(0) cubical complex.  (Alternatively, one may 
check the structure of vertex links in $A'$.)  Since $\phi$ 
has the property that $\phi\tau_2(z)= \tau'\phi(z)$ for all 
$z\in Z_2$, one may define an involution $\tau$ on $A'$ by 
$\tau(a)= \tau_2(a)$ for $a\in X_2$ and $\tau(a)=\tau'(a)$ 
if $a\in X_3$.  Let $A$ be the fixed point set for $\tau$.  
It is readily seen that $A$ is equal to the image of $Y\times\{0\}$
inside $A'$, which is isometric to $Y$ and so is acyclic.  
The spaces $X_0$, $X_1$, $X_2$ and $X_3$ are depicted in 
Figure~\ref{fig:three}.  The fixed point set for the involution on
each $X_i$ is shaded in grey.  

It remains to check that $A'$ is acyclic and $A'/\tau$ is
contractible.  Since $Y$ is acyclic, it follows easily that 
$X_1$ has the same homology as a circle.  
A Mayer-Vietoris argument now shows that the 
inclusion map $Z_2\rightarrow X_2$ is a homology isomorphism.  
Since $X_3$ is acyclic, another Mayer-Vietoris argument 
shows that $A'$ is acyclic.  

Note that $X_1/\tau_1$ is homeomorphic to 
the mapping cylinder of the quotient map $Y\rightarrow Y/\tau'$.  Since 
$Y/\tau'$ is contractible, it follows that $X_1/\tau_1$ is 
contractible.  From this it follows that $X_2/\tau_2$ is homotopy 
equivalent to a circle, and that the inclusion map
$Z_2/\tau_2\rightarrow X_2/\tau_2$ is a homotopy equivalence.  
Since $X_3/\tau'$ is contractible, it follows from van Kampen's 
theorem that $A'/\tau'$ has trivial fundamental group and it 
follows from the Mayer-Vietoris theorem that $A'/\tau'$ is 
acyclic.  Hence $A'/\tau'$ is contractible as claimed.  
\end{proof}

\section{The main result} 
\label{sec:main}

Our proof is close to Maunder's proof of the Kan-Thurston
theorem~\cite{Maunder}.  However, we use a slightly wider class of
spaces than Maunder's class of `ordered simplicial complexes'.  We
start with the category of $\Delta$-complexes, or semi-simplicial
sets~\cite{Hatcher,RourkeSanderson}.  Our construction will apply to
the full subcategory $\calc$ of $\Delta$-complexes such that the
${\binom{n+1}2}$ edges of each $n$-simplex are distinct.

Note that the barycentric subdivision of any $\Delta$-complex has the
stronger property that the $(n+1)$ vertices of each $n$-simplex are
distinct, and so the barycentric subdivision of any $\Delta$-complex
is in $\calc$.  Similarly, the barycentric subdivision $K'$ of any
simplicial complex $K$ is naturally an object of $\calc$, and any map
$f:K\rightarrow L$ of simplicial complexes that is injective on each
simplex induces a map of $\Delta$-complexes from $K'$ to $L'$.

Let $X$ and $Y$ denote $\Delta$-complexes, and let $x$ be a vertex of
$X$.  The link of $x$, denoted $\link_X(x)$, is another
$\Delta$-complex.  Any map of $\Delta$-complexes $f:X\rightarrow Y$
induces a map $f_x:\link_X(x)\rightarrow\link_Y(f(x))$.  (This is one
advantage of $\Delta$-complexes over simplicial complexes.)  The map
$f$ is said to be \emph{locally injective} if for each vertex $x$, 
the map $f_x$ is injective.  It can be shown that a map of
$\Delta$-complexes is locally injective if and only if the induced map
of topological realizations is locally injective.

 For each $X$ in $\calc$, we will construct a locally CAT(0) cubical
complex $T_X$, a map $t_X:T_X\rightarrow X$ and an involution $\tau$
on $T_X$ having the properties stated in Theorem~A.  The
construction will be natural for any map $f:X\rightarrow Y$ in
$\calc$; however the induced map $T_f:T_X\rightarrow T_Y$ will be
locally isometric only in the case when $f$ is locally injective.
The following statement is a summary of what we will prove.  

\begin{theorem}\label{thm:deltaversion}
Let $\cald$ denote the category whose objects are 
locally CAT(0) cubical complexes equipped with a cellular 
isometric involution $\tau$ and whose morphisms are $\tau$-equivariant 
cubical maps.  There is a functor $T$ from $\calc$ to $\cald$ which 
has all of the properties listed in Theorem~A.  
\end{theorem}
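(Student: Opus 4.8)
The plan is to build $T$ by induction over the simplices of $X$, translating the group-theoretic iteration of Baumslag--Dyer--Heller and Maunder recalled in the introduction into the category of locally CAT(0) cubical complexes, with the acyclic complexes $A$ and $A'$ of Theorem~\ref{thm:aaprime} playing the role of the acyclic group $H$ of the classical argument and of its involution-enhanced companion. Every object of $\calc$ is the colimit of its simplices along face maps, so it suffices to produce, functorially and compatibly with faces, for each model simplex a locally CAT(0) cubical complex $T_{\Delta^n}$ with involution $\tau$ and a structure map $T_{\Delta^n}\to\Delta^n$, each of whose face-inclusions $T_{\Delta^{n-1}}\hookrightarrow T_{\Delta^n}$ is a $\tau$-equivariant inclusion of a totally geodesic subcomplex behaving well on vertex links; one then defines $T_X$ as the corresponding colimit and $t_X$ as the colimit of the structure maps. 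Since the colimit is assembled from inclusions of subcomplexes, the gluing lemma for locally CAT(0) cubical complexes along totally geodesic subcomplexes (used already in Theorem~\ref{thm:aaprime}) propagates ``locally CAT(0) cubical'' from the models to $T_X$ for arbitrary, possibly infinite, $X$, via Gromov's criterion in the general case (Theorem~\ref{gromovthm}); functoriality and the union formula are then automatic, and $T_X$ is finite when $X$ is.

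It is convenient to phrase the homological conclusions in terms of \emph{acyclic maps} --- maps inducing an isomorphism on homology with every local coefficient system, equivalently those with acyclic homotopy fibre --- because acyclicity of maps is preserved by the pushouts and colimits that build $T_X$. The first half of the construction produces the space $T_X^\tau$ and the functor realizing property~\ref{conda}, and reduces to this: build $T_{\Delta^n}^\tau$, with its faces included as totally geodesic subcomplexes, so that $T_{\Delta^n}^\tau\to\Delta^n$ is acyclic, i.e. $T_{\Delta^n}^\tau$ is acyclic, and so that $\dim T_{\Delta^n}^\tau=n$. This follows the classical iteration $G_{n+1}=H_{n+1}*_{G_n}H_{n+1}$, $H_{n+1}=H_n*_{G_{n-1}}(G_{n-1}\times H)$: writing $T_{\partial\Delta^n}^\tau$ for the already-built boundary-sphere model (which, inductively, has the homology of $S^{n-1}$, is totally geodesic in the acyclic disc model, and has dimension $n-1$), one attaches to the previous disc model a product of a lower sphere model with $A$, glued along totally geodesic copies of sphere models, and, where the classical recipe conjugates by an element of infinite order, attaches a further copy of the previous disc model through a nontrivial based loop in $A$ --- such loops exist because the petals of the roses used to build $A$ are closed geodesics, hence of infinite order in $\pi_1(A)$. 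Acyclicity of the result (and of its map to the simplex) follows from a Mayer--Vietoris and van Kampen argument formally identical to the classical homology computation and to the proof of Theorem~\ref{thm:aaprime}, and the dimension count $\dim\le\max(n-1,(n-2)+2)=n$ is immediate because $A$ is two-dimensional.

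The involution is installed by attaching to every copy of $A$ occurring in $T_X^\tau$ a copy of the three-dimensional complex $A'$ of Theorem~\ref{thm:aaprime} along its fixed subcomplex $A=(A')^\tau$; the involutions $\tau$ of these copies of $A'$, together with the trivial involution on $T_X^\tau$, assemble to a cellular isometric involution $\tau$ on the enlarged complex $T_X$ whose fixed subcomplex is precisely $T_X^\tau$. The remaining assertions then fall out: $t_X\circ\tau=t_X$ because $t_X$ factors through the map collapsing each attached $A'$ and $\tau$ respects this map; $T_X/\langle\tau\rangle\to X$ is a homotopy equivalence because each $A'/\langle\tau\rangle$ is contractible, so at the level of model simplices $T_{\Delta^n}/\langle\tau\rangle$ is contractible and $T_X/\langle\tau\rangle$ is a colimit of the homotopy equivalences $T_{\Delta^n}/\langle\tau\rangle\to\Delta^n$; the third listed property of Theorem~A is property~\ref{conda} applied to the involution-free functor $X\mapsto T_X^\tau$; $\dim T_X^\tau=\dim X$ throughout since that construction uses only $A$, while $\dim T_X=\max(\dim X,3)$, exceeding $\dim X$ only in the two-dimensional case, because a standalone three-dimensional $A'$ is attached and this raises the dimension only when what it is attached to has dimension below three; and the three functoriality clauses are read off from the induced maps of models, an injective simplicial map giving inclusions of totally geodesic submodels and a locally injective one a local isometry, precisely because the face inclusions of the models were arranged to behave injectively on vertex links.

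The real work, and the main obstacle, lies in two places. First, every gluing must be checked to yield a locally CAT(0) \emph{cubical} complex and not merely a locally CAT(0) polyhedral complex: the diagonal of $A\times A$, and the products and diagonals occurring along the way, are not subcomplexes, so the whole construction must be carried out after passing to cubical subdivisions, and at each step one must verify that the subcomplex being glued along is full and that all vertex links --- including the new links created along the gluing locus, in arbitrarily high dimension --- remain flag, invoking Theorem~\ref{gromovthm}. Second, all homological bookkeeping must be done at the level of acyclic maps rather than of integral homology, since only the former survives the colimits used to pass from the model simplices to a non-simply-connected $X$; this is exactly why the Baumslag--Dyer--Heller/Maunder iteration, rather than a crude mapping cone on $T_{\partial\Delta^n}^\tau$, is forced on us, and checking that each elementary step of that iteration preserves acyclicity of the map to the ambient simplex --- by Mayer--Vietoris and van Kampen, as in the proof of Theorem~\ref{thm:aaprime} --- is the technical heart of the argument. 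Everything else (finiteness, the union formula, naturality, and the reduction of ``homology isomorphism for all local coefficients'' to acyclicity of a map) is then formal.
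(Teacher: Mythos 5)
Your plan is essentially the paper's own proof: a Maunder-style induction over the simplices of a $\Delta$-complex using the pair $(A',A)$ of Theorem~\ref{thm:aaprime}, mapping cylinders and products with $A$, Mayer--Vietoris and van Kampen bookkeeping for acyclicity and for the quotient, Gromov's criterion and fullness of vertex links for the locally CAT(0) and totally geodesic claims, and direct limits for infinite complexes. The only differences are organizational: you build the fixed-point complex first and then attach copies of $A'$ along the distinguished copies of $A$ (which is exactly how the paper describes the construction informally), whereas the formal proof constructs $T_X$ simultaneously with an auxiliary acyclic complex $U_X$ (with $U_X/\tau$ contractible) carrying the involution; note also that no diagonals or extra cubical subdivisions are needed, since every gluing in the actual construction is along a genuine cubical subcomplex, and that no $A'$ gets attached over $1$-simplices, which is what keeps $\dim T_X=\dim X$ for graphs.
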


Theorem~A follows from Theorem~\ref{thm:deltaversion} 
by composing $T$ with the barycentric subdivision functor from the 
category of simplicial complexes and simplicial maps that are
injective on each simplex to the category $\calc$.  

Before beginning the proof of Theorem~\ref{thm:deltaversion}, we 
need two more pieces of
notation.  If $f:T\rightarrow U$ is a locally isometric cellular 
map of cubical complexes, and $n$ is a positive integer, let 
$M(n,f)$ denote the mapping cylinder of $f$ of length $n$, i.e., 
the cubical complex obtained from $T\times [0,n]\coprod U$ by 
identifying, for each $t\in T$, the points $(t,n)$ and $f(t)$.  

Finally, let $(A',A)$ be the pair of spaces constructed in
Theorem~\ref{thm:aaprime}, and let $j$ denote a fixed locally 
isometric closed loop $j:[0,4]\rightarrow A$.  For example, 
$j$ could be the map that goes at constant speed around the 
generator $a_1$ for the fundamental group of $A$.  (Recall 
that $A$ is isometric to the cubical complex $Y$ of
Proposition~\ref{prop:acyctwo}.)

\begin{proof} 
For each finite $\Delta$-complex $X$ in $\calc$, 
we inductively define $T_X$, the map $t_X:T_X\rightarrow X$, 
another locally CAT(0) cubical complex $U_X$, a locally isometric 
cubical map $i_X:T_X\rightarrow U_X$ and cubical involutions $\tau$
on $T_X$, $U_X$, so that all of the following hold.  
\begin{itemize}

\item The map $t_X$ induces integral homology isomorphisms 
$T_X\rightarrow X$ and $T_X^\tau\rightarrow X$ and for any 
vertex $x_0$ of $X$, an isomorphism of fundamental groups
$\pi_1(T_X/\langle\tau\rangle, T_{x_0})\rightarrow \pi_1(X,x_0)$.  

\item $U_X$ is acyclic and $U_X/\tau$ is contractible.  

\item The map $i_X$ is $\tau$-equivariant.  

\item $T_X$ and $U_X$ are finite.  

\item $\dim(T_X)=\dim(X)$, except that if $\dim(X)=2$ then
  $\dim(T_X)=3$.  

\item $\dim(U_X)=\dim(X)+1$ except that $\dim(U_X)=3$ if $\dim(X)=1$, 
and $\dim(U_X)=0$ if $X$ is 0-dimensional.  

\item If $W$ is a subcomplex of $X$, then $T_W$ is a totally geodesic
  subcomplex of $T_X$ and $U_W$ is a totally geodesic subcomplex of
  $U_X$, and these inclusions are $\tau$-equivariant.  

\end{itemize}  

In the case when $X$ is 0-dimensional, define $T_X$ to be $X$, 
define $U_X$ to be a single point, take $i_X:T_X\rightarrow U_X$ 
to be the unique map, and let $\tau$ be the trivial involution.  

In the case when $X$ is 1-dimensional, take $T_X$ to be the second 
barycentric subdivision of $X$, so that for each edge $x$ of $X$, 
$T_X$ is either a loop or a path consisting of 4 edges of length 1.  
Let $(A',A)$ be the pair of spaces constructed in
Theorem~\ref{thm:aaprime}, and for $U_X$ take the 1-point union of 
copies of the space $A'$ indexed by the 1-cells of $X$, 
with the involution $\tau$ as defined 
in Theorem~\ref{thm:aaprime}.  The map $i_X:T_X\rightarrow U_X$ is defined
by taking a copy of the map $j:[0,4]\rightarrow A\subseteq A'$ for 
each 1-cell.  

Now suppose that $T_X$, $U_X$, $\tau$ and $i_X$ have been defined
and have the above listed properties 
whenever $X\in \calc$ is finite and consists of at most $N-1$ 
simplices of various dimensions.  
Suppose now that $X$ is obtained from $W$ by adding a single 
$n$-simplex $\sigma$, for some $n\geq 2$.  
Let $\partial\sigma$ denote the subcomplex
of $W$ consisting of all the simplices contained in the boundary 
of $\sigma$.  To simplify notation slightly, let $U'$ denote
$U_{\partial\sigma}$, let $T'$ denote the $\tau$-fixed point set 
$T_{\partial\sigma}^\tau$, and let $i':T'\rightarrow U'$ denote the 
restriction to $T'$ of the map $i_{\partial\sigma}$.  Now define 
\[T_X = T_{W} \cup_{T'} M(4,i':T'\rightarrow U'),\]
\[U_X = U_W \cup_{T'} (T'\times A)\cup_{A} A'.\] 
More precisely, let $x_0$ be the initial vertex of $\sigma$, and let 
$w_0= T_{x_0}$ be the corresponding vertex of $T_\sigma$.  Note that 
$w_0$ is fixed by the involution $\tau$ on $T_\sigma$.  Also let
$a_0\in A$ be the fixed point for the involution of $A$ described in 
Proposition~\ref{prop:acyctwo}.  Now the space $U_X$ is obtained 
from the disjoint union $U_X = U_W \coprod (T'\times A)\coprod A'$ 
by identifying $T'\times\{a_0\}\subseteq T'\times A$ with $T'\subseteq
U_W$ and identifying $\{w_0\}\times A$ with $A\subseteq A'$.  

The involution $\tau$ on each of $T_X$ and $U_X$ is defined using the 
given involutions on $T_W$, $U_W$, $T'$, $U'$ and $A'$.  The map 
$i_X:T_X\rightarrow U_X$ is defined to equal $i_W$ on $T_W$.   
On $U'\subseteq M(4,i')$ the map 
is equal to $i_{\partial\sigma}$, and on the rest of the mapping
cylinder $M(4,i')$ it is induced by the map $(s,t)\mapsto (t,j(s))$ 
from $[0,4]\times T'$ to $T'\times A$.  
The map $t_X$ is defined by $t_W$ on $T_W\subseteq T_X$, by taking 
$U' \subseteq M(4,i')$ to the barycentre of $\sigma$, and on 
the rest of the mapping cylinder $M(4,i)$ by linear interpolation 
between the map $t_W|_{T'}$ and the constant map that sends $T'$ to 
the barycentre of $\sigma$.  

It follows easily that the maps $T_W\rightarrow T_X$ and
$U_W\rightarrow U_X$ are isometric embeddings of totally geodesic
subcomplexes.  The quickest way to see this is by induction, 
using a gluing lemma, but
one could also show directly that each vertex link in $T_W$
(resp.\ $U_W$) is a full subcomplex of the corresponding vertex link in
$T_X$ (resp.\ $U_X$).  Similarly, an induction shows that the map 
$i_X:T_X\rightarrow U_X$ is a $\tau$-equivariant locally isometric
cubical map.  

It is easily checked that the map $t_X$ induces isomorphisms from 
each of $H_*(T_X,T_W)$, $H_*(T_X^\tau,T_W^\tau)$ and
$H_*(T_X/\tau,T_W/\tau)$ to $H_*(X,W)$.  
By the 5-lemma and induction it follows that $t_X$ induces
isomorphisms from $H_*(T_X)$, $H_*(T_X^\tau)$ and $H_*(T_X/\tau)$ to
$H_*(X)$.   

By induction, the fundamental group of $U'/\tau$ is trivial, and $t_W$
induces an isomorphism $\pi_1(T_W/\tau,T_{w_0})\rightarrow
\pi_1(W,w_0)$, for any vertex $w_0\in W$.  By Van Kampen's theorem, 
it follows that $t_W$ induces an isomorphism from 
$\pi_1(T_X/\tau,T_{x_0})$ to $ \pi_1(X,x_0)$ as claimed.  

The Mayer-Vietoris theorem shows that $H_*(U_X,U_W)=\{0\}$,
and so by induction $U_X$ is acyclic.  Similarly, Van Kampen's theorem
and an induction shows that $U_X$ is simply-connected.  It follows
that $U_X$ is contractible as required.  
This completes the proof that the construction applies to any finite
$X$ and has the properties listed at the start of the proof in this 
case.  

For the general case, define $T_X$, $t_X$ and $\tau$  as direct 
limits over the finite subcomplexes of $X$.  (We shall not need $U_X$ or
$i_X$ in the case when $X$ is infinite, but these could also be 
defined in this way for arbitrary $X$.)  It is immediate from the
definition that in the general case, $t_X$ induces isomorphisms 
from $H_*(T_X)$, $H_*(T_X^\tau)$ and $H_*(T_X/\tau)$ to $H_*(X)$ and 
an isomorphism, for any $x_0$, from $\pi_1(T_X,T_{x_0})$ to
$\pi_1(X,x_0)$.  

To establish the metric properties of $T_X$ in the general case, it is
easiest to use Gromov's criterion (Theorem~\ref{gromovthm}).  If $t$ is
a vertex of $T_X$, then the link of $t$, $\link_{T_X}(t)$, is the
limit of the $\link_{T_Y}(t)$, where $Y$ ranges over the finite
subcomplexes of $X$ such that $T_Y$ contains $t$.  Since each such
$\link_{T_Y}(t)$ is a flag complex, it follows that $\link_{T_X}(t)$
is a flag complex.  Hence by Theorem~\ref{gromovthm}, $T_X$ is locally
CAT(0).  Now suppose that $W$ is a subcomplex of $X$, let $t$ be a
vertex of $T_W$, and let $e_0,\ldots,e_n$ be vertices of
$\link_{T_W}(t)$.  Let $U$ be any finite subcomplex of $W$ such that
$t$ is a vertex of $T_U$ and such that each of $e_0,\ldots,e_n$ is a
vertex of $\link_{T_U}(t)$.  If $Y$ is any finite subcomplex of $X$
that contains $U$, then $\link_{T_U}(t)$ is a full subcomplex of
$\link_{T_Y}(t)$.  It follows that every simplex of $\link_{T_X}(t)$
whose vertex set is contained in $\{e_0,\ldots,e_n\}$ is already a
simplex in $\link_{T_U}(t)$.  Hence $\link_{T_W}(t)$ is a full
subcomplex of $\link_{T_X}(t)$, and so the inclusion $T_W\rightarrow
T_X$ is totally geodesic.  This completes the construction of $T_X$
and $t_X$ having all the properties listed at the start of the proof.

To complete the proof of Theorem~\ref{thm:deltaversion}, it remains to
establish that $T_X$, $t_X$ and $\tau$ have the rest of the properties 
listed in the statement of Theorem~A{}.  We first
consider the three listed naturality properties.  The fact that 
any map of $\Delta$-complexes $f:X\rightarrow Y$ induces a map of
cubical complexes $T_f:T_X\rightarrow T_Y$ is clear from the
construction.  The fact that $T_f$ embeds $T_X$ isometrically as a 
totally geodesic subcomplex in the case when $f$ is injective 
follows immediately from the case when $f$ is the inclusion of a 
subcomplex, which was proved during the construction of $T_X$.  
The fact that $T_f$ is locally isometric when $f$ is locally injective 
now follows from Theorem~\ref{isom}, since if $f$ is locally injective 
then for any vertex $t$ of $T_X$, the induced map from
$\link_{T_X}(t)$ to $\link_{T_Y}(f(t))$ is the inclusion of a full
subcomplex in a flag complex.  

In the case when $X$ is simply-connected, properties 1--3 listed in
the statement of Theorem~A follow from the properties
that we have already established.  In the case when $X$ is connected, 
but not simply connected, let $\wtX$ be the universal cover of $X$, 
and let $G$ be the fundamental group of $X$.  By naturality of the 
construction, $G$ acts by deck transformations on $T_{\wtX}$.  
The covering map $\wtX\rightarrow X$ induces a map 
$T_{\wtX}\rightarrow T_X$, which commutes with the action of $G$ on 
$T_{\wtX}$.  It follows that there is a $G$-equivariant isomorphism 
from $T_{\wtX}$ to the regular cover of $T_X$ with fundamental group 
equal to the kernel of the map from $\pi_1(T_X)$ to $G$.  Note also 
that the map $T_{\wtX}\rightarrow \wtX$ induces a $G$-equivariant 
homotopy equivalence $T_{\wtX}/\tau\rightarrow \wtX$.  It follows that
in the case when $X$ is connected, the map $T_X/\tau\rightarrow X$ 
is a homotopy
equivalence.  For each $n\geq 0$, let $\wtX^n$ denote the $n$-skeleton
of the universal cover of $X$, and let $\wtX^{-1}$ denote the empty
space.  By what we have proved already, for each $n\geq -1$ the map 
induced by $t_X$ from $H_*(T_{\wtX^{n+1}},T_{\wtX^n})$
to $H_*(X^{n+1},X^n)$ is a $G$-equivariant isomorphism.  It follows by 
induction that for any local coefficient system $M$ on $X$ and for 
any $n\geq 0$, the map $t$ induces an isomorphism 
$H_*(T_{X^n};M)\rightarrow H_*(X^n;M)$.  This establishes property~1
in the case when $X$ is connected and finite-dimensional.  The case 
when $X$ is connected follows from this case, since the inclusion 
of $X^n$ into $X$ induces an isomorphism on $H_i(-;M)$ for any $i<n$.  

The proof of property~3 in the case when $X$ is connected is very 
similar to the proof of property~1 in the connected case.  Properties 
1--3 in the case of general $X$ follow immediately from the connected
case.  
\end{proof} 

\section{Some other properties of $T_X$} 
\label{sec:sev}

\begin{theorem} 
For any odd prime $p$, there is a cubical complex 
$T_{p,X}$ which has similar properties to those described in the 
statement of Theorem~A, 
except that the involution $\tau$ of $T_X$ is replaced by an order 
$p$ cellular isometry $\tau_p$ of $T_{p,X}$.  
\end{theorem}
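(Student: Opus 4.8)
The plan is to re-run the constructions of Sections~\ref{sec:conacyc} and~\ref{sec:main} verbatim, with the cyclic group $\zz/p$ in place of $\langle\tau\rangle\cong\zz/2$. Inspection of Section~\ref{sec:main} shows that the order of $\tau$ is never used: the inductive definition of $T_X$, $U_X$, $i_X$, $t_X$ and the homology and fundamental-group computations (five-lemma, Mayer--Vietoris, van Kampen, and Gromov's criterion in the infinite case) take as input only a pair $(A',A)$ of locally CAT(0) cubical complexes together with a cellular isometry $\tau_p$ of $A'$ such that $A$ equals the fixed point set of $\tau_p$, both $A$ and $A'$ are acyclic, $A$ is non-contractible and a totally geodesic subcomplex of $A'$, and $A'/\langle\tau_p\rangle$ is contractible. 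Granting such a pair with $\tau_p$ of order $p$, and with $A'$ three-dimensional and $A$ two-dimensional exactly as in Theorem~\ref{thm:aaprime}, the same bookkeeping yields $T_{p,X}$, a map $t_{p,X}\colon T_{p,X}\to X$, and an order-$p$ cellular isometry $\tau_p$ having the analogues of properties~(1)--(3) of Theorem~A, the same dimension bounds, and the same naturality; and since $\zz/p$ is finite, Theorem~\ref{ebarg} still applies, so the equivariant corollaries survive, with point stabilisers of order $1$ and $p$. So it suffices to produce the pair $(A',A,\tau_p)$: a $\zz/p$-analogue of Theorem~\ref{thm:aaprime}.

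First I would establish a $\zz/p$-analogue of Proposition~\ref{prop:acyctwo}: a finite, non-contractible, locally CAT(0), acyclic, two-dimensional cubical complex $Y_p$ carrying a cubical isometry $\rho$ of order $p$ whose fixed point set is a single point and such that $Y_p/\langle\rho\rangle$ is contractible. This is routine given Section~\ref{sec:acyc}: take $n=2p$ with indexing set $\zz/(2p)$, set $A_i=a_ia_{i+2}a_i^{-2}a_{i+2}^{-1}a_i$ and $B_i=b_ib_{i+2}b_i^{-2}b_{i+2}^{-1}b_i$, and attach $4p$ right-angled $(4p)$-gons along the words $a_iA_iB_iA_{i+1}B_i\cdots A_{i-1}B_i$ and $b_iB_iA_i^{-1}B_iA_{i+1}^{-1}\cdots A_{i-1}^{-1}$ for $i\in\zz/(2p)$. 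The polygons can be built from unit squares (all side lengths roughly equal and $2p\ge 5$); the index-shift action of $\zz/(2p)$ freely permutes the $4p$ petals and the $4p$ polygons and fixes the central vertex; each relator abelianises to a single generator, so $Y_p$ is acyclic; the link computation of Section~\ref{sec:acyc} applies word-for-word, so $Y_p$ is locally CAT(0); the petals remain closed geodesics, so $\pi_1 Y_p\ne 1$; and modulo the subgroup $\langle 2\rangle\cong\zz/p$ of $\zz/(2p)$ every attaching word reduces to a single letter, so the quotient is a wedge of $2$-discs, hence contractible. Let $\rho$ be the action of $2\in\zz/(2p)$. The sub-rose $Z\subseteq Y_p$ on the $p$ petals $\{a_0,a_2,\dots,a_{2p-2}\}$ is $\rho$-invariant and totally geodesic (its link points at each vertex lie among those of the totally geodesic $a$-subcomplex, hence are pairwise at distance $\ge\pi$), with $\rho$ cyclically permuting its petals.

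Second, with $Y_p$ and $\rho$ in hand, I would mimic the proof of Theorem~\ref{thm:aaprime}, the one place where the order $2$ of $\tau$ was used essentially (there one reflects an interval, and for $p>2$ the group $\zz/p$ does not act on an interval with a fixed point). Replace the interval $[-4,4]$ by a cubical model $I_p$ of the cone on $p$ points --- a ``$p$-star'' with edges of length $4$, central vertex $c$, leaves $e_1,\dots,e_p$ --- on which $\zz/p$ acts by cyclically rotating the edges and fixing $c$. Put $X_0=Y_p\times I_p$ with the order-$p$ isometry $\tau_0=\rho\times(\text{rotation})$, whose fixed set is the single point $(y_0,c)$; this is a locally CAT(0) cubical complex. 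Form $X_1$ by the ``cyclic mapping torus'' identifications $(y,e_k)\sim(\rho(y),e_{k+1})$ for all $k\in\zz/p$ and $y\in Y_p$; since $\rho^p=\mathrm{id}$ these are consistent, $\tau_0$ descends to an order-$p$ isometry $\tau_1$ of the cubical complex $X_1$, and the image of $Y_p\times\{e_1\}$ --- a copy of $Y_p$ --- becomes $\tau_1$-fixed, since $\tau_1(y,e_1)=(\rho(y),e_2)\sim(y,e_1)$. All new vertex links are cones over, suspensions of, or joins of the form $\link_{Y_p}(y_0)\ast(p\text{ points})$ of links of $Y_p$, hence flag; and collapsing each $Y_p$-slice exhibits $X_1$ as homology-equivalent to a $p$-edge theta graph, so $H_1(X_1)\cong\zz^{p-1}$ and $H_{\ge 2}(X_1)=0$. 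Identify the two components of the fixed set by gluing $(y_0,c)$ to $(y_0,e_1)$, obtaining $X_2$; then the $\tau_2$-fixed set is a single copy $E\cong Y_p$, the images of the $p$ paths $\{y_0\}\times[c,e_k]$ form a $p$-petalled totally geodesic rose $Z_2'\subseteq X_2$ whose petal classes are a basis of $H_1(X_2)\cong\zz^p$ (with $H_{\ge 2}(X_2)=0$), $\tau_2$ cyclically permutes these $p$ petals, and exactly as in Theorem~\ref{thm:aaprime} one gets $X_2/\langle\tau_2\rangle\simeq S^1$ with $Z_2'/\langle\tau_2\rangle\hookrightarrow X_2/\langle\tau_2\rangle$ a homotopy equivalence. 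Now let $X_3$ be a copy of $Y_p$ carrying the isometry $\rho$, and let $A'$ be obtained from $X_2\amalg X_3$ by gluing $Z\subseteq X_3$ to $Z_2'\subseteq X_2$ along a $(\rho,\tau_2)$-equivariant isometry of $p$-roses; since this glues isometric totally geodesic subcomplexes, $A'$ is a three-dimensional locally CAT(0) cubical complex. Define $\tau_p$ to be $\tau_2$ on $X_2$ and $\rho$ on $X_3$. Then $Z_2'\to X_2$ is a homology isomorphism and $X_3$ is acyclic, so Mayer--Vietoris gives $A'$ acyclic; $\tau_p$ has fixed set $E\cup\{\text{centre of }Z\}=E\cong Y_p=:A$, totally geodesic in $A'$; and $A'/\langle\tau_p\rangle=(X_2/\langle\tau_2\rangle)\cup_{Z_2'/\langle\tau_2\rangle}(X_3/\langle\rho\rangle)$ is a space $\simeq S^1$ with its generating circle filled in by the contractible $X_3/\langle\rho\rangle=Y_p/\langle\rho\rangle$, so van Kampen and Mayer--Vietoris show $A'/\langle\tau_p\rangle$ is simply connected and acyclic, hence contractible. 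Thus $(A',A,\tau_p)$ has all the required properties; for $p=2$ it reproduces the construction of Theorem~\ref{thm:aaprime}.

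The step I expect to cost the most work is this $\zz/p$-analogue of Theorem~\ref{thm:aaprime}: verifying the flag-link condition at the vertices created by the gluings defining $X_1$, $X_2$ and $A'$ (using the gluing lemma or direct inspection, the local model being $\link_{Y_p}\ast(p\text{ points})$ at leaf-merge points and a disjoint union of two such at the hub of $Z_2'$), checking total geodesicity of $Z$, $Z_2'$ and $E$, and pinning down the $\zz/p$-module structure of $H_1(X_2)$ precisely enough to see simultaneously that $Z_2'\to X_2$ is a homology isomorphism and that $X_2/\langle\tau_2\rangle$ is a circle carried by $Z_2'/\langle\tau_2\rangle$. The remaining downstream work in Sections~\ref{sec:sev}--\ref{sec:main} is then purely formal, since nothing there uses the order of the symmetry.
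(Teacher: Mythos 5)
Your proposal is correct and follows essentially the same route as the paper: the paper likewise reduces everything to a $\zz/p$-analogue of Theorem~\ref{thm:aaprime} (hence of Proposition~\ref{prop:acyctwo}) and then reruns the proof of Theorem~\ref{thm:deltaversion} unchanged, and your ``cyclic mapping torus'' of $Y_p\times(p\hbox{-star})$ is isomorphic as a $\zz/p$-space to the paper's $X_{1,p}$, which it presents instead as the double mapping cylinder of the two equivariant covering maps $Y''_p\rightarrow Y'_p$ and $Y''_p\rightarrow Y_p$. The only real divergence is the choice of $Y_p$ itself: the paper takes the pentagon complexes $Y(n)$ of Section~\ref{sec:acyc} with $n\in\{p,2p,3p\}$ chosen so that $n\geq 8$ and the quotient $Y(n)/(\zz/p)\cong Y(n/p)$ is contractible, thereby inheriting the link, acyclicity and quotient properties already established there, whereas you generalize the $4p$-gon construction of Proposition~\ref{prop:acyctwo}, which also works but obliges you to recheck those verifications (all of which go through as you indicate).
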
 

\begin{proof}
It suffices to prove the $p$-analogue of
Theorem~\ref{thm:deltaversion}.  The given proof of
Theorem~\ref{thm:deltaversion} will apply, provided that we establish
a suitable $p$-analogue of Theorem~\ref{thm:aaprime}.  For this we 
need a $p$-analogue of Proposition~\ref{prop:acyctwo}.  In fact, 
if one puts $n=2p$ and uses the $2n$ words described exactly as 
in the proof of Proposition~\ref{prop:acyctwo}, one obtains an 
acyclic 2-complex $Y_p$ consisting of $4p$ octagons attached to a 
$4p$-petalled rose and admitting a free $\zz/{2p}$-action.  If 
$\tau_p$ is an order $p$ element of $\zz/{2p}$, then the action 
of $\tau_p$ on $Y_p$ has the required properties.  

The $p$-analogue of Theorem~\ref{thm:aaprime} is fairly similar to 
the original, but the definition of $X_{1,p}$, the $p$-analogue of 
the space $X_1$ appearing in the proof, may require some clarification.  
Let $Y'_p$ denote a second copy of $Y_p$, but with the trivial
$\zz/p$-action instead of the action described above, and let $Y''_p$ be
a disjoint union of $p$ copies of $Y'_p$, with $\zz/p$ acting by freely
permuting the copies.  (One could view $Y''_p$ as the direct product 
$Y'_p\times \zz/p$ with the diagonal action of $\zz/p$.)  There are 
two $\zz/p$-equivariant covering maps $f:Y''_p\rightarrow Y'_p$ and 
$g:Y''_p\rightarrow Y_p$.  The $\zz/p$-space $X_{1,p}$ is defined to
be the double mapping cylinder of $f$ and $g$: 
\[ X_{1,p}= \left(\textstyle{Y'\coprod Y''\times [0,4] \coprod
Y}\right)/ (y,0)\sim f(y),\,\,\, (y,4)\sim g(y)\quad \hbox{for all
  $y\in Y''$}. 
 \]
The remainder of the proof carries over easily to provide a pair 
$(A'_p,A_p)$ of locally CAT(0) cubical complexes with the required
properties.  
\end{proof}

\begin{proposition} The cubical complex $T_X$ is the cubical
  subdivision of a cube complex $\overline T_X$ which has 
  similar properties. 
\end{proposition}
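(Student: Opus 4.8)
The plan is to observe that $T_X$ was assembled entirely from pieces which are themselves cubical subdivisions of smaller cube complexes, and then to re-run the construction of Theorem~\ref{thm:deltaversion} one step coarser. Recall that the right-angled polygons of Section~\ref{sec:polygons} used to build $A$ and $A'$ were in each case obtained as the cubical subdivision of a polygon built from half-size squares (as noted in the proofs of Propositions~\ref{prop:acycone} and~\ref{prop:acyctwo}; for instance $Y$ uses the cubical subdivisions of the three hexagons of Figure~\ref{fig:two}, and $Y(n)$ can be built from petals of length $2$ rather than $4$). Write $\overline Y$, $\overline Y(n)$, and then $\overline A$, $\overline{A'}$, together with the corresponding involution, for the ``half-size'' cube complexes supplied by the coarse versions of Theorem~\ref{thm:aaprime}; likewise let $\bar\jmath:[0,2]\to\overline A$ be the loop wrapping once around the length-$2$ generator, so that the loop $j$ of Section~\ref{sec:main} is the cubical subdivision of $\bar\jmath$. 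Now carry out the inductive construction in the proof of Theorem~\ref{thm:deltaversion} verbatim, but using $\overline A$, $\overline{A'}$, $\bar\jmath$, mapping cylinders $M(2,\cdot)$ of length $2$ in place of $M(4,\cdot)$, and the first (rather than second) barycentric subdivision in the $1$-dimensional base case. This produces cube complexes $\overline T_X$ and $\overline U_X$, a cellular map $\bar\imath_X:\overline T_X\to\overline U_X$, a cellular involution $\bar\tau$, and a map $\bar t_X:\overline T_X\to X$.

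The verification is then a single induction on the number of simplices of $X$, showing that the cubical subdivision of $\overline T_X$ is $T_X$, and similarly that $\overline U_X$, $\bar\imath_X$, $\bar\tau$, $\bar t_X$ subdivide to $U_X$, $i_X$, $\tau$, $t_X$. All that is needed is that cubical subdivision commutes with the operations used to build $T_X$: first, for a locally isometric cellular map $f:T\to U$ of cube complexes, the cubical subdivision of $M(n,f)$ is $M(2n,f')$, where $f'$ is the cubical subdivision of $f$ --- because $T\times[0,n]$ subdivides to $T'\times[0,2n]$ and the identification $(t,n)\sim f(t)$ subdivides to $(t,2n)\sim f'(t)$; second, cubical subdivision commutes with finite direct products, so $\overline{T'}\times\overline A$ subdivides to $T'\times A$; third, cubical subdivision commutes with gluing two cube complexes along a common subcomplex; and fourth, it commutes with direct limits. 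Granting these, the $0$- and $1$-dimensional base cases are immediate (cubical subdivision of a $0$-complex is itself, and the subdivision of the first barycentric subdivision of a graph is its second barycentric subdivision), and the inductive step is formal.

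It remains to note that $\overline T_X$ has ``similar properties'', by which we mean the properties stated in Theorem~A with ``cubical complex'' replaced throughout by ``cube complex''. Cubical subdivision changes neither the underlying topological space nor the map to $X$, so $\bar t_X$ and $\bar\tau$ literally satisfy all of the homological and homotopical assertions of Theorem~A that $t_X$ and $\tau$ do, and $\dim\overline T_X=\dim T_X$. For the metric assertion, observe that $T_X$, as a metric space, is just $\overline T_X$ with all distances multiplied by $2$ (each side-$1/2$ cube of the cubical subdivision of $\overline T_X$ is rescaled to a unit cube of $T_X$); since being locally CAT(0) is a metric property invariant under rescaling, $\overline T_X$ is locally CAT(0) because $T_X$ is. The same remark shows that a subcomplex of $\overline T_X$ is totally geodesic precisely when the corresponding subcomplex of $T_X$ is, and that $\overline T_f$ is an isometric embedding (resp.\ a local isometry) exactly when $T_f$ is, which yields the functoriality statements.

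The main obstacle is purely bookkeeping: one must check, block by block, that each coarse building block really is a cube complex whose cubical subdivision is the cubical complex actually used in Sections~\ref{sec:polygons}--\ref{sec:conacyc} --- in particular that the half-size pentagons, hexagons and octagons, and the half-size complex $\overline Y$, satisfy as cube complexes the analogues of the properties demanded in Section~\ref{sec:conacyc}. Because cubical subdivision leaves the underlying space unchanged and merely doubles the metric, each such property is equivalent to the corresponding already-established property of $T_X$ and its constituents, so no genuinely new argument is required; the reduction simply has to be spelled out.
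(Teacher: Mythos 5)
Your proposal is correct and follows essentially the same route as the paper: the paper likewise notes that the octagons of Proposition~\ref{prop:acyctwo} can be taken to be cubical subdivisions, so that $Y$, and hence $A$ and $A'$, are subdivisions of cube complexes $\overline Y$, $\overline A$, $\overline{A'}$, and then builds $\overline T_X$ by rerunning the construction of Theorem~\ref{thm:deltaversion} with these coarse pieces. Your additional bookkeeping (half-length mapping cylinders, first barycentric subdivision in the $1$-dimensional case, compatibility of subdivision with products, gluings and limits, and the rescaling argument for the metric properties) is just an explicit spelling-out of what the paper leaves implicit.
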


\begin{proof} 
This involves checking that each of the cubical complexes used in the
construction can be viewed as the cubical subdivision of a cube
complex.  The octagons used in the construction of $Y$ in
Proposition~\ref{prop:acyctwo} can be chosen to be cubical
subdivisions, in which case $Y$ is the cubical subdivision of 
a cube complex $\overline Y$.  From this it follows that the 
cubical complexes $A$ and $A'$ 
constructed in Theorem~\ref{thm:aaprime} are the cubical subdivisions
of cube complexes $\overline A$ and $\overline A'$.  Now a cube 
complex $\overline T_X$ whose cubical subdivision is $T_X$ can 
be constructed from $\overline A$ and $\overline A'$ exactly as 
in the proof of Theorem~\ref{thm:deltaversion}.  
\end{proof}

\begin{proposition} 
For any $X$, the map $t_X:T_X\rightarrow X$ has the property that 
for any subcomplex $Y$ of $X$, $t_X^{-1}(Y)=T_Y$.  In particular, 
the map $t_X$ is proper.  
\end{proposition}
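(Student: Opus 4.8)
The plan is to establish the set-theoretic identity $t_X^{-1}(Y)=T_Y$ by induction on the number of simplices of $X$, following the inductive construction of $(T_X,t_X)$ in the proof of Theorem~\ref{thm:deltaversion}, and then to deduce the general (possibly infinite) case by a direct-limit argument; properness will then be immediate.

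For the base of the induction, when $\dim X\le 1$ the complex $T_X$ is $X$ itself (dimension $0$) or its second barycentric subdivision (dimension $1$) and $t_X$ is the corresponding subdivision map, a homeomorphism that carries the subdivision of a subcomplex $Y$ onto $T_Y$, so the claim is immediate. For the inductive step I would use the setup of the construction: write $X=W\cup\sigma$ with $\sigma$ an $n$-simplex, $n\ge 2$, and $\partial\sigma\subseteq W$, so that $T_X=T_W\cup_{T'}M(4,i')$ with $T'=T_{\partial\sigma}^\tau$, $U'=U_{\partial\sigma}$, $i'=i_{\partial\sigma}|_{T'}$, and $t_X$ agrees with $t_W$ on $T_W$, sends $U'\subseteq M(4,i')$ to the barycentre $b$ of $\sigma$, and interpolates linearly on $T'\times[0,4]$ between $t_W|_{T'}$ (at $s=0$) and the constant map to $b$ (at $s=4$). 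The one geometric point to be checked — and essentially the only place genuine content enters — is that $t_X$ maps all of $M(4,i')$ into the closed simplex $\sigma$ and, more precisely, that $t_X^{-1}(\partial\sigma)\cap M(4,i')=T'\times\{0\}$. This holds because $t_W$ maps $T'\subseteq T_{\partial\sigma}\subseteq T_W$ into $\partial\sigma$, while for $s>0$ the point $t_X(t,s)$ lies on the half-open segment in $\sigma$ from $t_W(t)\in\partial\sigma$ to $b\in\sigma^\circ$ and hence in $\sigma^\circ$ (a convex combination of a point of a simplex with positive weight on an interior point is interior), and $U'$ also maps to $b\in\sigma^\circ$.

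With that in hand, given a subcomplex $Y\subseteq X$ there are two cases according to whether $\sigma$ is a simplex of $Y$. If it is not, then $Y\subseteq W$ and $Y\cap\sigma\subseteq\partial\sigma$, so $t_X^{-1}(Y)\cap M(4,i')\subseteq T'\times\{0\}\subseteq T_W$; since $t_X=t_W$ on $T_W$ takes values in $W$, the inductive hypothesis applied to $W$ gives $t_X^{-1}(Y)=t_W^{-1}(Y)=T_Y$. If $\sigma\in Y$, then $\partial\sigma\subseteq Y$; put $Y_0=Y\cap W$, a subcomplex of $W$ containing $\partial\sigma$, so that $Y$ is obtained from $Y_0$ by adding $\sigma$ and $T_Y=T_{Y_0}\cup_{T'}M(4,i')$ sits inside $T_X=T_W\cup_{T'}M(4,i')$ precisely as $T_{Y_0}\cup M(4,i')$. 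Here $t_X^{-1}(Y)\cap T_W=t_W^{-1}(Y_0)=T_{Y_0}$ by the inductive hypothesis, and all of $M(4,i')$ maps into $\sigma\subseteq Y$, so $t_X^{-1}(Y)=T_{Y_0}\cup M(4,i')=T_Y$. This settles all finite $X$. For arbitrary $X$ I would use that $T_X=\varinjlim_F T_F$ over finite subcomplexes $F\subseteq X$, with $t_X|_{T_F}=t_F$: a point $p\in T_X$ lies in some $T_F$, and $t_X(p)=t_F(p)\in F$ belongs to $Y$ iff it belongs to $Y\cap F$ iff $p\in T_{Y\cap F}\subseteq T_Y$ by the finite case, while conversely every point of $T_Y=\varinjlim_{F'\subseteq Y}T_{F'}$ maps into $Y$; hence $t_X^{-1}(Y)=T_Y$.

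Finally, for properness: any compact $K\subseteq X$ lies in a finite subcomplex $Y$ (a compact subset of a CW-complex meets only finitely many open cells), so $t_X^{-1}(K)$ is a closed subset of $t_X^{-1}(Y)=T_Y$; since $Y$ is finite, $T_Y$ is a finite, hence compact, cubical complex, and therefore $t_X^{-1}(K)$ is compact. I expect the only real obstacle to be keeping the bookkeeping of the mapping-cylinder pieces $M(4,i')$ aligned with the inductive hypothesis; once the observation that $M(4,i')$ maps into $\sigma$ with the preimage of $\partial\sigma$ equal to the $T'$-end is isolated, everything else is formal.
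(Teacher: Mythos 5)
Your argument is correct and is exactly the unwinding of the inductive construction of $T_X$ and $t_X$ that the paper has in mind when it says the statement is ``immediate from the construction'': on each new mapping-cylinder piece $M(4,i')$ the map $t_X$ lands in $\sigma$ with only the $T'$-end hitting $\partial\sigma$, so the preimage of a subcomplex is built up exactly as $T_Y$ is, and the finite/limit/properness steps follow as you say. No gaps; your write-up just makes explicit what the paper leaves implicit.
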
 

\begin{proof} 
Immediate from the construction. 
\end{proof} 

\begin{proposition} 
The locally CAT(0) cubical complex $T_X$ is metrically complete 
if and only if $X$ has no infinite ascending chain of simplices.  
\end{proposition}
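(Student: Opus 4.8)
The plan is to combine Theorem~\ref{thm:cubecomplete}, which characterises metric completeness of a cubical complex as the non-existence of an infinite strictly ascending chain of cubes $C_1\subsetneq C_2\subsetneq\cdots$, with a dictionary between such chains of cubes in $T_X$ and infinite ascending chains of simplices $\sigma_1\subsetneq\sigma_2\subsetneq\cdots$ in $X$. So the whole statement reduces to proving that $T_X$ contains an infinite ascending chain of cubes if and only if $X$ contains an infinite ascending chain of simplices. Throughout I will use the inductive description of $T_X$ from the proof of Theorem~\ref{thm:deltaversion}: when a simplex $\sigma$ is attached to a subcomplex $W$ of $X$, the complex $T_X$ is got from $T_W$ by gluing on the mapping cylinder $M(4,i')$ of $i'\colon T_{\partial\sigma}^\tau\to U_{\partial\sigma}$ along its source copy of $T_{\partial\sigma}^\tau$; that this $M(4,i')$ is a \emph{finite-dimensional} cubical complex; that $\tau$ acts trivially on the tube $T_{\partial\sigma}^\tau\times[0,4]$ inside $M(4,i')$ (since $\tau$ is already trivial on the fixed subcomplex $T_{\partial\sigma}^\tau$); and that $\dim T_\sigma^\tau=\dim\sigma$ (Theorem~A). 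The base cases, where $T_X$ is the second barycentric subdivision of the $1$-skeleton of $X$ together with copies of the finite complex $A'$, contribute only chains of bounded length and are handled in the same spirit.

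For the direction that starts from an ascending chain of simplices, I would first refine any infinite ascending chain in $X$ to one $\sigma_1\subsetneq\sigma_2\subsetneq\cdots$ in which each $\sigma_k$ is a codimension-one face of $\sigma_{k+1}$, so that $\dim\sigma_k$ increases without bound. Since $\dim T_{\sigma_k}^\tau=\dim\sigma_k$, the fixed subcomplex $T_{\sigma_k}^\tau$ contains a cube $C_k$ of top dimension $\dim\sigma_k$. Because $\sigma_k$ is a proper face of $\sigma_{k+1}$, the inclusion $T_{\sigma_k}\subseteq T_{\partial\sigma_{k+1}}$ is a $\tau$-equivariant inclusion of totally geodesic subcomplexes, so $C_k$ is a cube of $T_{\partial\sigma_{k+1}}^\tau$. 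Consequently $C_{k+1}:=C_k\times[0,1]$ is a cube of the tube $T_{\partial\sigma_{k+1}}^\tau\times[0,4]$ that appears in $T_X$ when $\sigma_{k+1}$ is attached; since $\tau$ is trivial on this tube we have $C_{k+1}\subseteq T_{\sigma_{k+1}}^\tau$, and $C_{k+1}$ visibly contains $C_k=C_k\times\{0\}$ as a codimension-one face. Iterating gives an infinite ascending chain $C_1\subsetneq C_2\subsetneq\cdots$ in $T_X$, whence $T_X$ is not metrically complete by Theorem~\ref{thm:cubecomplete}.

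For the converse I would attach to each cube $C$ of $T_X$ the simplex $\mu(C)$ of $X$ whose attachment genuinely created $C$: the unique $\sigma$ such that $C$ lies in the mapping cylinder $M(4,i')$ belonging to $\sigma$, but not in its source copy of $T_{\partial\sigma}^\tau$. The key claim is that $\mu$ is monotone, i.e.\ $C\subsetneq C'$ implies $\mu(C)\subseteq\mu(C')$: writing $\sigma'=\mu(C')$ we have $C'\subseteq M(4,i')$ for $\sigma'$, hence also $C\subseteq M(4,i')$ for $\sigma'$; since $C$ is a face of $C'$, either $C$ lies in the new part of that mapping cylinder, so $\mu(C)=\sigma'$, or $C$ lies in the source copy $T_{\partial\sigma'}^\tau\subseteq T_{\partial\sigma'}$, so by induction on the construction $\mu(C)$ is a proper face of $\sigma'$. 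Now suppose $C_1\subsetneq C_2\subsetneq\cdots$ is an infinite ascending chain of cubes in $T_X$; then $\dim C_k\to\infty$ and the simplices $\mu(C_1)\subseteq\mu(C_2)\subseteq\cdots$ cannot be eventually constant, since if $\mu(C_k)=\sigma$ for all large $k$ then infinitely many $C_k$ of unbounded dimension would lie in the finite-dimensional complex $M(4,i')$ attached with $\sigma$. Hence $(\mu(C_k))$ increases strictly infinitely often, and passing to that subsequence produces an infinite ascending chain of simplices in $X$; by Theorem~\ref{thm:cubecomplete} this finishes the proof.

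The main obstacle is the bookkeeping in the converse: pinning down the assignment $C\mapsto\mu(C)$ precisely, which means keeping track of exactly which cubes are genuinely new at each stage of the inductive construction (including those in the complexes $U_{\partial\sigma}$ sitting at the far ends of the mapping cylinders, and the cubes contributed in the one- and zero-dimensional base cases), and then verifying monotonicity from the way $M(4,i')$ is glued onto $T_W$. By comparison the forward direction is routine, once one records $\dim T_\sigma^\tau=\dim\sigma$ and that the mapping-cylinder tubes lie entirely in the $\tau$-fixed set.
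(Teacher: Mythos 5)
Your proposal is correct and follows the paper's route exactly: reduce via Theorem~\ref{thm:cubecomplete} to the statement that $T_X$ has an infinite ascending chain of cubes if and only if $X$ has an infinite ascending chain of simplices, which the paper dismisses as immediate from the construction and which you verify in detail (the tube $T_{\partial\sigma}^\tau\times[0,4]$ lying in the fixed set gives the forward direction, and assigning to each cube the simplex whose attachment created it, using finiteness of each mapping cylinder, gives the converse). The remaining bookkeeping you flag (well-definedness and monotonicity of $C\mapsto\mu(C)$, and the low-dimensional base cases, which one can avoid by discarding the first terms of the chain) is routine and does not affect correctness.
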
 

\begin{proof} 
By Theorem~\ref{thm:cubecomplete}, 
a locally CAT(0) cubical complex is metrically complete if and 
only if it contains no infinite ascending chain of cubes.  It is 
immediate from the construction that this condition holds for 
$T_X$ if and only if $X$ contains no infinite ascending chain 
of simplices.  (Equivalently, again by Theorem~\ref{thm:cubecomplete}, 
$T_X$ is complete if and only if the all-right metric on $X$ is
complete.) 
\end{proof} 

\begin{proposition} 
There is a locally CAT(0) cubical complex $T'_X$, equipped with 
an involution $\tau'$ and a map $t'_X:T'_X\rightarrow X$ which 
has all of the properties listed in Theorem~\ref{thm:deltaversion}, 
and such that $T'_X$ is metrically complete for every $X$.  The 
map $t'_X:T'_X\rightarrow X$ is proper if and only if $X$ is 
finite-dimensional.  
\end{proposition}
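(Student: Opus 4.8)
The plan is to build $T'_X$ as a mapping telescope over the skeleton filtration of $X$. By the preceding proposition the only obstruction to metric completeness of $T_X$ is an infinite ascending chain of simplices in $X$, and the effect of passing to a telescope is to ``spread out'' the successive skeleta, so that even in the presence of such chains no Cauchy sequence can drift through infinitely many of them within a bounded distance. Write $X^{(0)}\subseteq X^{(1)}\subseteq\cdots$ for the skeleta of $X$, all of which lie in $\calc$; by the functoriality part of Theorem~\ref{thm:deltaversion} the inclusions induce totally geodesic cubical embeddings $\iota_n\colon T_{X^{(n)}}\hookrightarrow T_{X^{(n+1)}}$ that are compatible with the involutions. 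Using the length-one mapping cylinder $M(1,\iota_n)$ introduced in Section~\ref{sec:main}, I would set $T'_X$ to be the complex obtained by gluing $M(1,\iota_n)$ to $M(1,\iota_{n+1})$ along their common copy of $T_{X^{(n+1)}}$ for all $n\geq 0$, truncating once the skeleta of $X$ stabilise; so for $\dim X<\infty$ this is a finite union whose final piece has $T_X$ as its free end. Since $\tau$ is natural for subcomplex inclusions, the formula $\tau'(x,s)=(\tau x,s)$ on each cylinder piece defines a cellular isometric involution $\tau'$ on $T'_X$, and letting $t'_X$ be $t_{X^{(n)}}$ composed with the collapse of the $n$-th cylinder piece onto $T_{X^{(n)}}$ gives a well-defined map $t'_X\colon T'_X\to X$; equivalently $t'_X=t_X\circ c$, where $c\colon T'_X\to T_X$ crushes each cylinder onto its end.

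Next I would verify that $T'_X$, $\tau'$ and $t'_X$ have the properties listed in Theorem~\ref{thm:deltaversion}. Each $M(1,\iota_n)$ is a locally CAT(0) cubical complex, being the product of the cubical complex $T_{X^{(n)}}$ with a unit interval attached to $T_{X^{(n+1)}}$ along the totally geodesic, hence full, subcomplex $T_{X^{(n)}}$; one checks that all vertex links are flag and invokes Theorem~\ref{gromovthm}, and for cubicality (as opposed to being merely a cube complex) one argues as for $A'$ in the proof of Theorem~\ref{thm:aaprime}, or else passes to a cubical subdivision by Corollary~\ref{cor:secsubdiv}. Gluing the $M(1,\iota_n)$ along totally geodesic subcomplexes preserves flagness of links, so $T'_X$ is a locally CAT(0) cubical complex, and the construction is visibly functorial in $X$. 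The collapse map $c$ is a $\tau$-equivariant homotopy equivalence; it restricts to the analogous homotopy equivalence $(T'_X)^{\tau'}\to T_X^{\tau}$ and descends to a homotopy equivalence $T'_X/\langle\tau'\rangle\to T_X/\langle\tau\rangle$, so composing with the three maps supplied by Theorem~\ref{thm:deltaversion} shows that $t'_X$ is a homology isomorphism for local coefficients, that $T'_X/\langle\tau'\rangle\to X$ is a homotopy equivalence, and that $t'_X\colon (T'_X)^{\tau'}\to X$ is a homology isomorphism. The remaining naturality statements (isometric embedding for injective $f$, local isometry for locally injective $f$, compatibility with unions of subcomplexes) are inherited one skeleton at a time, and the dimension count is that of Theorem~A up to an increase of one coming from the interval factors; in particular $\dim T'_X$ is infinite precisely when $\dim X$ is.

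Finally I would establish the two metric assertions, and here lies the only real work. For completeness, by Theorem~\ref{thm:cubecomplete} it suffices to show that $T'_X$ contains no infinite ascending chain of cubes. The point is that each skeleton $X^{(n)}$ is finite-dimensional, so each piece $M(1,\iota_n)$ is finite-dimensional; and one checks that an ascending chain of cubes of $T'_X$ cannot pass from one piece to a non-adjacent one, and that once such a chain enters the interior of some cylinder $T_{X^{(n)}}\times[0,1]$ it can only grow within that single piece. Hence every ascending chain of cubes lies in a union of at most two consecutive pieces, which is finite-dimensional, and is therefore finite. For properness: if $\dim X<\infty$ the telescope has only finitely many pieces and $(t'_X)^{-1}(K)$ is a finite subcomplex for every finite subcomplex $K$ of $X$, so $t'_X$ is proper; if $\dim X=\infty$ the telescope has cylinder pieces at arbitrarily high levels, so the preimage of the barycentre of any single simplex of $X$ contains points in cylinder pieces of arbitrarily high level, and any two such points are at distance at least one, whence the preimage is non-compact and $t'_X$ is not proper. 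This gives the claimed equivalence. I expect the completeness step to be the main obstacle, and in particular the combinatorial point that the telescopic structure prevents infinite ascending chains of cubes; everything else reduces, via the collapse map $c$, to facts about $T_X$ already established in Theorem~\ref{thm:deltaversion}.
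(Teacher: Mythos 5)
Your construction is exactly the paper's: the paper defines $T'_X$ as the iterated mapping cylinder (telescope) of $T_{X^0}\rightarrow T_{X^1}\rightarrow\cdots$, with $t'_X=t_X\circ p_X$ for the collapse map $p_X$, and leaves the verifications as routine. Your proposal is correct and simply fills in the details (completeness via Theorem~\ref{thm:cubecomplete}, properness via points over a fixed simplex at arbitrarily high telescope levels) that the paper asserts "follow readily".
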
 

\begin{proof} 
If $X$ is $n$-dimensional, define $T'_X$ to be the iterated mapping
cylinder of the inclusion maps 
\[ T_{X^0} \rightarrow T_{X^1}\rightarrow \cdots \rightarrow
T_{X^n}.\]
From the definition of this space comes a map $p_X:T'_X\rightarrow
T_X$.  In general, define $T'_X$ to be the union of the subspaces
$T'_{X^n}$, and define a map $p_X:T'_X\rightarrow T_X$ as the direct 
limit of the maps $p_{X^n}$.  Now define $t'_X=t_X\circ p_X$.  The 
claimed properties follow readily from these definitions.  
\end{proof} 

\begin{remark} 
If $x_0$ is a vertex of $X$, then $x_0'=T_{x_0}$ is a vertex of
$T_X$ which is fixed by the $\tau$-action.  Thus $t_X$ induces 
maps of fundamental groups 
\[{t_X}_*:\pi_1(T_X,x_0')\rightarrow \pi_1(X,x_0),\quad 
{t_X}_*:\pi_1(T_X^\tau,x_0')\rightarrow \pi_1(X,x_0),\] 
which are easily seen to be surjective.  Note also that 
$\tau$ induces an automorphism $\tau_*$ of $\pi_1(T_X,x_0')$.  
Since $T_X^\tau$ includes into $T_X$ as a totally geodesic 
subcomplex, it follows that $\pi_1(T_X^\tau,x_0')$ is equal 
to the centralizer of $\tau_*$ in $\pi_1(T_X,x_0')$.  
\end{remark} 

\section{Arbitrary spaces} 
\label{sec:eig}

If $X$ is any topological space, then the first barycentric
subdivision of the space of singular simplicies, $\sing(X)$, is in the
category $\calc$ (the category of $\Delta$-complexes in which all the
edges of each simplex are distinct).  Composing this functor with our 
construction $T$ gives the following result.  

\begin{theorem} 
There is a functor $\hatT$ from topological spaces to the category
$\cald$ as defined in Theorem~\ref{thm:deltaversion}, an involution 
$\tau$ of $\hatT_X$ and a map $\hatt_X:\hatT_X\rightarrow X$ such that 

\begin{enumerate}
\item 
The map $\hatt_X$ induces an isomorphism on singular homology for any 
local coefficients on $X$.  

\item 
The involution $\tau$ on $\hatT_X$ has the property that $\hatt_X\circ
\tau= \hatt_X$ and the induced map $\hatT_X/\langle \tau\rangle 
\rightarrow X$ is a weak homotopy equivalence. 

\item 
The map $\hatt_X:\hatT_X^\tau
\rightarrow X$ induces an isomorphism on singular homology for any local
coefficients on $X$, where $\hatT_X^\tau$ denotes the fixed point set 
in $\hatT_X$ for the action of $\tau$.  

\item A continuous map $f:X\rightarrow Y$ gives rise to a cubical
  map $\hatT_f:\hatT_X\rightarrow \hatT_Y$.  

\item If $f$ is injective, then $\hatT_f$ embeds $\hatT_X$ as a totally
  geodesic subcomplex of $\hatT_Y$.  

\item If $f$ is locally injective, then $\hatT_f$ is a locally isometric
  map.  

\end{enumerate} 
\end{theorem}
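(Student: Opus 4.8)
The plan is to take $\hatT$ to be the composite of the functor $T$ of Theorem~\ref{thm:deltaversion} with the functor sending a space $X$ to the first barycentric subdivision $\sing(X)'$ of the $\Delta$-complex $\sing(X)$ of singular simplices of $X$, with the involution $\tau$ on $\hatT_X:=T_{\sing(X)'}$ the one supplied by $T$. First I would check this is well defined: $\sing(X)$ is a $\Delta$-complex, and the vertices (hence the edges) of each simplex of its barycentric subdivision are distinct, so $\sing(X)'\in\calc$; a continuous map $f\colon X\rightarrow Y$ induces a map of $\Delta$-complexes $\sing(f)\colon\sing(X)\rightarrow\sing(Y)$, $\sigma\mapsto f\circ\sigma$, hence a morphism $\sing(f)'$ of $\calc$ (which is a full subcategory of $\Delta$-complexes, so $T$ is defined on it). This gives a functor $\hatT$ from spaces to $\cald$ and the cubical map $\hatT_f=T_{\sing(f)'}$, which is property~4. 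I would then set $\hatt_X\colon\hatT_X\rightarrow X$ to be $t_{\sing(X)'}\colon T_{\sing(X)'}\rightarrow|\sing(X)'|$ followed by the natural map $\epsilon_X\colon|\sing(X)'|\rightarrow X$ obtained by identifying $|\sing(X)'|$ with $|\sing(X)|$ and applying the counit; this $\epsilon_X$ is a weak homotopy equivalence (the classical statement for the singular \emph{simplicial} set, together with the fact that the semi-simplicial realization of a proper simplicial set agrees up to homotopy with the usual one). Then $\hatt_X$ is natural in $X$, and $\hatt_X\circ\tau=\hatt_X$ because $t_{\sing(X)'}\circ\tau=t_{\sing(X)'}$.

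For properties 1--3 I would combine Theorem~\ref{thm:deltaversion}, applied to the object $\sing(X)'$ of $\calc$, with the standard fact that a weak homotopy equivalence induces an isomorphism on singular homology with any compatible local coefficients; being a $\pi_1$-isomorphism, $\epsilon_X$ also matches up local coefficient systems on $X$ with those on $|\sing(X)'|$. Thus, writing $t=t_{\sing(X)'}$, for any local system $M$ on $X$ the composite
\[
H_*(\hatT_X;\hatt_X^*M)\xrightarrow{\,t\,}H_*(|\sing(X)'|;\epsilon_X^*M)\xrightarrow{\,\epsilon_X\,}H_*(X;M)
\]
is an isomorphism, which is property~1; the same argument with $\hatT_X^\tau$ in place of $\hatT_X$ (using the corresponding clause of Theorem~\ref{thm:deltaversion}) gives property~3. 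For property~2, Theorem~\ref{thm:deltaversion} gives that $\hatT_X/\langle\tau\rangle\rightarrow|\sing(X)'|$ is a homotopy equivalence, and composing with the weak homotopy equivalence $\epsilon_X$ yields a weak homotopy equivalence $\hatT_X/\langle\tau\rangle\rightarrow X$; only this is claimed, since $X$ need not have the homotopy type of a CW-complex.

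Properties 5 and 6 follow from the corresponding naturality clauses of Theorem~\ref{thm:deltaversion} once I verify that $X\mapsto\sing(X)'$ sends injective maps to injective maps of $\Delta$-complexes and locally injective maps to locally injective maps of $\Delta$-complexes. The first is immediate: if $f$ is injective then $f\circ\sigma=f\circ\sigma'$ forces $\sigma=\sigma'$, so $\sing(f)$ is injective, and barycentric subdivision preserves injectivity; hence $\hatT_f=T_{\sing(f)'}$ is an isometric embedding onto a totally geodesic subcomplex. I expect property~6 to be the only step requiring genuine care: I must show that $\sing(f)$, and therefore $\sing(f)'$, is injective on the link of each vertex whenever $f$ is locally injective. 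Unwinding the structure of the link of a vertex $\sigma$ of $\sing(X)'$, the map there preserves the relevant face data, so injectivity reduces to a unique-lifting statement: if $f\circ\alpha=f\circ\beta$ for singular simplices $\alpha,\beta$ agreeing on some common face, then $\alpha=\beta$. This holds because the set of points of the connected standard simplex on which $\alpha$ and $\beta$ agree is closed (taking $X$ Hausdorff, which may be assumed), open (near a point where $\alpha=\beta$ pass to a neighbourhood of $X$ on which $f$ is injective and use $f\alpha=f\beta$), and non-empty, hence everything. Feeding this into the clause of Theorem~\ref{thm:deltaversion} that $T_g$ is locally isometric whenever $g$ is locally injective gives property~6 and completes the proof.
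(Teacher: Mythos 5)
Your construction is exactly the paper's: take $\hatT_X=T_{\sing(X)'}$ with the involution, the map $\hatt_X$, and properties 1--6 inherited from Theorem~\ref{thm:deltaversion}, and indeed the paper's entire proof is the remark that the only point needing verification is that a locally injective $f\colon X\rightarrow Y$ induces a locally injective $\sing(f)$ --- precisely the step you work out. Your open--closed connectedness argument for that step supplies more detail than the paper does and is essentially correct, but be aware that the parenthetical ``taking $X$ Hausdorff, which may be assumed'' is an assumption the theorem statement (for arbitrary topological spaces) does not actually grant you, so strictly speaking that clause needs either justification or an added hypothesis.
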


\begin{remark} 
The theorem requires no connectivity assumptions on $X$ provided that 
singular homology with local coefficients is defined as
in~\cite[Ch.~VI.2]{whitehead}.  
\end{remark} 

\begin{proof} 
The only point that needs some verification is that a locally
injective map of topological spaces $f:X\rightarrow Y$ induces a
locally injective map of $\Delta$-complexes 
$\sing(f):\sing(X)\rightarrow \sing(Y)$.  
\end{proof}

\section{Generalized and equivariant homologies}
\label{sec:nin} 

Let $\calk$ denote any generalized homology theory satisfying 
the direct sum axiom (equivalently, the corresponding 
reduced theory satisfies Milnor's wedge axiom).  
A standard argument gives: 

\begin{proposition} 
For any $X\in \calc$ and any $\calk$, the map $t_X$ induces an isomorphism 
from $\calk_*(T_X)$ to $\calk_*(X)$.
\end{proposition}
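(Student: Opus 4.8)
The plan is to reduce the statement for a general homology theory $\calk$ satisfying the direct sum axiom to the already-established case of ordinary integral homology, using the fact that the map $t_X:T_X\rightarrow X$ was constructed by a filtered colimit of maps of pairs each of which is an iso on ordinary homology. First I would recall from the proof of Theorem~\ref{thm:deltaversion} that for each one-simplex attachment $X=W\cup\sigma$ the map $t_X$ induces an isomorphism $H_*(T_X,T_W)\rightarrow H_*(X,W)$, and that moreover, by inspection of the construction, the pair $(T_X,T_W)$ is obtained from $(T_W,T_W)$ by attaching the relative CW-pair coming from $M(4,i')$ collapsing $T'$; in particular $H_*(T_X,T_W)$ is concentrated in the single degree $n$ with value a free abelian group whose rank equals the number of new $n$-cells, matching $H_*(X^{n},X^{n-1})$ up to the relabelling $X=W\cup\sigma$. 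The point is that $t_X$ carries the relative cellular chain complex of $(T_X,T_W)$ to that of $(X,W)$ inducing an isomorphism on homology, and both are free chain complexes concentrated in a single degree.

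Next I would run the Atiyah--Hirzebruch spectral sequence (or, equivalently, an induction up the skeletal filtration of $X$ and the corresponding filtration of $T_X$). For $X$ finite-dimensional, put on $X$ its skeletal filtration and on $T_X$ the pulled-back filtration $t_X^{-1}(X^n)=T_{X^n}$ (using the proposition above that $t_X^{-1}(Y)=T_Y$). The map $t_X$ is then a filtered map, hence induces a map of Atiyah--Hirzebruch spectral sequences $E^2_{p,q}(T_X)\Rightarrow\calk_{p+q}(T_X)$ and $E^2_{p,q}(X)\Rightarrow\calk_{p+q}(X)$, with $E^2_{p,q}=H_p(-;\calk_q(\mathrm{pt}))$. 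Since $t_X$ is an $H_*(-;M)$-isomorphism for every local coefficient system $M$ — property~1 of Theorem~A — and in particular for the constant system $\calk_q(\mathrm{pt})$, the map is an isomorphism on $E^2$, hence on $E^\infty$, hence on the associated graded of $\calk_*$, hence on $\calk_*$ itself (the filtration is exhaustive and bounded below, and for finite-dimensional $X$ bounded above as well, so there are no $\lim^1$ issues). This handles the finite-dimensional case.

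For infinite-dimensional $X$ I would invoke the direct sum axiom: both $T_X=\mathrm{colim}\,T_{X^n}$ and $X=\mathrm{colim}\,X^n$ are increasing unions of subcomplexes with the colimit topology (the inclusions $T_{X^n}\hookrightarrow T_{X^{n+1}}$ being subcomplex inclusions, in particular cofibrations), so $\calk_*$ of each is the colimit of the $\calk_*$ of the skeleta, and the finite-dimensional case gives a compatible family of isomorphisms $\calk_*(T_{X^n})\xrightarrow{\sim}\calk_*(X^n)$ whose colimit is the desired isomorphism. (If one prefers to avoid a convergence discussion of the spectral sequence, one can instead argue purely by induction on skeleta via the long exact sequences of the pairs $(X^{n+1},X^n)$ and $(T_{X^{n+1}},T_{X^n})$ together with the five lemma, using that $t_X$ is an iso on $\calk_*$ of the relative terms because those are wedges of $(n+1)$-spheres on both sides, matched by $t_X$; this is just the proof of Atiyah--Hirzebruch convergence unwound, and is the cleanest route.)

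The main obstacle — really the only delicate point — is establishing that $t_X$ induces an isomorphism on $\calk_*(X^{n+1},X^n)\cong\calk_*(T_{X^{n+1}},T_{X^n})$ with the correct identification of degrees; for ordinary homology this was done in the proof of Theorem~\ref{thm:deltaversion} by an explicit chain-level argument, and for a general $\calk$ the cleanest way is to note that $H_*(T_{X^{n+1}},T_{X^n})$ being a free abelian group concentrated in degree $n+1$ forces (by the universal coefficient-style collapse of the relevant cell structure) $\calk_*(T_{X^{n+1}},T_{X^n})\cong\calk_{*-(n+1)}(\mathrm{pt})\otimes H_{n+1}(X^{n+1},X^n)$, matching $\calk_*(X^{n+1},X^n)$ compatibly with $t_X$. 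Equivalently: the subquotient $T_{X^{n+1}}/T_{X^n}$ is a wedge of $(n+1)$-spheres, one for each $(n+1)$-simplex of $X$ (since each newly attached piece $M(4,i')$ with $T'$ collapsed is, up to homotopy, an $(n+1)$-sphere by the inductive homology computation), and $t_X$ induces the evident map to $X^{n+1}/X^n$, which is a bouquet of $(n+1)$-spheres indexed the same way, by a degree-one map on each wedge summand. Granting that, the rest is the standard Atiyah--Hirzebruch bookkeeping.
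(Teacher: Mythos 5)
Your main line of argument is correct and is essentially the paper's own proof: the relative Atiyah--Hirzebruch spectral sequence of the pair $(T_{X^{n+1}},T_{X^n})$ collapses because the ordinary relative homology is free and concentrated in a single degree, naturality of the spectral sequence then gives the isomorphism on relative $\calk_*$, and one finishes by induction over skeleta together with the direct sum axiom for the passage to infinite-dimensional $X$. (The paper organises this slightly differently -- a single-simplex attachment first, then the skeletal pair via the direct sum axiom -- but that is a cosmetic difference.)

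However, the claim you make twice, that $T_{X^{n+1}}/T_{X^n}$ is \emph{up to homotopy} a wedge of $(n+1)$-spheres ``by the inductive homology computation'', is not justified and is false in general, so the route you describe as cleanest (five-lemma induction using the wedge axiom applied to genuine sphere wedges matched by degree-one maps) does not work as stated. The piece attached over an $(n+1)$-simplex $\sigma$, after collapsing $T'=T_{\partial\sigma}^\tau$, is the mapping cone of $i'\colon T'\rightarrow U'$ with $U'=U_{\partial\sigma}$; since $U'$ is acyclic and $T'$ has the homology of $S^n$, this cone has the \emph{homology} of $S^{n+1}$, but its fundamental group is the quotient of $\pi_1(U')$ by the normal closure of the image of $\pi_1(T')$, and $\pi_1(U')$ is a nontrivial perfect group (already for a $2$-simplex, $U'$ is a wedge of three copies of $A'$ and one is killing the normal closure of a single element of $\pi_1(A')\ast\pi_1(A')\ast\pi_1(A')$). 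A homology isomorphism does not yield a homotopy equivalence in this non-simply-connected situation; indeed, if these subquotients really were wedges of spheres, the generalized-homology statement would be a triviality and the whole subtlety of the Kan--Thurston construction (homology spheres assembled from aspherical pieces) would disappear. So the wedge-axiom shortcut must be discarded, and the identification $\calk_*(T_{X^{n+1}},T_{X^n})\cong H_{n+1}(T_{X^{n+1}},T_{X^n})\otimes\calk_{*-(n+1)}(\mathrm{pt})$ has to come from the relative spectral-sequence (or equivalent) argument that you also give -- which is precisely the paper's proof, and which, note, is also what is needed to identify the $E^1$-term of the spectral sequence of the filtration of $T_X$ by the $T_{X^p}$ in your second paragraph, since that filtration is not the skeletal filtration of $T_X$.
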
 

\begin{proof} 
Prove also the corresponding statement for relative homology.  
First consider the case of a pair $(X,Y)$, where $X$ is
$n$-dimensional and $Y$ is a subcomplex
of $X$ so that $X-Y$ consists of a single $n$-simplex.  In this case 
the Atiyah-Hirzebruch spectral sequence for computing the relative 
groups $\calk_*(T_X,T_Y)$ collapses, and by naturality of this 
spectral sequence one sees that the map $t_X:(T_X,T_Y)\rightarrow
(X,Y)$ induces an isomorphism $\calk_*(T_X,T_Y)\rightarrow
\calk_*(X,Y)$.  The case when $X=X^n$ is $n$-dimensional and 
$Y$ is the $(n-1)$-skeleton of $X$ follows (here we use the 
direct sum axiom).  

For finite-dimensional $X$, the absolute case follows from the 
relative case by induction.  The general case follows from this, 
since (again using the direct sum axiom) one knows that the
natural map from the direct limit of the $\calk_*(X^n)$ to 
$\calk_*(X)$ is an isomorphism, and similarly the natural map 
from the direct limit of the $\calk_*(T_{X^n})$ to $\calk_*(T_X)$ 
is an isomorphism.  The general relative case now follows.  
\end{proof} 

Now suppose that $G$ is a group acting on $X\in \calc$, and that 
$\calk^G_*$ is a $G$-homology theory in the sense of~\cite{lueck}.  
(Roughly speaking, this is a sequence of functors from pairs of 
$G$-CW-complexes to abelian groups which satisfies analogues of 
the Eilenberg-Steenrod axioms (except the dimension axiom) and 
a direct sum axiom.)  Then $G$ also acts on $T_X$, and a mild 
generalization of the argument given above gives: 

\begin{proposition} 
For any such $X$, $G$ and $\calk^G_*$, the map $t_X:T_X\rightarrow X$ 
is $G$-equivariant and induces an isomorphism
$\calk^G_*(T_X)\rightarrow \calk^G_*(X)$.  
\end{proposition}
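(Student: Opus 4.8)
The plan is to reproduce the proof of the preceding (non-equivariant) proposition almost word for word, with individual simplices replaced by $G$-orbits of simplices and with the Atiyah--Hirzebruch spectral sequence replaced by an auxiliary family of ordinary generalized homology theories built from $\calk^G_*$. Two preliminary remarks set this up. First, $t_X$ is $G$-equivariant simply because $T$ is a functor on $\calc$ and $G$ acts on $X$ through morphisms of $\calc$. Second --- and this is the one place the combinatorial hypothesis on $X$ enters --- because $X$ is a $\Delta$-complex, any automorphism of $X$ fixing a simplex $\bar\sigma$ also fixes each of its iterated faces, hence fixes $\bar\sigma$ pointwise; so the $G$-action on $X$ is regular, $X$ is a $G$-CW-complex, and (applying $T$ to the identity map of $\bar\sigma$) the stabiliser $H=\mathrm{Stab}_G(\sigma)$ acts trivially on $T_{\bar\sigma}$, so that $T_X$ is a $G$-CW-complex and $t_X$ is $G$-cellular. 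Exactly as in the non-equivariant proof it then suffices to prove the relative statement, and --- via the long exact sequences of the pairs $(X^n,X^{n-1})$ together with the five lemma in the finite-dimensional case and a mapping-telescope argument (using the direct sum axiom) for general $X$ --- this reduces, again by the direct sum axiom, to showing that $t_X$ induces an isomorphism $\calk^G_*(T_X,T_W)\to\calk^G_*(X,W)$ whenever $W$ is a $G$-subcomplex of $X$ with $X\setminus W$ a single $G$-orbit of $n$-simplices.

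For each subgroup $H\leq G$ I shall route the argument through the functor
\[ h^{(H)}_*(Z,Z')\ :=\ \calk^G_*\bigl(G/H\times Z,\ G/H\times Z'\bigr), \]
from pairs of CW-complexes to graded abelian groups, where $G$ acts only on the factor $G/H$. The Eilenberg--Steenrod axioms and the direct sum axiom for $h^{(H)}_*$ are all immediate from the corresponding properties of $\calk^G_*$, since $Z\mapsto G/H\times Z$ sends homotopies to $G$-homotopies, cofibrations to $G$-cofibrations and coproducts to $G$-coproducts; so $h^{(H)}_*$ is a generalized homology theory satisfying the direct sum axiom, with coefficients $h^{(H)}_n(\mathrm{pt})=\calk^G_n(G/H)$.

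Now fix such a pair $(X,W)$, pick an $n$-simplex $\sigma$ in the relevant orbit, put $H=\mathrm{Stab}_G(\sigma)$, and let $G\bar\sigma\subseteq X$ be the subcomplex it generates; then $X=W\cup G\bar\sigma$ and $W\cap G\bar\sigma=P$, where $P:=G\,\partial\sigma$. Regularity forces $g\bar\sigma$ to depend only on the coset $gH$, and the open cells $g\cdot(\sigma\setminus\partial\sigma)$ for $gH\in G/H$ are pairwise disjoint and disjoint from $P$, so $X/W=(G\bar\sigma)/P\cong G/H_+\wedge(\bar\sigma/\partial\sigma)$ as pointed $G$-spaces, with trivial action on $\bar\sigma/\partial\sigma$. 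Applying $T$, and using that $T$ carries unions of subcomplexes to unions (Theorem~A) and, via $t_X^{-1}(Y)=T_Y$ (Section~\ref{sec:sev}), intersections to intersections, together with the triviality of the $H$-action on $T_{\bar\sigma}$, one gets in exactly the same way $T_X/T_W\cong G/H_+\wedge(T_{\bar\sigma}/T_{\partial\sigma})$, with $t_X$ inducing $\mathrm{id}_{G/H_+}\wedge\bigl(t_{\bar\sigma}\colon T_{\bar\sigma}/T_{\partial\sigma}\to\bar\sigma/\partial\sigma\bigr)$. Unravelling the definition of $h^{(H)}_*$, the map $t_X\colon\calk^G_*(T_X,T_W)\to\calk^G_*(X,W)$ is thereby identified with $(t_{\bar\sigma})_*\colon h^{(H)}_*(T_{\bar\sigma},T_{\partial\sigma})\to h^{(H)}_*(\bar\sigma,\partial\sigma)$. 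Since $\bar\sigma$ is an object of $\calc$ and $h^{(H)}_*$ satisfies the direct sum axiom, the preceding proposition --- in the relative form established in its proof --- says precisely that this map is an isomorphism. Running the reductions of the first paragraph now completes the proof.

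I expect the step requiring the most care to be the equivariant bookkeeping in the third paragraph: identifying the quotient $T_X/T_W$ (over a single orbit) with $G/H_+\wedge(T_{\bar\sigma}/T_{\partial\sigma})$, i.e. with an ``induced'' object carrying the trivial action on the second factor, and verifying that this identification is natural in $t_X$. This is where regularity of the action --- which is automatic for $\Delta$-complexes --- and functoriality of $T$ are doing the real work, since together they convert one equivariant problem into finitely many ordinary ones, one per orbit type. Everything else --- checking that each $h^{(H)}_*$ is a homology theory, and the skeletal induction and passage to the limit --- is a routine transcription of the non-equivariant argument, which is what makes the generalization ``mild''.
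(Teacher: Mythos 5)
Your proposal is correct and follows essentially the route the paper intends: the paper's proof is precisely a ``mild generalization'' of the preceding non-equivariant argument, carried out orbitwise over skeleta using the direct sum axiom, and that is what you do. Your one refinement --- packaging each orbit type into the auxiliary theories $h^{(H)}_*(Z)=\calk^G_*(G/H\times Z)$ so that the non-equivariant proposition can be quoted verbatim instead of re-running its Atiyah--Hirzebruch argument equivariantly --- is a clean implementation of the same idea, and your use of the rigidity of $\Delta$-complex automorphisms to get trivial stabilizer actions on $T_{\bar\sigma}$ is exactly the point that makes the orbitwise identification $T_X/T_W\cong G/H_+\wedge(T_{\bar\sigma}/T_{\partial\sigma})$ legitimate.
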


Before continuing, we make some remarks concerning equivariant 
homology theories.  L\"uck has given two slightly different 
definitions of an equivariant homology theory $\calk^?_*$, 
one more restrictive than the other.  The results of this 
section are valid for the less restrictive definition, but 
we shall require the more restrictive definition in
Section~\ref{sec:ele}, and so we briefly describe both.  
The less restrictive definition is given in~\cite[sec.~1]{lueck}.  
The more restrictive definition is given
in~\cite[def.~4.2]{luecksur}.  The special case
of~\cite[def.~1.3]{bartechtlueck} (a definition of an `equivariant 
homology theory over a group $\Gamma$') in which the group 
$\Gamma$ is trivial is equivalent to~\cite[def.~4.2]{luecksur}.  

In each case we are given, for each group $G$ a $G$-homology 
theory $\calk^G_*$, together with induction maps that relate
these theories for different groups.  
If $\alpha:H\rightarrow G$ is 
a group homomorphism and $X$ is an $H$-CW-complex, there is a 
$G$-CW-complex $\ind_\alpha(X)$, defined as the quotient space:  
\[G\times X / (g,x)\sim (g\alpha(h),h^{-1}x) \qquad\hbox{for all $h\in H$}.\]
In the more restrictive definition, there is a map $\ind_\alpha:
\calk^H_*(X)\rightarrow \calk^G_*(X)$ for every $H$-CW-complex $X$ 
and every group homomorphism $\alpha$.  Moreover, the map
$\ind_\alpha$ is required to be an isomorphism whenever $\ker(\alpha)$ 
acts freely on $X$.  In the less restrictive definition, the map 
$\ind_\alpha$ is only required to be defined in the case when 
$\ker(\alpha)$ acts freely on $X$ (when it is still an isomorphism).

For the remainder of this section, we assume that $\calk^?_*$ is an 
equivariant homology theory in the sense of~\cite{lueck}.  
Before stating the main result of this section, we require one more 
definition.  For $G$ a discrete group, a $G$-CW-complex $E$ is said to 
be a model for $\ebar G$ if all stabilizer subgroups for $E$ are
finite and if for every finite subgroup $K\leq G$, the fixed point 
set $E^K$ is contractible.  It is easily shown that such an $E$ is 
unique up to equivariant homotopy equivalence.  

\begin{theorem} \label{thm:genequiv}
Let $G$ act on $X\in \calc$, so that the stabilizer of each 
simplex is finite.  Assume also that $X/G$ is connected.  
There is a group $\tilg$, a group homomorphism
$\alpha:\tilg\rightarrow G$ with $\ker(\alpha)$ torsion-free, 
and a $\tilg$-equivariant map $t:\ebar
\tilg\rightarrow X$ such that for any equivariant homology 
theory $\calk^?_*$, the map 
\[t_*:\calk^\tilg_*(\ebar \tilg)\rightarrow \calk^G_*(X)\] 
is an isomorphism.  In the case when $X$ is connected, the
homomorphism $\alpha$ is surjective.  
\end{theorem}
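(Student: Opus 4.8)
The plan is to build $\tilg$ as a group of homeomorphisms covering the $G$-action, mimicking the construction already sketched in the introduction for the case of actions with finite stabilizers. First I would apply the functor $T$ of Theorem~\ref{thm:deltaversion} to $X$; by naturality of the construction, the $G$-action on $X$ induces a $G$-action on $T_X$ by cubical isometries, and the map $t_X:T_X\rightarrow X$ is $G$-equivariant. Since $X/G$ is connected and $t_X$ induces an isomorphism of fundamental groups of quotients (after passing to $T_X/\langle\tau\rangle$), I would pass to the universal cover. Let $E$ denote the universal cover of $T_X$, and let $\tilg$ be the group of all homeomorphisms of $E$ that lift the action of $G$ on $T_X$; there is a natural homomorphism $\alpha:\tilg\rightarrow G$ whose kernel is $\pi_1(T_X)$. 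A point-set argument shows $\tilg$ acts on $E$ with the lift of the $G$-action, and since $G$ acts on $X$ with finite stabilizers and $t_X^{-1}$ of a point is built from finitely many copies of the acyclic complexes, the stabilizers of the $\tilg$-action on $E$ are finite. By Theorem~\ref{ebarg}, a group acting with finite stabilizers on a CAT(0) cubical complex has contractible fixed point sets for every finite subgroup (the fixed set is non-empty, convex, hence contractible), so $E$ is a model for $\ebar\tilg$.

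The second main step is to produce the equivariant map and verify the homology isomorphism. The composite $E\rightarrow T_X\xrightarrow{t_X} X$ is the required map $t:\ebar\tilg\rightarrow X$; it is $\tilg$-equivariant when $X$ is regarded as a $\tilg$-space via $\alpha$. To check that $t_*:\calk^{\tilg}_*(\ebar\tilg)\rightarrow\calk^G_*(X)$ is an isomorphism, I would factor it. The covering map $E\rightarrow T_X$ has the property that $E = \ind_\alpha(E)$ in the appropriate sense — more precisely, $T_X \cong \tilg\backslash$-quotient data should let me identify $\calk^{\tilg}_*(E)$ with $\calk^G_*(T_X)$ using the induction isomorphism along $\alpha$, since $\ker(\alpha)=\pi_1(T_X)$ acts freely on $E$. (One must be a little careful: the induction axiom gives an isomorphism $\calk^{\tilg}_*(E)\rightarrow\calk^G_*(\ind_\alpha E)$ when $\ker\alpha$ acts freely on $E$, and $\ind_\alpha E$ is $G$-equivariantly homeomorphic to $T_X$.) Then the $G$-equivariant map $t_X:T_X\rightarrow X$ induces an isomorphism $\calk^G_*(T_X)\rightarrow\calk^G_*(X)$ by the proposition on $G$-homology theories proved just above in this section. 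Composing these two isomorphisms gives the claim.

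The surjectivity of $\alpha$ in the connected case is the easy part: when $X$ is connected, $T_X$ is connected, so any element $g\in G$, acting as a homeomorphism of $T_X$, lifts to a homeomorphism of the universal cover $E$ (path-lifting from a basepoint), hence lies in the image of $\alpha$. More precisely, by the naturality statement $T_X/\langle\tau\rangle\rightarrow X$ is a homotopy equivalence, so $\pi_1$ surjects appropriately and the standard covering-space argument applies.

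The step I expect to be the main obstacle is the careful identification $\calk^{\tilg}_*(E)\cong\calk^G_*(T_X)$ via the induction map $\ind_\alpha$, because the equivariant homology theory's induction isomorphism is stated for the specific construction $\ind_\alpha(Z)=G\times_H Z$, and one must verify that $T_X$ with its $G$-CW-structure really is $G$-homeomorphic to $\ind_\alpha(E)$ where $E$ carries the $\tilg$-CW-structure pulled back from $T_X$; this is where the freeness of the $\ker(\alpha)$-action on $E$ and the compatibility of cell structures must be checked honestly. Once that identification is in hand, the rest is assembling previously established facts.
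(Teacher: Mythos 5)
Your argument in the connected case is exactly the paper's: form $T_X$ with its $G$-action, take the universal cover $E$, let $\tilg$ be the group of lifts of the $G$-action with $\ker(\alpha)$ the deck group, check that the $\tilg$-stabilizers are finite so that Theorem~\ref{ebarg} makes the CAT(0) cubical complex $E$ a model for $\ebar\tilg$, and factor $t_*$ as the induction isomorphism $\calk^\tilg_*(E)\cong\calk^G_*(\ind_\alpha(E))=\calk^G_*(T_X)$ (valid because $\ker(\alpha)$ acts freely on $E$) followed by the isomorphism $\calk^G_*(T_X)\rightarrow\calk^G_*(X)$ from the proposition proved just before the theorem. The point you flag as the main obstacle is simply the identification $\ind_\alpha(E)\cong E/\ker(\alpha)=T_X$, which the paper also uses without further elaboration; it is not where the difficulty lies.

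The genuine gap is at the start: the theorem assumes only that $X/G$ is connected, not that $X$ is. When $X$ is disconnected (with $G$ permuting its components), $T_X$ is disconnected as well, since $T_X/\langle\tau\rangle\simeq X$; so ``the universal cover of $T_X$'', the lifting group $\tilg$, and your path-lifting argument for surjectivity are not defined as stated -- your appeal to connectivity of $X/G$ does not give connectivity of $T_X$. Indeed the theorem signals this by asserting surjectivity of $\alpha$ only when $X$ is connected. The missing step is the paper's preliminary reduction: choose a component $X_0$ of $X$, let $G_0\leq G$ be its setwise stabilizer and $\beta:G_0\rightarrow G$ the inclusion; then $X$ is $G$-homeomorphic to $\ind_\beta(X_0)$ and, since $\ker(\beta)$ is trivial (so acts freely), the induction axiom gives an isomorphism $\ind_\beta:\calk^{G_0}_*(X_0)\rightarrow\calk^G_*(X)$. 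One then runs your construction for the connected $G_0$-complex $X_0$ and takes $\alpha$ to be the composite $\tilg\rightarrow G_0\leq G$. A secondary quibble: your reason for finiteness of the $\tilg$-stabilizers (``$t_X^{-1}$ of a point is built from finitely many acyclic pieces'') is not the relevant point; what one uses is that a deck transformation fixing a point of $E$ is trivial, so the stabilizer of $e\in E$ in $\tilg$ maps injectively under $\alpha$ into the $G$-stabilizer of the image point in $X$, which is finite because simplex stabilizers are finite.
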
 

\begin{proof} 
First, suppose that $X$ is not connected.  In this case pick $X_0$, a
component of $X$, and let $G_0$ be the setwise stabilizer of $X_0$.  
If $\beta$ denotes the inclusion homomorphism $\beta:G_0\rightarrow
G$, then $X$ is equivariantly homeomorphic to $\ind_\beta(X_0)$, 
and $\ind_\beta:\calk^{G_0}_*(X_0)\rightarrow \calk^G_*(X)$ is 
an isomorphism.  Hence it suffices to consider the case when $X=X_0$ 
is connected.  

In the connected case, consider $t_X:T_X\rightarrow X$.  By
naturality the group $G$ acts as a group of automorphisms of $T_X$.  
Let $E$ be the universal covering space of $T_X$, define 
$t:E\rightarrow X$ to be the composite of the projection map
$E\rightarrow T_X$ and $t_X:T_X\rightarrow X$, and let $\tilg$ be the 
group of all self-maps of $E$ that lift the action of $G$ on $T_X$.  
By construction, there is a surjective homomorphism 
$\alpha:\tilg\rightarrow G$ with $\ker(\alpha)$ equal to the group
of deck transformations of the covering $E\rightarrow T_X$.  By the 
previous proposition, the map $\calk^G_*(T_X)\rightarrow \calk^G_*(X)$
is an isomorphism.  Since the group of deck transformations acts
freely on $E$, it follows that $\ind_\alpha(E)\cong
E/\ker(\alpha)=T_X$ and that the map $\ind_\alpha:\calk^\tilg_*(E)\rightarrow 
\calk^G_*(X)$ is defined and is an isomorphism.  
But $E$ is a CAT(0) cubical complex, 
and $\tilg$ acts cellularly on $E$ with finite stabilizers.  It
follows from Theorem~\ref{ebarg} that $E$ is a model for 
$\ebar \tilg$.  Thus if we put $t=t_X\circ\ind_\alpha:E\rightarrow X$
we get the claimed result.  
\end{proof}

\section{Borel equivariant cohomology} 
\label{sec:ten}

Consider the Borel equivariant cohomology of a $G$-CW-complex $X$, 
defined as $H^*_G(X):= H^*(EG\times_G X)$.  Note that in the case 
when $X$ is contractible, this is just the cohomology $H^*(G)$ of 
the group $G$.  In general, by the simplicial 
approximation theorem, we may assume that $X$ and the $G$-action 
on $X$ are in $\calc$.  If all stabilizers in $X$ are finite and 
$X/G$ is connected, then with the notation of
Theorem~\ref{thm:genequiv}, we see that there is a chain of isomorphisms
$$t^*:H^*_G(X)\rightarrow H^*_\tilg(\ebar\tilg)\cong 
H^*(\tilg).$$
In particular, for any such $G$ and $X$, there is a group $\tilg$ 
for which the group cohomology ring $H^*(\tilg)$ is isomorphic to 
the Borel equivariant cohomology ring $H^*_G(X)$.  

Let $p$ be a prime, and let $\ff_p$ be the field of $p$ elements.  In
theorem~14.1 of \cite{quillen}, Quillen described, up to
F-isomorphism, the cohomology ring $H^*(\Gamma;\ff_p)$ of any discrete
group $\Gamma$ of finite virtual cohomological dimension over $\ff_p$.
In part~I of~\cite{quillen}, Quillen had already described, up to
F-isomorphism, the Borel equivariant cohomology ring
$H^*_G(X;\ff_p)$ for any finite $G$ and finite-dimensional
$G$-CW-complex $X$.  The proof of theorem~14.1, contained in sections
15~and~16 of~\cite{quillen}, involved exhibiting an isomorphism
between $H^*(\Gamma;\ff_p)$ and $H^*_G(X)$ for some finite $G$ and
suitable $X$.  In the case when $\Gamma$ has finite virtual
cohomological dimension over $\zz$, $G$ may be taken to be $\Gamma/N$
for any finite-index torsion-free normal subgroup $N$, and $X$ may be
taken to be $\ebar \Gamma/N$.  (Note that a group of finite vcd over $\zz$ 
necessarily has finite vcd over $\ff_p$, but the converse does not 
always hold.)  

One corollary of Theorem~\ref{thm:genequiv} is that this process can
be reversed: if we take as given Quillen's description of
$H^*(\Gamma;\ff_p)$ for all groups $\Gamma$ of finite vcd over the 
integers, then we can deduce Quillen's description of the Borel
equivariant cohomology ring $H^*_G(X;\ff_p)$ for all finite groups $G$
and all finite-dimensional $G$-CW-complexes $X$.  Moreover, we can 
deduce a theorem not stated in~\cite{quillen} by similar 
methods.  To justify this assertion, we need to define transport categories.  

Let $X$ be a $G$-CW-complex with all stabilizers finite.  
Following~\cite{quillen}, we define the transport category 
$\calt(G,X)$.  
The objects of $\calt(G,X)$ are pairs $(H,c)$, where $H$ is a finite
subgroup of $G$ and $c$ is a component of the $H$-fixed point set 
$X^H$.  The morphisms from $(H,c)$ to $(H',c')$ are the group
homomorphisms $H\rightarrow H'$ of the form $h\mapsto ghg^{-1}$ 
for some $g\in G$ such that $g.c$ is contained in~$c'$.  In the 
case when $X=\ebar G$, the transport category $\calt(G,\ebar G)$ 
is isomorphic to the Frobenius category $\Phi(G)$.  The objects of 
$\Phi(G)$ are the finite subgroups of $G$, with morphisms the group
homomorphisms that are induced by inner automorphisms of $G$. 
For $p$ a prime, let $\cala_p(G,X)$ denote the full subcategory 
of $\calt(G,X)$ with objects the pairs $(A,c)$ such that $A$ is 
abelian of exponent~$p$.  Quillen's description of $H^*_G(X;\ff_p)$, 
for $G$ finite, is as an inverse limit over $\cala_p(G,X)$.  

\begin{proposition} With notation as in the statement of
  Theorem~\ref{thm:genequiv}, the map $t:\ebar\tilg \rightarrow X$ 
induces an equivalence from the Frobenius category $\Phi(\tilg)$ to 
the transport category $\calt(G,X)$.  
\end{proposition}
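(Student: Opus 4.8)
Write $F\colon\Phi(\tilg)\to\calt(G,X)$ for the functor induced by $t$. The plan is to exploit the factorisation $t=t_X\circ p$, where $p\colon E\to T_X$ is the universal cover and $t_X\colon T_X\to X$ is the $G$-map of Theorem~\ref{thm:deltaversion}: identifying $\Phi(\tilg)$ with $\calt(\tilg,E)$ (legitimate because $E=\ebar\tilg$, so $E^K$ is contractible, hence connected, for every finite $K\le\tilg$), $F$ becomes $F_2\circ F_1$, with $F_1$ induced by $p$ together with $\alpha\colon\tilg\to G$ and $F_2$ induced by $t_X$. I would show that $F_2$ is an isomorphism of categories and $F_1$ an equivalence. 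Three preliminaries are needed. (a) With $N=\ker\alpha$, the group $N$ is the deck group of $E\to T_X$, so it acts freely on the CAT(0) cube complex $E$; by Theorem~\ref{ebarg} it has no non-trivial finite subgroup, hence is torsion-free, $\alpha$ is injective on every finite subgroup of $\tilg$, and in fact $\alpha\colon\mathrm{Stab}_{\tilg}(v)\to\mathrm{Stab}_G(p(v))$ is an isomorphism for every cell $v$ of $E$ (surjectivity: a lift of a given stabiliser element moves $v$ inside the single $N$-orbit $p^{-1}(p(v))$, so can be corrected by an element of $N$). (b) For finite $K\le\tilg$, $E^K$ is non-empty (Theorem~\ref{ebarg}) and connected. (c) For finite $\bar K\le G$ one has $T_X^{\bar K}=T_{X^{\bar K}}$, and $t_X$ restricts on it to $t_{X^{\bar K}}\colon T_{X^{\bar K}}\to X^{\bar K}$, which is a homology isomorphism by Theorem~\ref{thm:deltaversion}, hence a $\pi_0$-bijection; for (c), the earlier proposition $t_X^{-1}(Y)=T_Y$ and equivariance of $t_X$ give $T_X^{\bar K}\subseteq T_{X^{\bar K}}$, while applying the functor $T$ to the $\bar K$-equivariant inclusion $X^{\bar K}\hookrightarrow X$ (trivial $\bar K$-action on the source; one first subdivides so that $X^{\bar K}$ is a subcomplex) shows $\bar K$ acts trivially on $T_{X^{\bar K}}$, giving the reverse inclusion.

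Given (c), $F_2$ sends $(\bar K,\bar c)$ to $(\bar K,t_X(\bar c))$, and since $t_X$ is a $\bar K$-equivariant $\pi_0$-bijection on fixed sets for every finite $\bar K$, a routine check shows $F_2$ is bijective on objects and on hom-sets, hence a category isomorphism. The functor $F_1$ sends $K$ to $(\alpha(K),\bar c_K)$, where $\bar c_K$ is the component of $T_X^{\alpha(K)}$ meeting $p(E^K)$ (well-defined by (a) and (b)), and a morphism $\mathrm{conj}_g\colon K\to K'$ to $\mathrm{conj}_{\alpha(g)}$; this is a morphism of $\calt(G,T_X)$ because $E^{K'}\subseteq E^{gKg^{-1}}=gE^K$ and $pg=\alpha(g)p$ put $\bar c_{K'}$ into the component $\alpha(g)\cdot\bar c_K$. $F_1$ is faithful because morphisms in both categories are simply group homomorphisms and $\alpha$ is injective on the relevant finite subgroups. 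It is essentially surjective: given $(\bar K,\bar c)$, take a vertex $\bar v\in\bar c$, lift it to a vertex $v$ of $E$, and set $K=(\alpha|_{\mathrm{Stab}_{\tilg}(v)})^{-1}(\bar K)$; then $K$ fixes $v$, so $p(E^K)\ni\bar v$ and $F_1(K)=(\bar K,\bar c)$.

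It remains to prove $F_1$ is full, which I would reduce to the assertion: \emph{for each finite $\bar L\le G$, the map $L\mapsto\bar c_L$ is a bijection from the $N$-conjugacy classes of finite subgroups $L\le\tilg$ with $\alpha(L)=\bar L$ onto $\pi_0(T_X^{\bar L})$.} Granting this, suppose $\psi\colon F_1(K)\to F_1(K')$ is realised by $\bar g\in G$ with $\bar g\alpha(K)\bar g^{-1}\subseteq\alpha(K')$ and $\bar c_{K'}$ contained in $\bar g\cdot\bar c_K$. Choose a lift $g_0$ of $\bar g$ and put $L=g_0Kg_0^{-1}$, so $\alpha(L)\subseteq\alpha(K')$ and $\bar c_L=\bar g\cdot\bar c_K\supseteq\bar c_{K'}$. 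The subgroup $L_0=(\alpha|_{K'})^{-1}(\alpha(L))\le K'$ satisfies $\alpha(L_0)=\alpha(L)=:\bar L$ and $E^{K'}\subseteq E^{L_0}$, so $\bar c_{L_0}$ is the component of $T_X^{\bar L}$ containing $\bar c_{K'}\subseteq\bar c_L$, that is $\bar c_{L_0}=\bar c_L$. By injectivity in the assertion there is $n\in N$ with $nLn^{-1}=L_0\le K'$; then $g=ng_0$ gives $gKg^{-1}\le K'$ and $\alpha(g)=\bar g$, so $\mathrm{conj}_g$ is a morphism of $\Phi(\tilg)$ with $F_1(\mathrm{conj}_g)=\psi$.

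Surjectivity in the assertion is the essential-surjectivity argument above; injectivity is the heart of the matter, and I expect it to be the only genuinely delicate step. Suppose finite $L,L'\le\tilg$ both map to $\bar L$ and $\bar c_L=\bar c_{L'}=\bar c$. Then $E^L$ and $E^{L'}$ are connected and map into $\bar c$, so each lies in a single component of $p^{-1}(\bar c)$; since $E$ is the universal cover of $T_X$, the deck group $N$ permutes $\pi_0(p^{-1}(\bar c))$ transitively, so after replacing $L'$ by an $N$-conjugate I may assume $E^L$ and $E^{L'}$ lie in the same component $\tilde c$, which $L$ and $L'$ both stabilise, each having a fixed point in it. Now $\bar L$ fixes $\bar c$ pointwise (because $\bar c\subseteq T_X^{\bar L}$), so $\alpha^{-1}(\bar L)$ preserves $p^{-1}(\bar c)$ and every element of $\alpha^{-1}(\bar L)$ stabilising $\tilde c$ acts on $\tilde c$ covering the identity of $\bar c$, that is, as a deck transformation of the connected covering $\tilde c\to\bar c$. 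A deck transformation of a connected covering with a fixed point is the identity, so $L$ and $L'$ fix $\tilde c$ pointwise; and $\alpha$ is injective on the pointwise stabiliser of $\tilde c$ (an element of $N$ fixing the non-empty set $\tilde c$ is trivial, $N$ acting freely), whence $\alpha(L)=\alpha(L')$ forces $L=L'$. This completes the proof that $F_1$ is fully faithful and essentially surjective, hence an equivalence, so $F=F_2\circ F_1$ is an equivalence of categories. The one place I anticipate real care is this final injectivity argument, and within it the decisive observation is that $\bar L$ fixes the component $\bar c$ of $T_X^{\bar L}$ pointwise, which converts the a priori loose relation between $L$ and $L'$ into a statement about deck transformations.
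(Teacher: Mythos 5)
Your proof is correct, and it is substantially more detailed than, and organized differently from, the paper's own argument, which is only a brief sketch: the paper observes that finite subgroups of $\tilg$ map isomorphically to finite subgroups of $G$ (torsion-free kernel), constructs from an object $(H,c)$ of $\calt(G,X)$ a finite subgroup $H'\le\tilg$ by lifting $t_X^{-1}(c)$ to $E$ and pulling $H$ back into a point stabilizer, and notes that different lifts yield conjugate objects of $\Phi(\tilg)$; the verification that the functor is full and faithful, and even the fixed-point identity needed to see that points of $t_X^{-1}(c)$ are $H$-fixed, are left implicit. Your factorization $F=F_2\circ F_1$ through the intermediate transport category $\calt(G,T_X)$ supplies all of this and isolates where each hypothesis enters: the homology isomorphisms of Theorem~\ref{thm:deltaversion} are used only through $\pi_0$-bijections on fixed sets, via $T_X^{\bar K}=T_{X^{\bar K}}$, which you derive correctly from functoriality together with $t_X^{-1}(Y)=T_Y$, while the CAT(0) input (Theorem~\ref{ebarg}) enters only through non-emptiness and connectedness of $E^K$. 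Your fullness argument --- transitivity of $N$ on components of $p^{-1}(\bar c)$, plus the observation that $\bar L$ fixes $\bar c$ pointwise, so that elements over $\bar L$ stabilizing a lift $\tilde c$ act as deck transformations of $\tilde c\to\bar c$ and hence, having fixed points, fix $\tilde c$ pointwise --- is exactly the step the paper's final sentence glosses over, and it is sound; it is what converts ``same component downstairs'' into ``equal subgroups inside the pointwise stabilizer of $\tilde c$'', where injectivity of $\alpha$ is available.

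Two small remarks. Your parenthetical about subdividing so that $X^{\bar K}$ becomes a subcomplex is unnecessary, and it is good that it is: subdividing $X$ in mid-proof would change $T_X$, hence $E$ and $\tilg$. In the category $\calc$ of $\Delta$-complexes an automorphism that preserves a simplex fixes it pointwise, so $X^{\bar K}$ is automatically a subcomplex and no subdivision is needed. Also, you read the morphism condition in $\calt(G,X)$ as $c'\subseteq g.c$; this is the intended (Quillen) convention, and it is what makes $\calt(G,\ebar G)\cong\Phi(G)$ as the paper asserts, even though the paper's wording literally states the reverse inclusion.
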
 

\begin{proof} Since the kernel of the map $\alpha:\tilg\rightarrow G$
  is torsion-free, any finite subgroup of $\tilg$ maps isomorphically 
to a finite subgroup of $G$.  Let $(H,c)$ be any object in
$\calt(G,X)$, 
let $T_c$ be the inverse image of $c$ in $T_X$, and let $c'$ be any 
lift of $T_c$ to $E$, the universal cover of $T_X$.  For any 
$x\in c'$, there is a subgroup $H'$ of $\tilg$ which fixes $x$ and maps
isomorphically to $H$.  If $c''$ is another lift of $c$, and $H''$ is 
defined similarly to $H'$, then there is an element $g\in \tilg$ such 
that $g.c'=c''$ and $gH'g^{-1}=H''$, and so $(H',c')$ is isomorphic to 
$(H'',c'')$ as objects of $\Phi(\tilg)$.  
\end{proof} 

\begin{corollary}
\label{cor:twentytwo}
For any virtually torsion-free group $G$ and any finite-dimen\-sion\-al 
$G$-CW-complex $X$ with finite stabilizer subgroups, the natural map 
$$H^*_G(X;\ff_p)\rightarrow \inverselim_{(A,c)\in \cala_p(G,X)}
H^*(A;\ff_p)$$ 
is a uniform F-isomorphism.  
\end{corollary}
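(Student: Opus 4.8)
The plan is to deduce Corollary~\ref{cor:twentytwo} from Quillen's description of the mod-$p$ cohomology of a virtually torsion-free group together with Theorem~\ref{thm:genequiv} and the preceding proposition. First I would reduce to the case where $X/G$ is connected: if $X/G$ is disconnected, write $X$ as an equivariant disjoint union (really an induced union) of pieces whose quotients are connected, and observe that both sides of the displayed map decompose compatibly as products over the components, so it suffices to treat one piece at a time. This is the same bookkeeping as in the proof of Theorem~\ref{thm:genequiv}, and the naturality of Quillen's inverse-limit description handles the category side.

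Next, assuming $X/G$ connected, apply Theorem~\ref{thm:genequiv} to obtain the group $\tilg$, the homomorphism $\alpha:\tilg\rightarrow G$ with torsion-free kernel, and the equivariant map $t:\ebar\tilg\rightarrow X$ inducing an isomorphism on every equivariant homology theory; dually (via the discussion opening Section~\ref{sec:ten}, which gives $H^*_G(X)\cong H^*_\tilg(\ebar\tilg)\cong H^*(\tilg)$ as rings) we get a ring isomorphism $t^*:H^*_G(X;\ff_p)\rightarrow H^*(\tilg;\ff_p)$. The key point is that $\tilg$ is itself virtually torsion-free: it has a torsion-free normal subgroup, namely the kernel of the composite $\tilg\rightarrow G\rightarrow G/N$ where $N$ is a torsion-free finite-index normal subgroup of $G$ (the kernel of $\alpha$ being torsion-free, and an extension of a torsion-free group by a torsion-free group being torsion-free), and this subgroup has finite index because $G/N$ is finite. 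Hence Quillen's theorem~14.1 of~\cite{quillen} applies to $\tilg$: the natural map $H^*(\tilg;\ff_p)\rightarrow \inverselim_{A\in\cala_p(\tilg)} H^*(A;\ff_p)$ is a uniform F-isomorphism, where $\cala_p(\tilg)$ is the full subcategory of the Frobenius category $\Phi(\tilg)$ on the elementary abelian $p$-subgroups.

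Finally I would assemble the square. The preceding proposition says $t$ induces an equivalence of categories $\Phi(\tilg)\to\calt(G,X)$; restricting to elementary abelian $p$-subgroups this gives an equivalence $\cala_p(\tilg)\to\cala_p(G,X)$, and under this equivalence the functor $A\mapsto H^*(A;\ff_p)$ on the $\tilg$-side matches the functor $(A,c)\mapsto H^*(A;\ff_p)$ on the $X$-side, because the isomorphism $H'\to H$ identifying objects is compatible with the restriction maps. Therefore $\inverselim_{\cala_p(\tilg)} H^*(A;\ff_p)\cong\inverselim_{\cala_p(G,X)}H^*(A;\ff_p)$, and this isomorphism is compatible with the restriction maps from $H^*(\tilg;\ff_p)$ and $H^*_G(X;\ff_p)$ respectively, since $t^*$ is natural for restriction to subgroups and their fixed sets. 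Chasing the resulting commutative square and using that $t^*$ is an isomorphism of rings shows the right-hand vertical map is a uniform F-isomorphism precisely because the left-hand one (Quillen's theorem) is. The main obstacle I anticipate is verifying carefully that $\tilg$ is virtually torsion-free so that Quillen's theorem genuinely applies — everything else is naturality and transport of structure along the categorical equivalence — and secondarily making sure the finite-dimensionality hypothesis on $X$ is used correctly (it guarantees $\tilg$ has finite vcd, which Quillen's theorem~14.1 requires over $\ff_p$).
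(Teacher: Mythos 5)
Your proposal is correct and follows essentially the same route as the paper: reduce to $X/G$ connected, apply Theorem~\ref{thm:genequiv} and the proposition identifying $\Phi(\tilg)$ with $\calt(G,X)$, check that $\tilg$ has finite virtual cohomological dimension (your verification of virtual torsion-freeness plus the free action of the resulting finite-index subgroup on the finite-dimensional contractible cover supplies this, and is in fact more detailed than the paper's one-line assertion), and invoke Quillen's Theorem~14.1. The only caution is that virtual torsion-freeness alone does not suffice for Quillen's theorem — it is the finite vcd, which you correctly note comes from the finite-dimensionality of $X$ — so the "Hence Quillen's theorem applies" should be keyed to that point rather than to virtual torsion-freeness.
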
 

\begin{proof} 
The statement easily reduces to the case when $X/G$ is connected.  In
this case, by Theorem~\ref{thm:genequiv}, there exists a discrete
group $\tilg$, a homomorphism $\alpha:\tilg\rightarrow G$ and a
$\tilg$-equivariant map $t:\ebar\tilg\rightarrow X$ inducing an
isomorphism from $H^*(\tilg;\ff_p)$ to $H^*_G(X;\ff_p)$.  The map $t$
also induces an equivalence of categories from $\Phi(\tilg)$ to
$\calt(G,X)$.  Since $G$ is virtually torsion-free and $X$ is
finite-dimensional, it follows that $\tilg$ is of finite virtual
cohomological dimension.  The result now follows from Theorem~14.1
of~\cite{quillen}.
\end{proof}

\section{Assembly conjectures} 
\label{sec:ele}

A family $\calf$ of subgroups of a group $G$ is a non-empty collection
of subgroups which is closed under conjugation and taking subgroups,
i.e., for any $H\in \calf$, any $g\in G$ and any $K\leq H$,
$gHg^{-1}\in \calf$ and $K\in \calf$.  Examples include the family 
of all subgroups of $G$, the family of finite subgroups, and the 
family consisting of just the trivial subgroup.  

A model for $E(\calf,G)$, the classifying space for actions of $G$
with stabilizers in $\calf$, is a $G$-CW-complex $E$ such that each
cell stabilizer is in $\calf$ and such that for any $H\in \calf$, the
$H$-fixed point set $E^H$ is contractible.  It can be shown that 
for any $G$ and $\calf$ there exist models for $E(\calf,G)$, and 
that for fixed $G$ and $\calf$ any two such models are equivariantly 
homotopy equivalent.  In the case when $\calf$ is the family of all 
subgroups of $G$, a single point $\pt$ is a model for $E(\calf,G)$.  In 
the case when $\calf$ is just the trivial subgroup, a model for 
$E(\calf,G)$ is the same thing as a model for $EG$, the universal 
free $G$-CW-complex.  In the case when $\calf$ is the family of 
finite subgroups of $G$, a model for $E(\calf,G)$ is the same thing
as a model for $\ebar G$, as defined just above the statement of 
Theorem~\ref{thm:genequiv}.  If $G$ acts cellularly on a CAT(0) cubical
complex in such a way that all cell stabilizers are finite, then 
from Theorem~\ref{ebarg} it follows that $X$ is a model for $\ebar
G$.  

Let $\calk^?_*$ be an equivariant homology theory in the sense
of~\cite{lueck}, let $G$ be a group, and let $\calf$ be a family 
of subgroups of $G$.  The $(\calk^?_*,\calf,G)$ assembly conjecture
states that the map $E(\calf,G)\rightarrow \pt$ induces an isomorphism
from $\calk^G_*(E(\calf,G))$ to $\calk^G_*(\pt)$.  

For example, if $\calf$ is the family of all subgroups, then for any
$\calk^?_*$ and any group $G$, the assembly conjecture holds.  This 
example illustrates the principle that assembly conjectures are
stronger when they involve \emph{smaller} families of groups.  One way
to look at an assembly conjecture is as saying that $\calk^G_*(\pt)$ is 
determined by knowledge of $\calk^H_*(\pt)$ for groups $H\in \calf$, 
together with knowledge of how the $\calf$-subgroups of $G$ fit
together (embodied in the structure of $E(\calf,G)$).  

Another 
example of an assembly conjecture that holds for trivial reasons comes 
from Borel equivariant homology.  For any contractible $G$-CW-complex 
$E$, one has that $EG\times E$ is a free, contractible $G$-CW-complex.  
Hence the assembly conjecture for Borel equivariant homology is valid 
for any $G$ and any family $\calf$ of subgroups of $G$.

For the remainder of this section, let $\frakf$ denote the family 
of finite subgroups of a group, so that for any $G$ a model for 
$E(\frakf,G)$ is the same thing as a model for $\ebar G$.  Now 
suppose that $\calk^?_*$ is a generalized equivariant homology 
theory in the sense of~\cite{luecksur}
such that the $(\calk^?_*,\frakf,H)$ assembly conjecture 
holds whenever $H$ is a CAT(0) cubical group, i.e., whenever $H$ 
admits a cocompact cubical action on a CAT(0) cubical complex 
with stabilizers in $\frakf$.  Suppose further that $\calk^?_*$ is 
\emph{continuous} in the sense that for any group $G$, and any 
expression for $G$ as a directed union $G=\cup_i G_i$, one 
has that the natural map 
\[\lim_i\calk^{G_i}_*(\pt)\rightarrow \calk^G_*(\pt)\] 
is an isomorphism~\cite{bartechtlueck}.

\begin{theorem} \label{thm:assembly}
  For a group $G$ and a model $X$ for $\ebar G$, define
  a group $\tilg$ and a homomorphism $\alpha:\tilg\rightarrow G$ as in
  the statement of Theorem~\ref{thm:genequiv}.  For a generalized
  equivariant homology theory $\calk^?_*$ satisfying the hypotheses
  listed in the previous paragraph, the following statements are
  equivalent:  
\begin{itemize} 

\item The $(\calk^?_*,\frakf,G)$ assembly conjecture holds

\item The map $\ind_\alpha: \calk^\tilg_*(\pt)\rightarrow 
\calk^G_*(\pt)$ is an isomorphism

\end{itemize} 
\end{theorem}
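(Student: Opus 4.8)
The plan is to reduce everything to the commutative square of assembly maps produced by Theorem~\ref{thm:genequiv}; the only substantive point will be to show that the $(\calk^?_*,\frakf,\tilg)$ assembly conjecture holds automatically.

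First I would fix the data. By the simplicial approximation theorem we may assume $X$ and the $G$-action lie in $\calc$. Since $X=\ebar G$ is contractible it is connected, hence so is $X/G$, and so Theorem~\ref{thm:genequiv} applies directly to the action of $G$ on $X$: it produces the surjection $\alpha\colon\tilg\to G$ and a $\tilg$-equivariant map $t\colon\ebar\tilg\to X$ with $t_*\colon\calk^\tilg_*(\ebar\tilg)\to\calk^G_*(X)$ an isomorphism for every equivariant homology theory. Recall from the proof of Theorem~\ref{thm:genequiv} that $\ebar\tilg$ is the universal cover $E$ of $T_X$ — a CAT(0) cubical complex on which $\tilg$ acts properly — and that $t_*$ is the composite $\calk^\tilg_*(E)\xrightarrow{\ind_\alpha}\calk^G_*(T_X)\to\calk^G_*(X)$, where the first map is the induction isomorphism (an isomorphism because $\ker\alpha$ acts freely on $E$) and the second is induced by $t_X$.

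Next I would set up the square. The collapse maps $\ebar\tilg\to\pt$ and $X=\ebar G\to\pt$ are compatible with $t$, and each of the two factors of $t_*$ is natural with respect to maps to $\pt$ (for $\ind_\alpha$ one uses that $\alpha$ is surjective, so $\ind_\alpha(\pt)=\pt$ and the induced map on $\calk$ is $\alpha_*$). Hence there is a commutative square
\[
\begin{array}{ccc}
\calk^\tilg_*(\ebar\tilg)&\longrightarrow&\calk^\tilg_*(\pt)\\[4pt]
\cong\ \downarrow\,t_*& &\downarrow\,\alpha_*\\[4pt]
\calk^G_*(\ebar G)&\longrightarrow&\calk^G_*(\pt)
\end{array}
\]
whose top row is the $(\calk^?_*,\frakf,\tilg)$ assembly map and whose bottom row is the $(\calk^?_*,\frakf,G)$ assembly map. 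Granting that the top row is an isomorphism, then — $t_*$ being an isomorphism — the bottom row is an isomorphism if and only if $\alpha_*$ is, which is precisely the asserted equivalence. So the whole theorem comes down to proving the $(\calk^?_*,\frakf,\tilg)$ assembly conjecture.

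To prove that conjecture I would write $\tilg$ as a directed union of cocompact CAT(0) cubical groups. Exhaust $X$ by an increasing (more generally, directed) family of $G$-invariant subcomplexes $W_n$ with each $W_n/G$ finite, obtained by pulling back an exhaustion of $X/G$ by finite subcomplexes; then $T_{W_n}$ is, by Theorem~\ref{thm:deltaversion} and the union property in Theorem~A, a $G$-invariant totally geodesic subcomplex of $T_X$ with $T_{W_n}/G$ finite and $\bigcup_n T_{W_n}=T_X$. Being totally geodesic in the locally CAT(0) complex $T_X$, $T_{W_n}$ has preimage in $E$ all of whose components are convex subcomplexes, hence are themselves CAT(0) cubical complexes. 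Choose components $C_0\subseteq C_1\subseteq\cdots$ with $C_n$ the component of the preimage of $T_{W_n}$ containing $C_{n-1}$, and put $\tilg_n=\mathrm{Stab}_{\tilg}(C_n)$. A routine check shows that the $\tilg_n$ are nested with union $\tilg$ (because $\bigcup_n C_n=E$ and any path between two points of $E$ lies in some $C_n$, so any $g\in\tilg$ has $gC_0\subseteq C_n$, hence $gC_n=C_n$, for $n$ large) and that $\tilg_n$ acts properly and cocompactly on $C_n$ (cocompactly because the $\tilg$-orbits of cells of $C_n$ inject into the $\tilg$-orbits of cells of the preimage of $T_{W_n}$, which is $T_{W_n}/G$, a finite complex). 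Thus each $\tilg_n$ is a CAT(0) cubical group, so the $(\calk^?_*,\frakf,\tilg_n)$ assembly conjecture holds by hypothesis. Since $\calk^?_*$ is continuous and $\tilg$ is the directed union of the $\tilg_n$, the standard inheritance of isomorphism conjectures under directed colimits of groups~\cite{bartechtlueck} then gives the $(\calk^?_*,\frakf,\tilg)$ assembly conjecture: one takes the filtered colimit of the $\tilg$-spaces $\ind_{\iota_n}(\ebar\tilg_n)$, $\iota_n\colon\tilg_n\hookrightarrow\tilg$, as a model for $\ebar\tilg$, and uses the induction isomorphisms for the $\iota_n$ together with continuity of $\calk^?_*$ on the point to identify the $\tilg$-assembly map with the colimit of the $\tilg_n$-assembly maps.

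The main obstacle is exactly this last step. Everything else is formal bookkeeping with the square; the real work is to manufacture the exhausting chain of genuinely \emph{cocompact} CAT(0) cubical subgroups $\tilg_n\le\tilg$ — which is where the totally geodesic/convex structure of the subcomplexes $T_{W_n}\subseteq T_X$ is indispensable, since $\tilg$ itself need not act cocompactly on $E$ — and then to invoke continuity of $\calk^?_*$ to pass to the colimit.
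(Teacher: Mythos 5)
Your proposal is correct and follows essentially the same route as the paper: reduce to the commutative square supplied by Theorem~\ref{thm:genequiv}, and prove the $(\calk^?_*,\frakf,\tilg)$ assembly conjecture by writing $\tilg$ as a directed union of cocompact CAT(0) cubical subgroups and invoking continuity. The only (cosmetic) difference is that you realize these subgroups as stabilizers of convex components of preimages of the $T_{W_n}$ inside the universal cover $E$, whereas the paper obtains them by applying the construction of Theorem~\ref{thm:genequiv} directly to the cocompact $G$-subcomplexes $X_i$ of $X$; these yield the same subgroups acting on the same CAT(0) cubical complexes.
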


\begin{proof} 
First we claim that the $(\calk^?_*,\frakf,\tilg)$ assembly conjecture
holds.  Let $\{X_i:i\in \cali\}$ denote the collection of
$G$-subcomplexes of $X$ that contain only finitely many $G$-orbits of
simplices and have the property that $X_i/G$ is connected.  Then $X$ 
is the directed union of the subspaces $X_i$.  Applying the
construction of Theorem~\ref{thm:genequiv} to these spaces $X_i$ gives
an expression for $\tilg$ as a directed union of subgroups $\tilg_i$.  
The universal cover of $T_{X_i}$ is a CAT(0) cubical complex admitting
a cocompact action of $\tilg_i$ with stabilizers in $\frakf$.  Hence 
the $(\calk^?_*,\frakf,\tilg_i)$ assembly conjecture holds for each
$i\in \cali$.  Since $\calk^?_*$ is assumed to be continuous, the 
assembly map for $\tilg$ is equal to the following composite: 
\[\calk^\tilg_*(\ebar\tilg)\cong 
\lim_i \calk^{\tilg_i}_*(\ebar\tilg_i) \rightarrow 
\lim_i \calk^{\tilg_i}_*(\pt)\cong 
\calk^\tilg_*(\pt).\] 
It follows that the $(\calk^?_*,\frakf,\tilg)$ assembly conjecture holds 
as claimed.  

The homomorphism $\alpha:\tilg\rightarrow G$ induces an equivariant map 
$\ebar\tilg\rightarrow \ebar G$ and hence a commutative square as
below:
\[
\begin{array}{ccc}
\calk^\tilg_*(\ebar\tilg)&\rightarrow& \calk^G_*(\ebar G)\\
\downarrow&&\downarrow \\
\calk^\tilg_*(\pt)&\rightarrow& \calk^G_*(\pt),\\
\end{array}
\] 
in which the horizontal maps are both called $\ind_\alpha$, and 
come from the identifications $\ind_\alpha(\ebar\tilg)=\ebar G$ 
and $\ind_\alpha(*)=*$.  
By Theorem~\ref{thm:genequiv}, the map down the left side of the
diagram is an isomorphism, and by the argument given in the first
paragraph the top horizontal map is an isomorphism.  It follows that 
the lower horizontal map is an isomorphism if and only if the map down
the right side is an isomorphism.  
\end{proof}

There are a number of assembly conjectures to which
Theorem~\ref{thm:assembly} applies.  In each case discussed below, the
smoothness of the relevant homology theory is established
in~\cite{bartechtlueck}.  

\begin{itemize}

\item
The Bost conjecture is the assembly conjecture for a homology theory 
in which $\calk^G_*(\pt)$ is isomorphic to the topological $K$-theory 
of $\ell^1(G)$.  In~\cite{nibloreeves} it is shown that CAT(0) cubical 
groups have the Haagerup property.  In~\cite{lafforgue} the Bost conjecture is 
established for any group having the Haagerup property.  Thus
Theorem~\ref{thm:assembly} applies to this case.  

\item
Theorem~\ref{thm:assembly} does not apply directly to the Baum-Connes
conjecture, an assembly conjecture for which $\calk^G_*(\pt)$ is
isomorphic to the topological $K$-theory of the reduced group
$C^*$-algebra $C_\red^*(G)$.  The Baum-Connes conjecture holds for
groups having the Haagerup property~\cite{HigKas}, and the homology
theory is smooth~\cite{bartechtlueck}.  The difficulty is that the
homomorphism $\alpha:\tilg\rightarrow G$ does not induce a
homomorphism from $C_\red^*(\tilg)$ to $C_\red^*(G)$, and so 
there is no known way to define the map $\ind_\alpha$ appearing 
in the statement of Theorem~\ref{thm:assembly}.  
Instead of Theorem~\ref{thm:assembly}
we can make the following slightly weaker statement:  with notation as 
in the statement of Theorem~\ref{thm:assembly} and with $\calk^?_*$
equal to the generalized equivariant homology theory appearing in the 
Baum-Connes conjecture, we have that the Baum-Connes conjecture holds 
for $\tilg$ and $\ind_\alpha: \calk^\tilg_*(\ebar \tilg)\rightarrow
\calk^G_*(\ebar G)$ is an isomorphism.  
%
In the special case when $G$ is torsion-free (and with a different 
construction of a group $\tilg$) a similar statement appears in 
\cite[Introduction]{block}~and~\cite[part~I, theorem~5.19]{mislin}.  

\item
Let $R$ be a ring equipped with an involution.  After inverting the
prime~2, the Farrell-Jones conjecture in algebraic $L$-theory with 
coefficients in $R$ is equivalent to to the $(\calk^?_*,\frakf,G)$
assembly conjecture, where $\calk^H_*(\pt)$ is isomorphic to
$\zz[1/2]\otimes {\mathbf L}_*^{\langle-\infty\rangle}(RH)$, the 
algebraic $L$-groups of $RH$ localized away from the prime~2.  The 
Farrell-Jones conjecture in algebraic $L$-theory is proved for 
CAT(0) groups in~\cite{barlue}, and hence Theorem~\ref{thm:assembly}
applies directly for this choice of $\calk^?_*$.  

\item
Let $R$ be a regular ring, and suppose that $G$ is a group with the
property that the order of every finite subgroup of $G$ is a unit in
$R$.  Under these circumstances the Farrell-Jones conjecture for the
algebraic $K$-theory of $RG$ is equivalent to the
$(\calk^?_*,\frakf,G)$ assembly conjecture where $\calk^?_*$ is a
generalized equivariant homology theory for which $\calk^G_*(\pt)$ is
isomorphic to the algebraic $K$-theory of $RG$.  (Without these extra
assumptions on $G$ and~$R$, the Farrell-Jones conjecture would require
the larger family of virutally cyclic subgroups rather than the family
$\frakf$ of finite subgroups.)  The Farrell-Jones conjecture for
CAT(0) groups is proved in~\cite{wegner} (see
also~\cite{barlue}).  Every finite subgroup of the group $\tilg$ in
Theorem~\ref{thm:assembly} is isomorphic to a subgroup of $G$.  Thus
provided that $R$ satisfies the hypotheses listed at the start of this
paragraph, Theorem~\ref{thm:assembly} applies in this case.  Thus 
the Farrell-Jones assembly map $\calk^G_*(\ebar G)\rightarrow 
\calk^G_*(\pt)$ is an isomorphism if and only if $\alpha$ induces 
an isomorphism from $K_*(R\tilg)$ to $K_*(RG)$.  

\end{itemize}

\section{The torsion subgroup of a CAT(0) group} 
\label{sec:twe}

In this section, we let $\bfc$ denote the class of groups that act
cocompactly by cellular isometries on a CAT(0) cubical complex, and
that contain a torsion-free subgroup of index at most two.  A simple way
to describe a group $G$ in $\bfc$ is by giving a finite locally
CAT(0) cubical complex $T$ together with a cellular involution $\tau$
on $T$: the group $G$ thus described is the group of all lifts to the
universal cover of $T$ of elements of the group generated by $\tau$,
and the fundamental group of $T$ is a torsion-free subgroup of $G$ of
index at most two.  Note that we allow the case when $\tau$ is the
identity map on $T$, in which case $G$ is equal to the fundamental
group of $T$.  


Note that it is easy 
to compute the number of conjugacy classes of elements of order two 
in $G$: if $\tau$ is not the identity map then 
there is a bijection between these conjugacy classes and the 
components of the fixed point set $T^\tau$.  Not every problem
involving the torsion in $G$ can be solved so easily however, as 
the following theorem shows.  

\begin{theorem} 
There is no algorithm to determine for all $G\in \bfc$ whether $G$ 
is generated by torsion.  
\end{theorem}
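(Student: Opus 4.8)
The plan is to reduce the (classically undecidable) triviality problem for finitely presented groups to this question, using the construction of Theorem~A. Given a finite presentation $P$, one first builds, effectively, a finite connected simplicial complex $X_P$ with $\pi_1(X_P)$ isomorphic to the group presented by $P$ (take the presentation $2$-complex and triangulate it). Since the construction underlying Theorem~A is effective on finite complexes, one can then write down the finite locally CAT(0) cubical complex $T_{X_P}$ together with its cellular isometric involution $\tau$; by the description of $\bfc$ recalled above, the pair $(T_{X_P},\tau)$ presents a group $G_P\in\bfc$ --- the group of all lifts to the universal cover $E$ of $T_{X_P}$ of the group $\langle\tau\rangle$.

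The heart of the argument will be the claim that $G_P$ is generated by torsion if and only if $\pi_1(X_P)$ is trivial. To see this I would argue as follows. The complex $E$ is CAT(0) cubical by Gromov's criterion (Theorem~\ref{gromovthm}), and $G_P$ acts on it cellularly, cocompactly and properly discontinuously, with $\pi_1(T_{X_P})\trianglelefteq G_P$ of index at most two acting freely as the deck group; hence $E/G_P = T_{X_P}/\langle\tau\rangle$. A point stabiliser meets $\pi_1(T_{X_P})$ trivially, so injects into $G_P/\pi_1(T_{X_P})$ and has order at most two; conversely an element of $G_P$ of order two generates a finite group, which by Theorem~\ref{ebarg} has a fixed point in $E$. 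Therefore the set of elements of $G_P$ having a fixed point equals the set of torsion elements, and the (automatically normal) subgroup $N$ they generate is exactly the normal closure of the elements with fixed points. Armstrong's theorem on fundamental groups of quotients of simply connected complexes by properly discontinuous actions then gives $\pi_1(E/G_P)\cong G_P/N$. Since property~\ref{condb} of Theorem~A provides a homotopy equivalence $T_{X_P}/\langle\tau\rangle\to X_P$, this yields $G_P/N\cong\pi_1(X_P)$; so $G_P=N$, i.e.\ $G_P$ is generated by torsion, exactly when $\pi_1(X_P)$ is trivial.

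Given this, the theorem is immediate: an algorithm that decided, for every $G\in\bfc$ presented by a pair $(T,\tau)$, whether $G$ is generated by torsion would, composed with the effective assignment $P\mapsto(T_{X_P},\tau)$, decide whether the group presented by $P$ is trivial --- contradicting the Adian--Rabin theorem.

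I expect the main obstacle to be the middle paragraph, specifically the isomorphism $\pi_1(E/G_P)\cong G_P/N$. It rests on two points that deserve care: that every finite-order element of $G_P$ really does fix a point of $E$, which is exactly Theorem~\ref{ebarg} and uses in an essential way that that result does not assume metric completeness; and that Armstrong's theorem applies in the generality of cocompact cellular actions on CW-complexes. The remaining ingredients --- the effectivity of $P\mapsto T_{X_P}$ and the undecidability of the triviality problem --- are routine and classical respectively.
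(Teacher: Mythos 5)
Your proposal is correct and takes essentially the same route as the paper: both reduce the undecidability of the triviality problem for finite presentations to the question at hand by applying Theorem~A and using the equivalence that $G$ is generated by torsion if and only if $\pi_1(X)$ is trivial. The paper simply asserts this equivalence, whereas you justify it via Armstrong's theorem on fundamental groups of orbit spaces together with the fixed-point theorem for finite groups acting on CAT(0) cubical complexes (Theorem~\ref{ebarg}); that justification is sound, since for finite $X$ the universal cover of $T_X$ is a locally finite CAT(0) cubical complex on which the group acts properly with stabilizers of order at most two, so the elements with fixed points are exactly the torsion elements.
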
 

\begin{proof} 
It is known that there is no algorithm to determine whether a given
finite presentation presents the trivial group.  In particular, we may
construct a sequence $X_n$ of finite connected simplicial complexes
such that no algorithm can compute the function $f:\nn\rightarrow
\{0,1\}$ defined by $f(n)=1$ if and only if $\pi_1(X_n)$ is trivial.  

Apply Theorem~A to the sequence $X_n$, and consider
the corresponding sequence $(T_{X_n},\tau)$ of finite locally CAT(0)
cubical complexes equipped with a cellular involution, which describes
a sequence $G_n$ of groups in $\bfc$.  The group $G_n$ is generated
by torsion if and only if $\pi_1(X_n)$ is trivial, and so there can be
no algorithm to decide whether or not each $G_n$ is generated by
torsion.
\end{proof} 

\section{Closing remarks} 
\label{sec:thi}

It is natural to ask whether there is a version of
Theorem~A in which `locally CAT(0)' is replaced by
the stronger condition `locally CAT($-1$)'.  The author posed this
question in~\cite[Q~1.23]{besques}.  As a first step in answering this
question which is of interest in its own right: can one 
construct high-dimensional locally CAT($-1$) spaces that are 
acyclic but not contractible?  

In~\cite[theorem~20.2]{janswi}, Januszkiewicz and \'Swi\c{a}tkowski
show that for any finite simplicial complex $X$, there exists a
CAT($-1$) simplicial complex $S_X$ and a group $H_X$ acting properly
cocompactly and simplicially on $S_X$, such that the quotient
$S_X/H_X$ is homotopy equivalent to $X$.  This result can be viewed as
a CAT($-1$) but non-natural version of property~\ref{condb} of 
Theorem~A for finite complexes.  Corollaries 20.4~and~20.5
of~\cite{janswi} answer a question posed by the author
in~\cite[Q~1.24]{besques}.  

Two questions that were raised in earlier versions of this paper have
been answered.  Pol\'ak and Wise have shown that the group presented 
in Proposition~\ref{prop:acycone} is residually
finite~\cite{polakwise}.  Raeyong Kim has shown that every finite 
connected $X$ has the same homology as some locally CAT(0) cubical complex 
$V_X$ whose fundamental group is a duality group~\cite{raeyong}; this
is a CAT(0) version of a theorem of Hausmann~\cite{Hausmann}.  

Finally, a comment on hyperbolization as in 
\cite[sec.~3.4]{gromov}~and~\cite{charneydavis}.  As in our
Kan-Thurston construction, one constructs for a simplicial complex
$X$, a locally CAT(0) cubical complex $W_X$ equipped with a map
$w_X:W_X\rightarrow X$.  The extra property enjoyed by $W_X$ is that
its vertex links are similar to those in $X$, so that if $X$ is a 
PL $n$-manifold then so is $W_X$.  The map on homology induced by 
$w_X$ can be taken to be a split surjection.  Consideration of the 
case when $X$ is a 2-sphere shows that there cannot be a `Kan-Thurston
hyperbolization' in general.  For $n>1$ it seems to be unknown whether 
there exists a aspherical homology $2n$-sphere.

\appendix
\section{Metrizing complexes} 
\label{sec:appone}

We collect some facts about simplicial and cubical complexes,
together with detailed references or proofs.  Since we intend 
to metrize our complexes, we refer to~\cite[I.7.40]{brihae}, 
parts (2)~and~(3) respectively for the definitions of a simplicial 
complex and a cubical complex.  

Throughout this section, each $n$-cube is metrized as a standard
Euclidean cube of side length 1, and each $n$-simplex is metrized as 
the subspace of the Riemannian sphere of radius 1 in $\rr^{n+1}$ 
defined by intersecting the sphere with $(\rr_{\geq 0})^{n+1}$.  
This metric on the $n$-simplex will be called the `all right metric'.  
Let $C$ denote either a simplicial complex or a
cubical complex, and refer to the simplices or cubes of $C$ as its
cells.  As in~\cite[I.5.9]{brihae}, the metric on the cells induces a
pseudo-metric on any connected simplicial or cubical complex, in which
the distance between points $x$ and $y$ is the infimum of sums of the
form $\sum_{i=1}^n d(x_{i-1},x_i)$, where $x_0=x$, $x_n=y$, and for
each $i$, there is a single cell containing $x_i$ and $x_{i-1}$, and
the distance is measured in the standard metric on that cell.
(A sequence of points $x_0,\ldots,x_n$ with these properties is called
a `string' or an `$n$-string'.)  
By~\cite[I.7.10]{brihae}, this pseudo-metric turns out to be a metric.
Moreover, this metric makes $C$ into a length space as defined
in~\cite[I.3.1]{brihae}.  Another treatment of this material is 
contained in~\cite[section~3]{moussong}.  
In the case when $C$ is finite-dimensional, 
it is shown in~\cite[I.7.13]{brihae} that this metric is complete.  
We shall characterize the complexes $C$ for  which the metric is
complete, but first we need some lemmas.  

\begin{proposition} \label{shortest} 
Let $\sigma$ be a cell of $C$ and $x$ a point of $C$, such that 
$d=d(\sigma,x)$ satisfies either $d<1$ if $C$ is cubical or 
$d<\pi/2$ if $C$ is simplicial.  Then there is straight line 
segment from $\sigma$ to $x$ of length $d$, which is contained 
in the smallest closed cell of $C$ containing $x$.  This line 
segment is unique.  
\end{proposition}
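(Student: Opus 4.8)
The plan is to reduce the statement to a claim about a single point $y$ in the cell $\sigma$ realizing (or nearly realizing) the distance, and then to exploit the local structure of the metric on a single cell. First I would note that by definition of the path metric, there is a string $x_0 = y_0,\ldots,y_k = x$ with $y_0\in\sigma$ whose total length $\sum d(y_{i-1},y_i)$ is as close as we like to $d$; in particular we may take the total length $<1$ (cubical case) or $<\pi/2$ (simplicial case). The key geometric input is that in a single Euclidean unit cube, any two points are at distance $<\sqrt{\dim}$ but more importantly the ball of radius $<1$ around a face is ``convex towards the face'' in a suitable sense; in the all-right spherical simplex the analogous radius is $\pi/2$, which is exactly the injectivity-type radius for the spherical metric on the positive orthant. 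So within the bound on $d$ we never have to leave a neighborhood where geodesics are unique and behave well.

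The main step is an induction on the length $k$ of the string, showing that a string of total length $<1$ (resp.\ $<\pi/2$) from $\sigma$ to $x$ can be replaced by one of the form (point of $\sigma$, $x$) lying in a single cell, without increasing length. The inductive move is: given consecutive points $y_{i-1}, y_i, y_{i+1}$ with $y_{i-1}, y_i$ in a common cell $\sigma'$ and $y_i, y_{i+1}$ in a common cell $\sigma''$, replace the pair of segments by a single geodesic from $y_{i-1}$ to $y_{i+1}$; the point is that $\sigma'\cap\sigma''$ is a common face, $y_i$ lies in it, and because the whole configuration sits inside a ball of radius $<1$ (resp.\ $<\pi/2$) the shortcut stays inside $\sigma'\cup\sigma''$ and, crucially, inside the smallest cell containing $y_{i+1}$ together with the relevant faces. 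Iterating pushes all the ``break points'' into the closed cell containing $x$, and in the end the optimal segment must lie in the smallest closed cell $\bar\tau$ containing $x$: if it met a smaller face it could be shortened, contradicting minimality, and if it left $\bar\tau$ it would have to re-enter, again allowing a shortcut. The length of this final straight segment in $\bar\tau$ then equals $d(\sigma,x)$ by a direct compactness/continuity argument on the (finite-dimensional!) cell $\bar\tau$, where the distance function to the compact set $\sigma\cap\bar\tau$ is continuous and attains its minimum.

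Uniqueness follows from strict convexity: in the Euclidean cube the distance-to-a-convex-subset function is achieved at a unique point and along a unique segment, and in the all-right spherical simplex the same holds because balls of radius $<\pi/2$ are strictly convex and the spherical distance function is strictly convex along geodesics of length $<\pi$. So two distinct realizing segments would give, by taking a ``midpoint path,'' a strictly shorter connection, a contradiction.

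The step I expect to be the main obstacle is controlling the geometry of the shortcut move across a shared face in the \emph{spherical} (simplicial) case: one must verify that replacing a two-segment path through a face point $y_i$ by a single spherical geodesic keeps the path inside the union of the two simplices and does not increase length, and this uses the fact that the all-right spherical metric on the positive orthant has all dihedral angles $\pi/2$, so that adjacent simplices ``unfold'' isometrically into a single spherical simplex of one higher dimension in which ordinary spherical convexity (valid for radius $<\pi/2$) applies. Getting this unfolding and the radius bookkeeping exactly right — so that the running total length stays below the threshold throughout the induction — is the delicate part; the cubical case is the easy analogue where ``unfold two adjacent cubes into a box'' makes the argument transparent.
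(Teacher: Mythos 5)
Your central reduction move is the step that fails. You propose to shorten a string by replacing two consecutive segments, meeting at a point $y_i$ of the common face of the carriers $\sigma'$ and $\sigma''$, by a single geodesic inside $\sigma'\cup\sigma''$, justified by ``unfolding'' the two cells into one convex model. Such an unfolding exists only when the two cells share a codimension-one face (two unit cubes along a facet give a box; two all-right simplices along a facet give a convex spherical lune). When the shared face has higher codimension --- for instance two squares meeting only in a vertex --- there is no development into a single Euclidean or spherical convex set, the two-segment path may already be a geodesic of the union, and your move neither straightens the path nor reduces the number of break points; the induction stalls precisely in the case that carries all the content. The paper's reduction is genuinely different: it compares the carriers $\sigma_1,\sigma_2$ of the first two segments, drops $x_1$ when $\sigma_1$ is a face of $\sigma_2$, and in the remaining case (the carrier $\tau$ of $x_1$ a proper face of $\sigma_1$) it does not merge segments at all but replaces the initial point of the string by the foot of the perpendicular from $x_1$ to $\tau\cap\sigma$ inside $\tau$, which strictly shortens the string.

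Connected with this, you never use what the hypotheses $d<1$, resp.\ $d<\pi/2$, are actually for: disjoint faces of a unit Euclidean cube, resp.\ of an all-right spherical simplex, lie at distance at least $1$, resp.\ $\pi/2$, and this is what forces $\tau\cap\sigma\neq\emptyset$ (and, at the end, that the carrier of $x$ meets $\sigma$), so that the perpendicular-foot move is available. Instead you invoke uniqueness of short geodesics and strict convexity of small balls in $C$. But here $C$ is an arbitrary cubical or all-right simplicial complex --- no flag, CAT(0) or CAT(1) hypothesis is in force (this proposition is a foundational step used on the way to Gromov's criterion, so assuming such behaviour would be circular), and these properties really do fail: around a vertex contained in three squares there are pairs of points at arbitrarily small distance joined by two distinct geodesics. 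For the same reason your uniqueness argument is incomplete: strict convexity inside the single cell $\bar\tau$ excludes two realizing segments within $\bar\tau$, but not a competing shortest path of the same length passing through other cells. In the paper uniqueness comes from the reduction itself: every string of length less than $1$ (resp.\ $\pi/2$) reduces, without increasing length, to one and the same segment, namely the perpendicular from $x$ to $\tau\cap\sigma$ inside the carrier $\tau$ of $x$.
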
 

\begin{proof} 
Pick an $n$ and an $n$-string $x_0,\ldots,x_n$ of length less than 
$1$ (resp.\ $\pi/2$) with $x_0\in \sigma$ and $x_n=x$, and suppose 
that $n\geq 2$.  Let $\sigma_i$ be the minimal closed cell containing 
the line segment $[x_{i-1},x_i]$.  We may assume that no $\sigma_i$ is
a face of $\sigma$.  If the dimensions of these cells 
satisfy $\dim(\sigma_2)>\dim(\sigma_1)$, then $\sigma_1$ is a face of 
$\sigma_2$ and the $(n-1)$-string $x_0,x_2,\ldots,x_n$ is shorter.  
If $\sigma_1=\sigma_2$, then once again the $(n-1)$-string 
$x_0,x_2,\ldots,x_n$ is either shorter or of the same length 
(the latter possibility 
occurs if $x_1$ is on the line segment $[x_0,x_2]$).  In the remaining 
case, the minimal closed cell $\tau$ containing $x_1$ is a proper face 
of $\sigma_1$.  Since the segment $[x_0,x_1]$ has length less than 
$1$ (resp.\ $\pi/2$), it follows that $\tau\cap \sigma$ is non-empty.  
Let $x'_0$ be the base of the perpendicular from $x_1$ to 
$\tau\cap\sigma$.  The 
$n$-string $x'_0,x_1,\ldots,x_n$ is a path from $\sigma$ to $x$ and is
shorter than the original string.  

This process one can reduce any $n$-string for $n\geq 2$ to the  
single line segment $[y,x]$, where $\tau$ is the minimal closed cell 
containing $x$ and $y$ is the base of the perpendicular in $\tau$ from 
$x$ to $\tau\cap\sigma$.  It follows that this segment is the unique 
shortest path from $\sigma$ to $x$, and that it has length $d$.  
\end{proof} 

\begin{corollary} 
Suppose that $\lambda<1$ in the cubical case or $\lambda<\pi/2$ in the 
simplicial case, and for $\sigma$ a cell of $C$, let
$O=O^\lambda_C(\sigma)$ be the open $\lambda$-ball in $C$ around $\sigma$.  
For any cell $\tau$ in $C$, one has that 
$$O^\lambda_C(\sigma)\cap\tau= O^\lambda_\tau(\sigma\cap\tau).$$ 
In particular $O^\lambda_C(\sigma)\cap\tau$ is empty if
$\sigma\cap\tau$ is empty.  
\end{corollary}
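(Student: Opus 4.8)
The plan is to read the corollary straight off Proposition~\ref{shortest}. Write $d_C$ for the path metric on $C$ and, for a closed cell $\tau$, write $d_\tau$ for the intrinsic metric on $\tau$ (so that $O^\lambda_\tau(\sigma\cap\tau)=\{x\in\tau : d_\tau(\sigma\cap\tau,x)<\lambda\}$, understood to be empty when $\sigma\cap\tau=\emptyset$). Throughout, note that since $d<\lambda$ with $\lambda<1$ in the cubical case and $\lambda<\pi/2$ in the simplicial case, any point within distance $\lambda$ of $\sigma$ satisfies the hypothesis of Proposition~\ref{shortest}.

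First I would prove the easy inclusion $O^\lambda_\tau(\sigma\cap\tau)\subseteq O^\lambda_C(\sigma)\cap\tau$. Suppose $x\in\tau$ with $d_\tau(\sigma\cap\tau,x)<\lambda$; in particular $\sigma\cap\tau$ is nonempty. Any path in $\tau$ from a point of $\sigma\cap\tau$ to $x$ of length $<\lambda$ is a $1$-string in $C$ starting in $\sigma$ and ending at $x$, so $d_C(\sigma,x)<\lambda$; hence $x\in O^\lambda_C(\sigma)\cap\tau$.

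Next I would prove the reverse inclusion using Proposition~\ref{shortest}. Let $x\in O^\lambda_C(\sigma)\cap\tau$ and put $d=d_C(\sigma,x)<\lambda$. By Proposition~\ref{shortest} there is a (unique) straight segment $[y,x]$ from $\sigma$ to $x$ of length $d$ contained in the smallest closed cell $\tau'$ of $C$ that contains $x$. Since $x\in\tau$, that minimal cell $\tau'$ is a face of $\tau$, so $[y,x]\subseteq\tau'\subseteq\tau$; in particular its endpoint $y$ on $\sigma$ lies in $\tau$, so $y\in\sigma\cap\tau$ and this intersection is nonempty. Now $[y,x]$ is a path inside $\tau$ of length $d<\lambda$ from $\sigma\cap\tau$ to $x$, whence $d_\tau(\sigma\cap\tau,x)\le d<\lambda$, i.e.\ $x\in O^\lambda_\tau(\sigma\cap\tau)$. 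Combining the two inclusions gives the claimed equality whenever $\sigma\cap\tau\neq\emptyset$. When $\sigma\cap\tau=\emptyset$, the right-hand side is empty by convention, and the argument of this paragraph shows that a point of $O^\lambda_C(\sigma)\cap\tau$ would produce a point $y\in\sigma\cap\tau$, a contradiction; so the left-hand side is empty too, which also yields the final assertion of the statement.

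The only point requiring any care—and the step I would flag as the crux—is the claim that the $\sigma$-endpoint $y$ of the minimizing segment lies in $\tau$. This is not automatic from the mere existence of a short segment from $\sigma$ to $x$; it relies on the precise conclusion of Proposition~\ref{shortest} that the \emph{entire} segment, not just its terminal point, lies in the smallest closed cell containing $x$, together with the elementary observation that this minimal cell is a face of $\tau$ and hence a subset of $\tau$. Everything else is a direct comparison of path lengths.
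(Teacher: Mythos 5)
Your proof is correct and follows the route the paper intends: the corollary is stated without proof precisely because it is meant to be read off Proposition~\ref{shortest}, with the hard inclusion coming from the fact that the minimizing segment from $\sigma$ to $x$ lies in the smallest closed cell containing $x$, which is a face of $\tau$. Your write-up simply makes that deduction explicit, including the nonemptiness of $\sigma\cap\tau$, so there is nothing to add.
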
 

\begin{corollary} \label{ftau} 
Let $\lambda=2/3$ and for $\tau$ a cube in a
cubical complex $C$ define $F_\tau$ by the equation 
$$F_\tau=C - \bigcup_{\sigma\cap\tau=\emptyset}
O^\lambda_C(\sigma).$$
Also define $F_\emptyset=\emptyset$.  
Then $F_\tau$ is closed and contains the closed ball in $C$ of radius 
$1/3$ about $\tau$.  If $\tau_i$ for $i\in I$ are cells of $C$,
with intersection $\mu$, then $\bigcap_{i\in I}F_{\tau_i}= F_\mu$.  
\end{corollary}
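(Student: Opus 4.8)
The plan is to verify the three assertions in turn, only the last needing real work. Closedness is immediate: for each cell $\sigma$ the function $x\mapsto d_C(x,\sigma)$ is $1$-Lipschitz, so $O^\lambda_C(\sigma)$ is open and $F_\tau$ is the complement of a union of open sets (and $F_\emptyset=\emptyset$ is closed trivially). For the claim that $F_\tau$ contains the closed $1/3$-ball about $\tau$, I would argue by contradiction: if $d_C(x,\tau)\le 1/3$ but $x\in O^\lambda_C(\sigma)$ for some cell $\sigma$ disjoint from $\tau$, then $d_C(\sigma,\tau)<\lambda+1/3=1$ by the triangle inequality, so there is a point $t\in\tau$ with $d_C(\sigma,t)<1$; Proposition~\ref{shortest}, applied to the cell $\sigma$ and the point $t$, produces a segment from $\sigma$ to $t$ contained in the smallest cell containing $t$, which is a face of $\tau$, so the endpoint of that segment on $\sigma$ lies in $\tau$, contradicting $\sigma\cap\tau=\emptyset$.

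For the intersection formula write $\mu=\bigcap_i\tau_i$. The inclusion $F_\mu\subseteq\bigcap_iF_{\tau_i}$ is pure monotonicity: $\mu\subseteq\tau_i$, so every cell disjoint from $\tau_i$ is disjoint from $\mu$, whence the open set deleted to form $F_{\tau_i}$ is contained in the one deleted to form $F_\mu$. For the reverse inclusion I would fix $x\in\bigcap_iF_{\tau_i}$, suppose for contradiction that some cell $\sigma$ satisfies $\sigma\cap\mu=\emptyset$ and $d_C(x,\sigma)<\lambda$, and localize everything to the carrier $\tau_x$ of $x$ (the smallest cell containing $x$), viewed as a Euclidean unit cube $[0,1]^n$ with $x$ in its interior. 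Since $d_C(x,\sigma)<1$, Proposition~\ref{shortest} places the foot $\bar x$ of $x$ on $\sigma$ inside $\rho:=\sigma\cap\tau_x$, so $\rho$ is a non-empty face of $\tau_x$ with $d_C(x,\rho)<\lambda$ and $\rho\cap\mu=\emptyset$; the same proposition lets distances from $x$ to faces of $\tau_x$ be computed within $\tau_x$. Writing $h_i:=\tau_x\cap\tau_i$, if some $h_i$ is empty then $\tau_x$ is a cell containing $x$ and disjoint from $\tau_i$, forcing $x\notin F_{\tau_i}$, a contradiction; so each $h_i$ is a non-empty face of $\tau_x$, and $\rho\cap\bigcap_i h_i=\rho\cap\tau_x\cap\mu=\emptyset$. (This covers the degenerate case $\mu=\emptyset$ too, where $F_\emptyset=\emptyset$ and such a $\sigma$ — e.g.\ $\tau_x$ — always exists.)

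The combinatorial core is the elementary fact that a face of $[0,1]^n$ is the locus where a prescribed set of coordinates takes prescribed values in $\{0,1\}$, so that an empty common intersection of a family of such faces already forces some \emph{pair} of them to be disjoint. Applying this to $\rho$ together with the $h_i$, I obtain either indices $i,j$ with $h_i\cap h_j=\emptyset$, or an index $i$ with $\rho\cap h_i=\emptyset$, in each case because a single coordinate $k$ is pinned to opposite values by the two faces. In the first case $x_k\in(0,1)$, so the nearer of the opposite facets $\{y_k=0\}$, $\{y_k=1\}$ of $\tau_x$ lies within distance $1/2<\lambda$ of $x$ and is disjoint from $\tau_j$ or from $\tau_i$ accordingly; in the second case $\rho$ pins $y_k$ to some $c_k$, so $|x_k-c_k|\le d_C(x,\rho)<\lambda$ and the facet $\{y_k=c_k\}$ lies within $\lambda$ of $x$ and is disjoint from $\tau_i$. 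In every case a facet of $\tau_x$ — a cell of $C$ — lies at distance $<\lambda$ from $x$ and is disjoint from some $\tau_l$, so $x\notin F_{\tau_l}$, contradicting $x\in\bigcap_iF_{\tau_i}$. Hence no such $\sigma$ exists and $x\in F_\mu$.

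The reverse inclusion is the only real obstacle. The two delicate points are the reduction of the global condition $\sigma\cap\mu=\emptyset$ to combinatorics inside the single cube $\tau_x$ — exactly where Proposition~\ref{shortest} is used, both to see that $\sigma$ meets $\tau_x$ and to measure distances there — and the two numerical inequalities $\min(x_k,1-x_k)\le 1/2<2/3$ and $|x_k-c_k|\le d_C(x,\rho)<2/3$ on which the whole argument turns. These show why $\lambda=2/3$, and the radius $1/3=1-\lambda$, are the right constants; a substantially larger $\lambda$ would break both estimates.
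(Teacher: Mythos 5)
Your proof is correct, but it takes a different route from the paper's. The paper deduces everything from the preceding corollary (the restriction formula $O^\lambda_C(\sigma)\cap\tau'=O^\lambda_{\tau'}(\sigma\cap\tau')$): intersecting $F_\tau$ with each closed cell gives a purely combinatorial description of $F_\tau$ as the union of the closed subcubes, in the subdivision cutting each $n$-cube into $3^n$ congruent pieces, that meet $\tau$ (equivalently, that contain the barycentre of a face of $\tau$); the closed $1/3$-ball statement and the identity $\bigcap_iF_{\tau_i}=F_\mu$ are then read off from that description, and the description is reused later (it is exactly $\phi^{-1}(\tau)$ for the map $\phi$ in the Dowker argument). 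You instead bypass the restriction corollary and the subdivision entirely: you invoke Proposition~\ref{shortest} directly to localize to the carrier cube $\tau_x$, use the cubical-complex fact that intersections of cells are single common faces, and then run a Helly-type argument for faces of $[0,1]^n$ (empty intersection forces two faces to pin one coordinate to opposite values), producing in each case a facet of $\tau_x$ within $\lambda$ of $x$ and disjoint from some $\tau_i$. What your version buys is an explicit, self-contained verification of the steps the paper compresses into ``the other claimed properties follow,'' and it makes transparent why $\lambda=2/3$ and the radius $1/3$ are the right constants; what it costs is length and a case analysis, and it does not produce the reusable subdivision picture of $F_\tau$ that the paper exploits afterwards. (Your ball-containment argument via $d(\sigma,\tau)<1$ and Proposition~\ref{shortest} is also a nice direct alternative to reading the claim off the subdivision description.)
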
 

\begin{proof} 
As the complement of a union of open sets, $F_\tau$ is closed.  
The previous corollary allows one to find the intersection of 
$F_\tau$ with each closed cell of $C$.  This gives a combinatorial 
description of $F_\tau$.  Let $C'$ be the subdivision of $C$ in 
which each $n$-cube is divided in to $3^n$ congruent pieces.  
Then $F_\tau$ is equal to the union of the (closed) cubes in $C'$ 
that intersect $\tau$.  Equivalently, $F_\tau$ is equal to the 
union of the (closed) cubes in $C'$ that contain the barycentre of
some cube of $C$.  The other claimed properties follow.  
\end{proof}

\begin{proposition} \label{nonemp}
Suppose that there is a point $x$ in a cubical complex $C$ which 
is at distance at most $1/3$ from a collection $\tau_i$ for $i\in I$ 
of cells of $C$.  Then $\mu=\bigcap_{i\in I} \tau_i$ is non-empty.  
\end{proposition}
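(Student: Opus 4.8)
The plan is to read this off almost immediately from Corollary~\ref{ftau}. Recall from that corollary that for each cube $\tau$ of $C$ the set $F_\tau$ is closed and contains the closed ball of radius $1/3$ about $\tau$, that $F_\emptyset=\emptyset$ by definition, and that for any family of cubes $\tau_i$, $i\in I$, with intersection $\mu=\bigcap_{i\in I}\tau_i$ one has $\bigcap_{i\in I}F_{\tau_i}=F_\mu$.

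First I would use the hypothesis: since $d(x,\tau_i)\leq 1/3$ for every $i\in I$, the point $x$ lies in the closed $1/3$-ball about $\tau_i$, and hence $x\in F_{\tau_i}$ by Corollary~\ref{ftau}. As this holds for all $i$, we get $x\in\bigcap_{i\in I}F_{\tau_i}$. Next I would invoke the identity $\bigcap_{i\in I}F_{\tau_i}=F_\mu$ from the same corollary, so that $x\in F_\mu$; in particular $F_\mu$ is non-empty. Finally, since $F_\emptyset=\emptyset$, the non-emptiness of $F_\mu$ forces $\mu\neq\emptyset$, which is exactly the claim.

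There is essentially no obstacle here: the real work — the combinatorial description of $F_\tau$ in terms of the subdivision $C'$ of $C$ into $3^n$-fold pieces, and in particular the intersection formula $\bigcap F_{\tau_i}=F_\mu$ — has already been carried out in Corollary~\ref{ftau}, and this proposition is just the clean geometric consequence. The only point worth stating explicitly in the write-up is the first step, namely that ``distance at most $1/3$ from $\tau_i$'' literally means ``lies in the closed $1/3$-ball about $\tau_i$'', which is contained in $F_{\tau_i}$.
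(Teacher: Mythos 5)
Your proposal is correct and is exactly the paper's argument: the paper also notes that $x$ lies in each $F_{\tau_i}$ (via Corollary~\ref{ftau}) and concludes from $\bigcap_{i\in I}F_{\tau_i}=F_\mu$ together with $F_\emptyset=\emptyset$ that $\mu$ is non-empty. Your write-up just spells out the same steps slightly more explicitly.
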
 

\begin{proof} 
The point $x$ is in each $F_{\tau_i}$, and hence $\bigcap_{i\in I} 
F_{\tau_i}= F_\mu$ is non-empty.  
\end{proof} 

\begin{remark} 
There is no analogue of this proposition for all right simplicial
complexes.  Let $C$ be the boundary of an $n$-simplex.  The 
top dimensional faces of $C$ do not intersect, but their barycentres
are at distance $\arccos(1-1/n)$ from each other.  
\end{remark} 

\begin{theorem} \label{thm:cubecomplete}
The metric on $C$ is complete if and only if $C$ does not contain an 
infinite ascending sequence of cells.  
\end{theorem}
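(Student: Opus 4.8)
\emph{The reverse implication.} Suppose $C$ contains an infinite ascending chain $\sigma_1\subsetneq\sigma_2\subsetneq\cdots$. I would first normalise coordinates: fix a vertex $v$ of $\sigma_1$ and enumerate the edges (in the cubical case) or the vertices (in the simplicial case) of $\bigcup_n\sigma_n$ incident to $v$ by listing those of $\sigma_1$ first, then those of $\sigma_2$ not already listed, and so on; since each $\sigma_n$ is a face of $\sigma_{n+1}$ containing $v$, it is spanned by an initial segment of this list, so $\sigma_n=[0,1]^{d_n}$ (resp.\ $\Delta^{d_n}$) with $d_n=\dim\sigma_n$. Now put $x_n\in\sigma_n$ at coordinate $2^{-k}$ along the $k$th direction for $k=1,\dots,d_n$ (in the simplicial case, barycentric weights proportional to $(1,\tfrac12,\dots,2^{-d_n})$, then rescaled for the all-right metric). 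Consecutive points $x_n,x_{n+1}$ both lie in $\sigma_{n+1}$ and differ only in the coordinates indexed $d_n+1,\dots,d_{n+1}$, so $d(x_n,x_{n+1})\le\big(\sum_{k>d_n}4^{-k}\big)^{1/2}$ in the cubical case and a comparable quantity, of size $\arcsin(2^{-n})$, in the simplicial case; as $d_n\ge n-1$ these are summable, so $(x_n)$ is Cauchy. If $(x_n)\to x$ with $x$ lying in a cell $\tau$ of finite dimension $m$, then for $n$ large $d(\tau,x_n)\le d(x,x_n)<2^{-(m+2)}<1$, so Proposition~\ref{shortest} applies: the nearest point of $\tau$ to $x_n$ lies in the minimal cell of $x_n$, which is $\sigma_n$ (all coordinates of $x_n$ are positive), hence in a common face $\rho$ of $\tau$ and $\sigma_n$ of dimension $\le m$, with $d(x_n,\rho)<2^{-(m+2)}$. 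But a direct computation shows that every face of $\sigma_n=[0,1]^{d_n}$ of dimension $\le m$ is at distance $\ge 2^{-(m+1)}$ from $x_n$ (in the simplicial case, $\ge\arcsin(2^{-(m+1)})$, using that the rescaling constants stay bounded away from their limit), a contradiction. Hence $C$ is incomplete.

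\emph{The forward implication.} Let $C$ have no infinite ascending chain and let $(x_n)$ be Cauchy; write $E_n$ for the minimal closed cell containing $x_n$ and $\eta_n=\sup_{m\ge n}d(x_n,x_m)\to 0$. Fix $N$ with $\eta_N<1/3$ (cubical case; $<\pi/2$ in the simplicial case). For each $n\ge N$ we have $d(x_n,E_m)\le d(x_n,x_m)<1/3$ for all $m\ge n$, so by Proposition~\ref{nonemp} the cell $\mu_n:=\bigcap_{m\ge n}E_m$ is non-empty, and it is a common face of every $E_m$ with $m\ge n$. (In the simplicial case one argues the same conclusion directly from Proposition~\ref{shortest} and the geometry of the all-right simplex.) The cells $\mu_n$ form an ascending chain, so by hypothesis they stabilise: $\mu_n=\mu$ for all $n\ge N_0$, and $\mu$ is a common face of $E_m$ for every $m\ge N_0$. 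I would then show that $d(x_n,\mu)\to 0$: for $n\ge N_0$ the sets $E_n\cap E_m$ ($m\ge n$) are faces of the cube $E_n$, hence finite in number, with intersection $E_n\cap\bigcap_mE_m=\mu$, so one may pick $m_1,\dots,m_p\ge n$ with $E_n\cap E_{m_1}\cap\dots\cap E_{m_p}=\mu$; writing $E_n$ as a metric product $\mu\times\nu$ and $x_n=(a,b)$, each coordinate direction of $\nu$ is transverse to some $E_n\cap E_{m_i}$, and $d(x_n,\overline{E_n\cap E_{m_i}})\le d(x_n,x_{m_i})\le\eta_n$ by Proposition~\ref{shortest}, so grouping the coordinates of $b$ gives $d(x_n,\mu)^2=|b|^2\le\sum_{i=1}^p d(x_n,x_{m_i})^2\le p\,\eta_n^2$. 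Since $\mu$ is a single cell, hence compact, a Cauchy sequence whose distance to $\mu$ tends to $0$ converges to a point of $\mu\subseteq C$, so $C$ is complete.

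\emph{The main obstacle.} The delicate point is the last step: a priori the number $p$ of faces needed above (equivalently the codimension of $\mu$ in $E_n$) may grow with $n$, so that $p\,\eta_n^2$ need not tend to $0$ — indeed $\dim E_n$ itself need not be bounded along a convergent Cauchy sequence. Overcoming this is precisely where the no-infinite-chain hypothesis has to do real work; my plan is to confine the tail of $(x_n)$ to the closed "regular neighbourhood" $F_\mu$ of $\mu$ furnished by Corollary~\ref{ftau}, whose structure as (essentially) a bundle of metric cones over the link of $\mu$ lets one deduce its completeness from that of $\mu$ together with an inductive use of the theorem for $\operatorname{Lk}(\mu)$; one then shows either directly that the auxiliary family of faces of $E_n$ reduces to a bounded one, or that the $x_n$ converge within $F_\mu$ regardless. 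The simplicial case follows the same outline throughout, with the all-right geometry of a simplex replacing Proposition~\ref{nonemp}; the failure of that proposition recorded in the Remark is harmless here, since the offending distances $\arccos(1-1/k)$ enter only through this same bounded-codimension reduction.
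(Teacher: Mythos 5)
Your reverse implication is fine: the explicit sequence with coordinates $2^{-k}$ is a correct (and in fact more detailed) variant of the paper's sequence of points at distance $3^{-i}$ from the boundaries of the cells in the chain. The forward implication, however, contains a genuine gap, and it is precisely the one you flag yourself: the estimate $d(x_n,\mu)^2\le p\,\eta_n^2$ proves nothing, because $p$ (the codimension of $\mu$ in $E_n$) need not be bounded in $n$, and your proposed repair --- confining the tail to the neighbourhood $F_\mu$ of Corollary~\ref{ftau} and an ``inductive use of the theorem for $\Lk(\mu)$'' --- is only a plan, not an argument. It is unclear what the induction would run on (dimensions are unbounded), and the completeness transfer for a cone-like neighbourhood of $\mu$ is not available in the paper; so as written the decisive step, that the Cauchy sequence converges to a point of $\mu$, is unproved. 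The paper avoids your obstacle by a different reduction: instead of intersecting all tails and letting the faces $\mu_n$ stabilise, it passes to a subsequence so that the common face $\theta$ of the minimal cubes $\sigma_n$ is \emph{maximal}, i.e.\ no cell strictly containing $\theta$ lies in infinitely many $\sigma_n$ (this is where the no-infinite-chain hypothesis enters), projects the $x_n$ to $\theta$ to obtain a limit $y$, and then applies Proposition~\ref{shortest} once to compare $d(x_n,\sigma_m)$ with $d(x_n,x_m)$ directly --- no coordinate counting, hence no factor of $p$.

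A second gap is the simplicial case. Your forward argument leans on Proposition~\ref{nonemp}, which, as the remark following it in the paper shows, fails for all-right simplicial complexes (the facets of the boundary of an $n$-simplex have barycentres at mutual distance $\arccos(1-1/n)$ yet empty intersection); declaring this failure ``harmless'' because it only enters through the bounded-codimension reduction does not help when that reduction is itself the missing step. The paper sidesteps the issue by reducing the simplicial case to the cubical one: it realises $C$ as the link of the cone vertex $v_0$ in a cubical complex $D\subseteq\bigoplus_V\rr$, notes that $D$ has an infinite ascending chain of cubes exactly when $C$ has one of simplices, compares the metrics via $d_D=2\sin(d_C/2)$ on small distances so that Cauchy sequences in $C$ and $D$ correspond, and observes that the limit in $D$ lies at distance $1$ from $v_0$ and hence in $C$. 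You would need either this reduction or a genuinely different simplicial argument to complete your proof.
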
 

\begin{proof} 
Let $\sigma_0\subseteq\sigma_1\subseteq\cdots$ be an ascending
sequence of cells, where $\sigma_i$ has dimension $i$.  Let
$x_0=\sigma_0$, and for $i>0$ pick $x_i\in \sigma_i$ to be a point at
distance $1/3^i$ from the boundary of $\sigma_i$ whose nearest point 
in the boundary is $x_{i-1}$.  This is a Cauchy sequence with no
convergent subsequence.  

For the converse, first consider the case when $C$ is cubical.
Suppose that $C$ contains no infinite ascending sequence of cells.  
Let $(x_n)$ be a Cauchy sequence in $C$, and let $\sigma_n$ denote the
minimal closed cube containing $x_n$.  By passing to a subsequence, 
we may assume that each $d(x_m,x_n)<1/3$, and so by
Proposition~\ref{nonemp}, $\theta=\bigcap_n\sigma_n$ is non-empty.  
Suppose that $\theta'$ is a cell properly containing $\theta$ 
which is contained in $\sigma_n$ for infinitely many $n$.  By passing
to this subsequence, we may replace $\theta$ by $\theta'$.  Since
there is no infinite ascending chain of cells, we may assume that 
$\theta$ has the property that for each $\theta'>\theta$, there are 
only finitely many $n$ for which $\theta'\geq \sigma_n$.  Now let 
$y_n$ be the closest point of $\theta$ to $x_n$.  The map $x_n\mapsto
y_n$ is distance-decreasing, so $(y_n)$ is Cauchy.  (The reader who 
does not want to check that this map is distance-decreasing may 
instead pass to a convergent subsequence.)  Let $y$ be the 
limit of the sequence $(y_n)$.  

It remains to show that $(x_n)$ converges to $y$.  But if
$d(x_m,x_n)<1$, then certainly $d(\sigma_m,x_n)<1$, and so by 
Proposition~\ref{shortest}, the shortest path from $x_n$ to 
$\sigma_m$ is the straight line inside $\sigma_n$ from $x_n$ to 
$y$.  Thus $d(y,x_n)=d(\sigma_m,x_n)\leq d(x_m,x_n)$.  This completes 
the proof in the cubical case.  

For the simplicial case, given an all right simplicial complex $C$
with vertex set $V$, let $v_0$ be the origin in $W=\bigoplus_V\rr$,
equipped with the standard Euclidean norm, and identify the vertex
$v\in C$ with the unit vector in the positive $v$-direction in $W$.
For each $(n-1)$-simplex $\sigma=(v_1,\ldots,v_n)$ of $C$, let
$c(\sigma)$ be the $n$-cube in $W$ with vertex set the points
$\sum_{i=1}^n \epsilon_iv_i$, for $\epsilon_i\in\{0,1\}$.  Define a
cubical complex $D$ as the union of all of these (closed) cubes.  Now $D$
is a cubical complex in which the link of $v_0$ is equal to $C$.  In
particular, $D$ has no infinite ascending sequence of cubes if and
only if $C$ has no infinite ascending sequence of simplices.
Futhermore, there is a natural embedding of $C$ in $D$, which embeds
$\sigma$ as the intersection of $c(\sigma)$ and the unit sphere in $W$
based at $v_0$.  This embedding does not preserve distances: if $x,y$
are points inside a single simplex $\sigma$ of $C$ and
$d_\sigma(x,y)=\theta$, then $d_{c(\sigma)}(x,y)=2\sin(\theta/2)$.
The convexity properties of the sine function imply that for any
points $x,y\in C$ with $d_C(x,y)=\theta\leq \pi/4$, one has that
$2\sin(d_C(x,y)/2)\leq d_D(x,y)\leq d_C(x,y)$.  It follows that a
sequence $(x_n)$ in $C$ is Cauchy in $C$ if and only if it is Cauchy
in $D$.  If $x\in D$ is the limit in $D$ of the Cauchy sequence
$(x_n)$ in $C$, then by continuity of the metric, $d_D(v_0,x)=1$ and
so $x\in C$.
\end{proof} 

\begin{theorem} 
The identity map from $C$ with the CW-topology to $C$ with the metric
topology is a homotopy equivalence.  
\end{theorem}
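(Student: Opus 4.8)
The plan is to exhibit a simplicial complex $N$ together with maps from $C$ (in either topology) to $|N|$ which are homotopy equivalences and which agree as set maps, so that the identity $\iota\colon C_{\mathrm{CW}}\to C_{\mathrm{m}}$ (writing $C_{\mathrm{CW}}$, $C_{\mathrm{m}}$ for $C$ with the CW and the metric topology) appears as a composite of homotopy equivalences. Continuity of $\iota$ is immediate: a map out of a CW-complex is continuous as soon as it is continuous on each closed cell, and on a closed cell $\sigma$ the metric $d_C$ is dominated by the intrinsic metric $d_\sigma$, so $d_C(x,\cdot)|_\sigma$ is $1$-Lipschitz and $\iota|_\sigma$ is continuous into $C_{\mathrm{m}}$.

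For each vertex $v$ of $C$ let $b_v\colon C\to[0,1]$ denote the corresponding barycentric coordinate (in the cubical case $b_v(x)=\prod_i c_i$, where $c_i=x_i$ or $1-x_i$ according as $v$ has $i$th coordinate $1$ or $0$ in the cube carrying $x$). These functions are continuous for the CW topology, and, using Proposition~\ref{shortest} to control them on small balls, also for the metric topology. Put $U_v=b_v^{-1}((0,1])$, an open subset of $C$ in both topologies; the $U_v$ cover $C$. A nonempty finite intersection $\bigcap_{v\in S}U_v$ is the set of points whose carrier cell contains every vertex in $S$; it is the open star of the smallest cell $\mu_S$ having $S$ in its vertex set, and the cellwise-linear radial push towards $\mu_S$ followed by a contraction of $\mu_S$ to its barycentre gives a deformation retraction of $\bigcap_{v\in S}U_v$ to that barycentre, continuous for both topologies. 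Hence $\{U_v\}$ is a good open cover of $C$ in each topology, with nerve the simplicial complex $N$ whose simplices are the finite sets of vertices of $C$ lying in a common cell (so $N=C$ when $C$ is simplicial).

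Now $C_{\mathrm{m}}$ is paracompact, being metric, and $C_{\mathrm{CW}}$ is paracompact, being a CW-complex; choose a partition of unity $\{\rho_v\}$ on $C_{\mathrm{m}}$ subordinate to $\{U_v\}$. As the CW topology is finer, each $\rho_v$ is continuous for it as well and the family $\{\rho_v\}$ is locally finite for it as well, so $\{\rho_v\}$ is simultaneously a partition of unity subordinate to $\{U_v\}$ in both topologies. The associated map $\beta(z)=\sum_v\rho_v(z)\,[v]\in|N|$ is then defined and continuous for both topologies, and by the Nerve Lemma the two resulting maps $\beta_{\mathrm{CW}}\colon C_{\mathrm{CW}}\to|N|$ and $\beta_{\mathrm{m}}\colon C_{\mathrm{m}}\to|N|$ are homotopy equivalences. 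Since $\beta_{\mathrm{m}}\circ\iota=\beta_{\mathrm{CW}}$ as set maps and $\beta_{\mathrm{m}}$ has a homotopy inverse, $\iota$ is homotopic to a homotopy equivalence, hence is itself a homotopy equivalence. (Alternatively, one can show $\iota$ is a weak homotopy equivalence by simplicial approximation into $C_{\mathrm{m}}$ — a map from a finite complex into $C_{\mathrm{m}}$ is, after a fine subdivision, homotopic to one whose image meets only finitely many cells of $C$, hence continuous into $C_{\mathrm{CW}}$ — and then invoke Whitehead's theorem after observing that $C_{\mathrm{m}}$ has CW homotopy type, for example by the Nerve Lemma again or because metric realisations of complexes are ANRs.)

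The main obstacle is that $C$ is allowed to be infinite-dimensional and not locally finite, so the two topologies differ drastically (a null sequence in an infinite wedge of arcs converges in $C_{\mathrm{m}}$ but not in $C_{\mathrm{CW}}$, and compact subsets of $C_{\mathrm{m}}$ need not lie in finite subcomplexes); all the reductions must therefore go through the nerve and partition-of-unity machinery rather than by passing to finite subcomplexes. The points requiring care in that setting are the continuity of the coordinate functions $b_v$ for the metric topology (where Proposition~\ref{shortest}, and in the cubical case Proposition~\ref{nonemp} and Corollary~\ref{ftau}, are used), the verification that the intersections $\bigcap_{v\in S}U_v$ are genuinely contractible when cells may be arbitrarily high-dimensional, and the paracompactness of the non-locally-finite CW-complex $C_{\mathrm{CW}}$ together with the applicability of the Nerve Lemma to the (not locally finite) cover $\{U_v\}$ in both spaces.
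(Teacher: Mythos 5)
Your overall architecture (cover by open vertex stars, nerve theorem, one partition of unity serving both topologies) is viable and is genuinely different from the paper's argument, which instead builds an explicit homotopy inverse $\phi\colon C'\to C$ from the $3$-fold subdivision and checks directly that it is continuous from the metric topology to the CW topology. But as written your proof contains a concretely false step. The coordinate functions $b_v$ are \emph{not} continuous for the metric topology in general: take $C$ to be an infinite ascending chain of cubes $[0,1]^1\subset[0,1]^2\subset\cdots$ (exactly the configuration Theorem~\ref{thm:cubecomplete} is about) and $v$ the common vertex $0$. The point $x^{(n)}\in[0,1]^n$ with all coordinates $t/\sqrt n$ satisfies $d_C(v,x^{(n)})\le t$, while $b_v(x^{(n)})=(1-t/\sqrt n)^n\to 0$, although $b_v(v)=1$; the same phenomenon occurs for barycentric coordinates in the all-right simplicial case. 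So the sentence ``continuous \ldots also for the metric topology'' is wrong whenever cells of unbounded dimension meet at a vertex, and Proposition~\ref{shortest} cannot repair it. The conclusion you draw from it --- that $U_v$ is metrically open --- is nevertheless true, but it needs a different argument: by Proposition~\ref{epsilon} one has $\epsilon_C(x)>0$, and the local structure results (Proposition~\ref{shortest}, or \cite[I.7.9]{brihae}) show that any $y$ with $d_C(x,y)<\epsilon_C(x)$ has carrier containing the carrier of $x$, so the open star of $v$ contains the $\epsilon_C(x)$-ball about each of its points.

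The other soft spot is the metric-topology contractibility of the intersections $\bigcap_{v\in S}U_v$. You assert that a ``cellwise-linear'' homotopy is continuous for both topologies, but cellwise linearity gives metric continuity for free only when there is a uniform Lipschitz bound, which fails when cell dimensions are unbounded; one must again argue locally, using the fact that points within $\epsilon_C(x)$ of $x$ lie in a common cell with $x$ where the distance is realized, to see that the straight-line contraction to the barycentre of $\mu_S$ is jointly continuous. Moreover the retraction you actually describe (push to $\mu_S$, then contract $\mu_S$) momentarily leaves the set: a point of $\partial\mu_S$ has carrier a proper face of $\mu_S$ and so does not lie in $\bigcap_{v\in S}U_v$; contracting in a single step to the barycentre of $\mu_S$ avoids this. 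Finally, note that you need the nerve theorem in the strong form that the canonical partition-of-unity map itself is a homotopy equivalence (not merely that $C\simeq|N|$); this is standard for numerable good covers but should be cited as such. None of these issues arises in the paper's proof, whose explicit cellular map $\phi$ multiplies distances by at most $3$ and so is uniformly Lipschitz, which is precisely what makes the infinite-dimensional case painless there.
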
 

\begin{proof} 
This is due to Dowker~\cite{dow}.  
In the cubical case, there is an easy combinatorial description of a 
homotopy inverse to the identity map.  Let $C'$ denote 
the subdivision of $C$ in which each $n$-cube is cut into $3^n$
cubes.  There is a function $f$ from the set of cubes of $C'$ to 
the set of cubes of $C$, defined by $f(\sigma)=\tau$ if the closed 
cube $\sigma$ contains the barycentre of $\tau$.  For each $\tau\in C$, 
the set $f^{-1}(\tau)$ contains a unique cube $m(\tau)$ of minimal
dimension.  The cube $m(\tau)$ is the `middle cube' of $\tau$, and 
its dimension is equal to the dimension of $\tau$.  Every cube 
$\sigma\in f^{-1}(\tau)$ has $m(\tau)$ as a face.  Define a map 
$\phi:C'\rightarrow C$ as follows.  If $\sigma=m(\tau)$ for some 
$\tau$, then define $\phi:\sigma\rightarrow \tau$ by expanding all 
distances from the common barycentre of $\sigma$ and $\tau$ by a 
factor of 3.  Otherwise, take $\phi$ to be the composition of the 
orthogonal projection from $\sigma$ to $m(f(\sigma))$ with the map 
already defined from $m(f(\sigma))$ to $f(\sigma)$.

This map $\phi$ is cellular as a map from $C$ to $C$, and multiplies 
distances by a factor of at most 3.  It follows that in either
topology there is a homotopy 
from $\phi$ to the identity map which is linear on each cube.  It 
remains to check that $\phi$ is continuous from the metric topology to 
the CW-topology.  To see this, let $x$ be a point of $C$, and let 
$\sigma$ be the minimal cube 
of $C$ that contains $\phi(x)$.  If $O$ is any open set in $C$ containing 
$\phi(x)$, pick $\epsilon>0$ such that the intersection of the interior of 
$\sigma$ and $O$ contains the open $\epsilon$-ball around $\phi(x)$.  Then 
$\phi^{-1}(O)$ contains the open $\epsilon/3$-ball in $C$ around $x$.
This shows that $\phi$ is continuous.  
\end{proof} 

\begin{remark} 
The set $F_\tau$ defined in Corollary~\ref{ftau} is equal to
$\phi^{-1}(\tau)$.  This fact can be used to give a different proof 
of Corollary~\ref{ftau}. 
\end{remark} 


\section{The CAT($\kappa$) conditions}  
\label{sec:apptwo}

\begin{definition} 
Define a combinatorially CAT(0) cubical complex to be
a simply connected 
cubical complex in which all vertex links are flag.  (It will 
be shown in Theorem~\ref{gromovthm} that a cubical complex is CAT(0) 
if and only if it is combinatorially CAT(0).)  
\end{definition}

\begin{definition} 
A full subcomplex of a simplicial complex $C$ is a subcomplex $D$ such 
that whenever $\sigma\in C$ and each vertex $v$ of $\sigma$ is in
$D$, then $\sigma$ is in $D$.  
A combinatorially convex subcomplex of a cubical complex $C$ is a
connected subcomplex $D$ such that for each vertex $v$ of $D$, the
link in $D$, $\Lk_D(v)$ is a full subcomplex of $\Lk_C(v)$.  
\end{definition} 

\begin{proposition} 
Every finite subcomplex of a simplicial complex $C$ is contained
in some finite full subcomplex $D$.  If $C$ is connected, then $D$ 
may be taken to be connected.   
\end{proposition}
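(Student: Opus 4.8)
The plan is straightforward and essentially combinatorial. Given a finite subcomplex $K$ of $C$, let $V_0$ be the (finite) set of vertices of $K$, and let $D_0$ be the full subcomplex of $C$ spanned by $V_0$, that is, the collection of all simplices $\sigma\in C$ every vertex of which lies in $V_0$. First I would note that $D_0$ is finite: a simplex of $C$ is determined by its vertex set, and there are only finitely many subsets of the finite set $V_0$. Next, $D_0$ contains $K$, since every simplex of $K$ has all of its vertices in $V_0$. Finally, $D_0$ is full in $C$: its vertex set is exactly $V_0$, because each $v\in V_0$ is a $0$-simplex of $C$ and hence lies in $D_0$; so if $\sigma\in C$ has all of its vertices in $D_0$, then all of those vertices lie in $V_0$, and therefore $\sigma\in D_0$ by definition. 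This proves the first assertion.

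For the second assertion, assume $C$ is connected. If $K$ is empty (equivalently $V_0=\emptyset$), take $D$ to be a single vertex of $C$; this is a finite connected full subcomplex containing $K$, so we are done. Otherwise fix a basepoint $v_*\in V_0$. Since $C$ is connected, for each $v\in V_0$ I would choose a finite edge-path $\gamma_v$ in $C$ from $v_*$ to $v$, and let $V_1$ be the union of $V_0$ with the finitely many vertices occurring on the paths $\gamma_v$ for $v\in V_0$. Then $V_1$ is finite, and I set $D$ to be the full subcomplex of $C$ spanned by $V_1$. By the argument of the previous paragraph, $D$ is a finite full subcomplex of $C$, and it contains $K$ because $V_0\subseteq V_1$.

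It remains to check that $D$ is connected. Each edge of a path $\gamma_v$ has both of its endpoints in $V_1$, and so $\gamma_v$ lies entirely in $D$; hence every vertex of $V_0$, as well as every vertex occurring on some $\gamma_v$, is joined to $v_*$ by an edge-path inside $D$. Thus every vertex of $D$ is connected to $v_*$ through edges of $D$, so the $1$-skeleton of $D$ is connected, and therefore $D$ itself is connected, since a simplicial complex is connected precisely when its $1$-skeleton is. The only place requiring a little care is the bookkeeping that the vertex set of the full subcomplex spanned by a finite set $S$ is exactly $S$, which is used both to see that $D$ is full and to control its vertex set; beyond that there is no real obstacle.
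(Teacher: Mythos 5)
Your proof is correct and follows essentially the same route as the paper: take the full subcomplex spanned by a finite vertex set, and for connectedness first enlarge that vertex set by finitely many edge paths (the paper joins the components of the subcomplex, you join everything to a basepoint — an immaterial difference). Nothing further is needed.
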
 

\begin{proof} 
Let $C'$ be a finite subcomplex of $C$, and let $D$ be the full 
subcomplex of $C$ with the same vertex set as $C'$.  This is a 
finite full subcomplex since its vertex set is finite.  If $C'$ 
is connected, then so is $D$.  In the case when $C$ is connected, 
any finite subcomplex $C''$ is contained in a finite connected 
subcomplex $C'$ (e.g., the union of $C''$ and a choice of edge 
paths joining the components of $C''$), and so the extra claim 
in this case follows.  
\end{proof} 

\begin{theorem} \label{sagthm}
Every finite subcomplex of a combinatorially CAT(0) cubical complex is
contained in a finite combinatorially convex subcomplex.  
\end{theorem}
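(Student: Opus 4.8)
The plan is to invoke Sageev's combinatorial theory of hyperplanes in cube complexes satisfying Gromov's link condition~\cite{sag}; this applies here because a combinatorially CAT(0) cubical complex is by definition simply connected with flag vertex links. First I would recall the facts I need. In such a complex $C$, each hyperplane $H$ is two-sided, the complement of $H$ has exactly two components, and the closures of these components are combinatorially convex subcomplexes $H^{+}$ and $H^{-}$, the \emph{halfspaces} bounded by $H$. Every vertex of $C$ lies in exactly one halfspace of $H$. A cube of $C$ that is cut by $H$ has vertices in both halfspaces, while a cube not cut by $H$ lies inside a single halfspace. We say $H$ \emph{separates} two vertices $u,v$ if they lie in different halfspaces of $H$, and $H$ separates a set $V$ of vertices if $V$ meets both halfspaces; the combinatorial edge-path distance $d_C(u,v)$ equals the (finite) number of hyperplanes separating $u$ from $v$. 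Finally, a combinatorial geodesic between two vertices of a combinatorially convex subcomplex stays inside it.

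Given a finite subcomplex $K$ of $C$ with vertex set $V$, I would set $D$ equal to the intersection of all halfspaces of $C$ that contain $V$. An intersection of subcomplexes is a subcomplex, and since the link of a vertex in an intersection is the intersection of the links, and an intersection of full subcomplexes of a fixed link is again full, $D$ satisfies the link-fullness condition. It contains $V$, so is non-empty, and it is connected: any two of its vertices are joined by a combinatorial geodesic of $C$, which lies in every halfspace containing its endpoints, hence in $D$. Thus $D$ is combinatorially convex. To see $K\subseteq D$, let $\sigma$ be a cube of $K$; its vertices lie in $V$, so no hyperplane cutting $\sigma$ can have $V$ entirely on one side. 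Hence, for each halfspace $J\supseteq V$, the bounding hyperplane of $J$ does not cut $\sigma$, so $\sigma$ lies in one of its halfspaces, and that halfspace must be $J$ because the vertices of $\sigma$ lie in $V\subseteq J$. Therefore $\sigma\subseteq D$.

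The crux is finiteness of $D$. Let $\mathcal{H}$ be the set of hyperplanes that separate $V$. Each member of $\mathcal{H}$ separates some pair of vertices of $V$, and the hyperplanes separating a fixed pair $v,v'$ number $d_C(v,v')<\infty$; since $V$ has only finitely many pairs, $\mathcal{H}$ is finite. I would then show that the map sending a vertex $w$ of $D$ to the tuple recording, for each $H\in\mathcal{H}$, which halfspace of $H$ contains $w$, is injective on the vertices of $D$. Indeed, if distinct vertices $w,w'$ of $D$ gave the same tuple, some hyperplane $H$ would separate $w$ from $w'$; then $H\notin\mathcal{H}$ (the tuples agree on $\mathcal{H}$), but in that case $V$, and hence $D$, lies in a single halfspace of $H$, contradicting that $H$ separates $w$ from $w'$. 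So $D$ has at most $2^{|\mathcal{H}|}$ vertices, and since each cube of a cubical complex is determined by its vertex set, $D$ has finitely many cubes.

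The main obstacle is not any single computation but assembling and correctly applying Sageev's structural results — that halfspaces are two-sided, separating, and combinatorially convex, and that the combinatorial metric counts separating hyperplanes — together with the care that a bound on the combinatorial diameter of $D$ is by itself not enough, since $C$ need not be locally finite; the essential point is the injection of the vertices of $D$ into the finite set $\prod_{H\in\mathcal{H}}\{H^{+},H^{-}\}$.
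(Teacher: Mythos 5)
Your proposal is correct and follows essentially the same route as the paper: take $D$ to be the intersection of all halfspaces containing the vertex set of the finite subcomplex, observe it is combinatorially convex as an intersection of combinatorially convex halfspaces, bound the number of hyperplanes separating the (finitely many) pairs of vertices, and conclude $D$ has at most $2^{|\mathcal{H}|}$ vertices because a vertex is determined by the halfspaces containing it. The extra details you supply (that $K\subseteq D$, connectedness via combinatorial geodesics, and the explicit injectivity argument) are just a fuller write-up of the same argument.
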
 

\begin{proof} 
This uses Sageev's theory of hyperplanes~\cite{sag}.  A hyperplane in 
a combinatorially 
CAT(0) cubical complex $C$ is an equivalence class of directed edges 
under the transitive closure of the relation `there exists a square in 
which the two directed edges are parallel'.  The corresponding open 
hyperplane neighbourhood is the union of all open cubes whose closures
contain an
edge of the hyperplane.  If $H$ is an open hyperplane neighbourhood, 
then $C-H$ is a cubical complex with the same vertex set as $C$.  If 
$v$ is a vertex of $C$, then $\Lk_{C-H}(v)$ is the full subcomplex of 
$\Lk_C(v)$ with vertex set the (undirected) edges containing $v$ that 
are not contained in $H$.  We define the opposite hyperplane to $H$ to 
be the hyperplane which contains the reverse of every oriented edge in
$H$.  We quote four results which can easily be deduced from Sageev's 
paper~\cite{sag}.  For any square $\sigma\in C$, a hyperplane
contains at most two of the eight directed edges of
$\sigma$~\cite[lemma~4.8 and corollary~4.9]{sag}.   
For each open hyperplane neighbourhood $H$, $C-H$ has two
components~\cite[theorem~4.10]{sag} (this implies that no hyperplane 
is equal to its own opposite).  Call these two components the half-spaces
defined by $H$.   If the shortest 1-skeleton path between vertices $v$
and $w$ has length $n$, then there are exactly $n$ opposite pairs of 
hyperplanes in $C$ 
that separate $v$ and $w$ (the lower bound comes
from~\cite[theorem~4.13]{sag}, and the upper bound
from~\cite[theorem~4.6]{sag}).  A vertex $v$ of $C$ is
uniquely determined by the collection of half-spaces in which it
lies (follows from \cite[theorems 4.6~and~4.10]{sag}, since if 
$e$ is an edge on any 1-skeleton geodesic from $v$ to $w$ then 
the hyperplane containing $e$ separates $v$~and~$w$).  

Given a finite subcomplex $C'$ of $C$, let $V'$ be the vertex set of 
$C'$, and let $D$ be the intersection of all half-spaces of $C$ that 
contain $V'$.  As an intersection of combinatorially convex
subcomplexes, $D$ is combinatorially convex.  Suppose that $V'$
contains $n$ elements and that each pair $v,w\in C$ can be joined by a
1-skeleton path of length at most $N$ in $C$.  Then there are at most 
$M=N{\binom{n}{2}}$ hyperplanes of $C$ that separate points of $V'$.  
Since vertices of $C$ are uniquely determined by the half-spaces that
contain them, it follows that $D$ contains at most $2^M$ vertices.  
\end{proof} 


Before proving the main theorems of this section, we recall a
definition from~\cite[I.7.8]{brihae}.  Suppose that $C$ is a 
simplicial or cubical complex in which cells have been given 
compatible metrics (i.e., so that the inclusion of each face 
of each cell is an isometry).  Under these circumstances, 
for $x$ a point of $C$ and $\sigma$ any cell of $C$ containing 
$x$, define $\epsilon(x,\sigma)$ to be the infimum of the 
distances from $x$ to points in closed faces of $C$ that do not 
contain $x$.  Now define $\epsilon_C(x)$ to be the infimum of 
the numbers $\epsilon(x,\sigma)$ as $\sigma$ ranges over all 
closed cells of $C$ that contain $x$.  Informally, we may think 
of $\epsilon(x)$ as the infimum of the distances from $x$ to 
points $y$ in closed cells that do not contain $x$.  

\begin{proposition}\label{epsilon} 
Let $C$ be either an all-right simplicial complex or a cubical 
complex.  Then for any point $x$ of $C$, we have that 
$\epsilon_C(x)=\epsilon_\sigma(x)$, where $\sigma$ is the unique
closed cell of $C$ such that $x$ is in the interior of $\sigma$. 
In particular, $\epsilon_C(x)>0$ for all $x$.  
\end{proposition}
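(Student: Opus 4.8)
The plan is to prove the stronger assertion that
$\epsilon(x,\tau)=\epsilon(x,\sigma)$ for \emph{every} closed cell $\tau$ of $C$ containing $x$, where $\sigma$ is the (unique) minimal closed cell containing $x$. Granting this, the proposition follows immediately: since $x$ lies in the interior of $\sigma$, the only face of $\sigma$ containing $x$ is $\sigma$ itself, so $\epsilon_\sigma(x)=\epsilon(x,\sigma)$; and then $\epsilon_C(x)=\inf_{\tau\ni x}\epsilon(x,\tau)=\epsilon(x,\sigma)=\epsilon_\sigma(x)$. Positivity is then clear, because $\epsilon(x,\sigma)$ is the distance from the interior point $x$ to the union $\partial\sigma$ of the finitely many proper closed faces of $\sigma$, which is a closed set not containing $x$. (When $\sigma$ is a single vertex, $\partial\sigma=\emptyset$ and this last statement is to be read with the usual convention; so I will assume $\dim\sigma\geq 1$ throughout.)

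To prove the claim, fix a closed cell $\tau$ with $x\in\tau$; by minimality $\sigma$ is a face of $\tau$. Every distance that will occur turns out to be less than $1$ in the cubical case and less than $\pi/2$ in the simplicial case, so by Proposition~\ref{shortest} each such distance is realised by a straight segment lying in the minimal cell containing its endpoints; in particular both $\epsilon(x,\tau)$ and $\epsilon(x,\sigma)$ may be computed inside a single cell, and it suffices to argue inside one cube, respectively one all-right simplex. In the cubical case, identify $\tau$ with $[0,1]^n$ and let $S$ be the set of coordinates that are constant on $\sigma$, so that $x_i\in\{0,1\}$ for $i\in S$, $x_i\in(0,1)$ for $i\notin S$, and $\epsilon(x,\sigma)=\min_{i\notin S}\min(x_i,1-x_i)$. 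A closed face $\rho$ of $\tau$ with $x\notin\rho$ must fix some coordinate $i$ to a value $c\ne x_i$; if $i\notin S$ this forces $d(x,\rho)\geq\min(x_i,1-x_i)\geq\epsilon(x,\sigma)$, and if $i\in S$ it forces $d(x,\rho)\geq 1>\epsilon(x,\sigma)$. So $\epsilon(x,\tau)\geq\epsilon(x,\sigma)$, and equality is attained by the facet of $\tau$ extending the facet of $\sigma$ nearest to $x$.

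For the all-right simplicial case, identify the vertices of $\tau$ with distinct standard basis vectors $e_i$ ($i\in U$), so a point of $\tau$ is a unit vector with non-negative coordinates supported on $U$; then $\sigma$ corresponds to a subset $S\subseteq U$ and $x=\sum_{i\in S}x_ie_i$ with each $x_i>0$. For $j\in S$, minimising $d(x,v)=\arccos\langle x,v\rangle$ over $v\in\tau$ with $v_j=0$ amounts to maximising $\langle x,v\rangle=\sum_{i\in S\setminus\{j\}}x_iv_i$, which by Cauchy--Schwarz equals $\sqrt{1-x_j^2}$; hence the subcomplex $\{v\in\tau:v_j=0\}$ is at distance $\arccos\sqrt{1-x_j^2}=\arcsin(x_j)$ from $x$, and the same computation inside $\sigma$ gives $\epsilon(x,\sigma)=\min_{j\in S}\arcsin(x_j)$. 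Any closed face $\rho$ of $\tau$ with $x\notin\rho$ omits some vertex $e_j$ with $j\in S$, so $\rho\subseteq\{v:v_j=0\}$ and $d(x,\rho)\geq\arcsin(x_j)\geq\epsilon(x,\sigma)$; equality is attained by the facet obtained by dropping the appropriate vertex. This establishes $\epsilon(x,\tau)=\epsilon(x,\sigma)$ in both cases.

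The step I expect to cause the most friction is this last, simplicial distance-to-a-face computation: unlike in a cube, the distance from a point to a face of an all-right simplex is a constrained optimisation on the round sphere rather than a coordinatewise projection, so one must verify that the Cauchy--Schwarz optimum is attained at an admissible point of the simplex — namely $\hat v=\bigl(\sum_{i\in S\setminus\{j\}}x_i^2\bigr)^{-1/2}\sum_{i\in S\setminus\{j\}}x_ie_i$, which has non-negative coordinates supported on $S\setminus\{j\}\subseteq U$ — and that the resulting spherical distances are genuinely realised inside a single cell, which is exactly the point at which Proposition~\ref{shortest} is invoked.
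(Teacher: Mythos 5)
Your argument is correct, and it is in fact more than the paper supplies: the paper's entire proof of Proposition~\ref{epsilon} is the phrase ``An easy exercise'', so there is no official argument to compare against, and your explicit coordinate computations in a unit cube and in an all-right spherical simplex are a perfectly good way of doing that exercise. The cubical bound $d(x,\rho)\geq\min(x_i,1-x_i)$ (or $\geq 1$ when the separating coordinate is one frozen on $\sigma$) and the spherical computation $d\bigl(x,\{v_j=0\}\bigr)=\arccos\sqrt{1-x_j^2}=\arcsin(x_j)$, with the Cauchy--Schwarz maximiser $\hat v$ checked to be an admissible point of the face, are both right, and they do give $\epsilon(x,\tau)=\epsilon(x,\sigma)$ for every cell $\tau$ containing $x$ once $\dim\sigma\geq 1$; the appeal to Proposition~\ref{shortest} is not really needed if, as in the reference [I.7.8] of Bridson--Haefliger, the quantities $\epsilon(x,\tau)$ are defined via intrinsic cell distances, but it does no harm. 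The only loose end is the vertex case you set aside: there the displayed equality is only sensible with the convention $\inf\emptyset=+\infty$ (and for a non-isolated vertex it is really the positivity statement, not the equality, that has content), and you should note that your same computations give $\epsilon(x,\tau)=1$, respectively $\pi/2$, for every positive-dimensional cell $\tau$ containing a vertex $x$ (any face of $\tau$ missing $x$ lies in the opposite facet), so the conclusion $\epsilon_C(x)>0$ -- which is all the paper uses later, in Theorems~\ref{flag} and~\ref{gromovthm} -- holds in that case as well.
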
 

\begin{proof} 
An easy exercise.
\end{proof}

\begin{theorem}\label{flag}  
Let $C$ be an all-right simplicial complex.  Then $C$ is CAT(1) if and
only if $C$ is flag.  
\end{theorem}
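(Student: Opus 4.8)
The plan is to prove the two implications separately, since they call for quite different arguments. For ``$C$ is $\CAT(1)$ $\Rightarrow$ $C$ is flag'' the point is that non-flagness is witnessed by a finite clique that fails to span a simplex, out of which one extracts a short closed local geodesic, contradicting a $\CAT(1)$ inequality. For ``$C$ is flag $\Rightarrow$ $C$ is $\CAT(1)$'' I would reduce to the classical finite-dimensional version of Gromov's lemma (a \emph{finite-dimensional} all-right simplicial complex is $\CAT(1)$ iff it is flag), which I take as known. Throughout I use the following standard facts about the all-right metric, all valid in arbitrary dimension since $\epsilon_C(x)>0$ for every $x$ (Proposition~\ref{epsilon}): the link $\link_C(v)$ of a vertex, with its induced all-right metric, is an all-right simplicial complex of dimension one less, and is $\CAT(1)$ whenever $C$ is (it is a dense subspace of the space of directions $\Sigma_vC$, which is $\CAT(1)$ because $C$ is); any two distinct vertices of an all-right simplicial complex lie at distance $\pi/2$ if they span an edge and at distance at least $\pi$ if they do not; and in a flag complex every link of a simplex is flag, while any full subcomplex of a flag complex is flag.

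\textbf{The implication ``$\CAT(1)\Rightarrow$ flag''.} Suppose $C$ is $\CAT(1)$ but not flag, and choose a clique $\{v_0,\dots,v_{k-1}\}$ of vertices of $C$ not spanning a simplex, with $k\geq 3$ minimal; then every proper subset of it spans a simplex of $C$. If $k\geq4$, then $v_1,\dots,v_{k-1}$ form a clique in $\link_C(v_0)$ not spanning a simplex there, and $\link_C(v_0)$ is an all-right simplicial complex that is again $\CAT(1)$; replacing $C$ by $\link_C(v_0)$ strictly decreases $k$, so after finitely many steps we may assume $k=3$. Now the full subcomplex of $C$ on $\{v_0,v_1,v_2\}$ consists of three edges, giving a loop $\gamma$ of length $3\pi/2$. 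At each $v_i$ the two directions of $\gamma$ are distinct vertices of $\link_C(v_i)$ not spanning an edge there (such an edge would be the missing $2$-simplex $v_0v_1v_2$), so the angle between them is at least $\pi$, and $\gamma$ is a closed local geodesic. Cutting $\gamma$ in half produces two distinct local geodesics of length $3\pi/4<\pi$ with the same endpoints, which lie at distance less than $\pi$; since in a $\CAT(1)$ space a local geodesic of length less than $\pi$ is the unique geodesic between its endpoints, this is a contradiction. Hence $C$ is flag.

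\textbf{The implication ``flag $\Rightarrow\CAT(1)$''.} Assume $C$ is flag but not $\CAT(1)$. Then there is a geodesic triangle $\Delta$ in $C$ of perimeter less than $2\pi$ (so each side is a geodesic of length less than $\pi$) and points $p,q$ on the sides of $\Delta$ with $d_C(p,q)$ strictly larger than the distance between the corresponding points of the comparison triangle on the round sphere. \emph{The key step} is that this whole configuration lies in a finite subcomplex $K$ of $C$, because a geodesic of length less than $\pi$ in an all-right simplicial complex meets only finitely many closed cells. Granting this, let $C_\alpha$ be the full subcomplex of $C$ on the finitely many vertices occurring in $K$; then $C_\alpha$ is finite, hence finite-dimensional, and is flag, so by the finite-dimensional case it is $\CAT(1)$. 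Since $C_\alpha$ carries the induced length metric, $d_{C_\alpha}\geq d_C$; each side of $\Delta$ lies in $C_\alpha$ and its length realises $d_C$ between its endpoints, hence also realises $d_{C_\alpha}$ between them, so $\Delta$ is a geodesic triangle of $C_\alpha$ with the same comparison triangle, and $d_{C_\alpha}(p,q)\geq d_C(p,q)$ still exceeds the comparison distance. This contradicts $C_\alpha$ being $\CAT(1)$, so $C$ is $\CAT(1)$.

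\textbf{The main obstacle.} The only nontrivial ingredient is the key step, that a geodesic of length less than $\pi$ (and, for the parallel treatment of closed local geodesics, a local geodesic loop of length less than $2\pi$) lies in a finite subcomplex; this is where the infinite-dimensional difficulty is concentrated. I would prove it by following the sequence of closed cells that a unit-speed geodesic traverses: between successive changes of carrier the geodesic is a great-circle arc inside a single simplex, and at each crossing of a face the local-geodesic condition forces the incoming and outgoing directions to make angle at least $\pi$ in the (all-right) link of that face. Combining this with the bound that non-adjacent link vertices are at least $\pi$ apart, one shows that the number of distinct cells met is bounded in terms of the length alone. (An alternative would pass to the cube complex $D$ with $\link_{v_0}(D)=C$ from the proof of Theorem~\ref{thm:cubecomplete} and use Sageev's hyperplane theory, as in Theorem~\ref{sagthm}, but the direct estimate seems more transparent.) Everything else — the behaviour of links, and the elementary $\CAT(1)$ facts invoked above — is standard.
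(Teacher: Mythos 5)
Your ``CAT(1) $\Rightarrow$ flag'' direction is essentially the paper's argument (minimal missing clique, induction on links, a closed local geodesic of length $3\pi/2$ in the base case), and is fine apart from one loose point: justifying ``links of a CAT(1) complex are CAT(1)'' by saying the link is a dense subspace of the space of directions does not work as stated (a dense subspace of a CAT(1) space need not even be geodesic); the paper instead quotes \cite[II.5.2]{brihae}, checking the alternative hypotheses of \cite[II.5.3]{brihae} via Proposition~\ref{epsilon}. The real problems are in the ``flag $\Rightarrow$ CAT(1)'' direction, where there are two genuine gaps. First, your contradiction argument starts from ``$C$ not CAT(1), hence there is a geodesic triangle of perimeter $<2\pi$ violating the comparison inequality.'' But CAT(1), in the sense used here (\cite[II.1.2]{brihae}), also requires that $C$ be $\pi$-geodesic, i.e.\ that any two points at distance less than $\pi$ are joined by a geodesic. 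Failure of CAT(1) can consist solely of non-existence of such geodesics, and in the infinite-dimensional, non-complete, non-locally-compact setting existence of geodesics is exactly the non-trivial issue; your proposal never addresses it. The paper devotes the first half of its general-case argument to precisely this: any string of length $<\pi$ lies in a finite full subcomplex $D$, which is flag and finite-dimensional, hence CAT(1), so a geodesic exists in $D$; and Theorem~\ref{localisom} (proved first in the finite-dimensional case by the doubling trick) is then used to show that this geodesic in $D$ is already a geodesic in $C$, since any shorter string would sit inside a larger finite full subcomplex and contradict distance preservation.

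Second, your ``key step'' --- that a geodesic of length less than $\pi$ meets only finitely many closed cells --- is asserted with only a sketch, and it is exactly where the infinite-dimensional difficulty is concentrated: the standard finiteness arguments (taut strings crossing finitely many simplices, as in \cite[I.7]{brihae}) rely on there being finitely many shapes, which fails here, and the pointwise bound $\epsilon_C(x)>0$ of Proposition~\ref{epsilon} is not obviously uniform along a geodesic, so a compactness argument does not immediately close this. The paper's proof is structured specifically to avoid ever needing such a statement: it works throughout with strings, which involve only finitely many cells by definition, places them in finite full subcomplexes, and transfers geodesics and comparison inequalities from those finite subcomplexes to $C$ via Theorem~\ref{localisom}. (Your remaining comparison-triangle argument --- pass to the finite full subcomplex on the vertices met, note it is flag hence CAT(1), and check the sides remain geodesics there --- is correct and parallels the paper's second paragraph, but it only becomes a proof once both of the above gaps are filled.)
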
 

\begin{proof} 
The case when $C$ is finite dimensional is proved
in~\cite[II.5.18]{brihae}.  The argument given there to show that
CAT(1) implies flag generalizes easily to the infinite-dimensional
case.  If $C$ is not flag, take a `missing simplex' of minimal 
dimension, i.e., a subset $v_0,\ldots,v_n$ of the vertex set of $C$, 
where $n$ is at least 2 and is  minimal subject to the property that the set 
$\{v_0\ldots,v_n\}$ does not span a simplex of $C$ but any proper
subset does.  If $n=2$, then the union of the three edges joining 
the vertices $v_0,v_1,v_2$ is a closed local geodesic of length less than 
$2\pi$, showing that $C$ cannot be CAT(1).  For $n>2$, argue by
induction on~$n$.  The link $\Lk_C(v_n)$ contains the `missing simplex' 
$v_0,\ldots,v_{n-1}$ of dimension $n-1$, and so $\Lk_C(v_n)$ is not
CAT(1).  It follows from~\cite[II.5.2]{brihae} that $C$ cannot be
CAT(1).  (In the infinite dimensional case, the hypotheses
of~\cite[II.5.2]{brihae} are not satisfied, but
Proposition~\ref{epsilon} shows that the alternative hypotheses listed
in~\cite[II.5.3]{brihae} do hold.)  

The general case of `flag implies CAT(1)' will be proved below.  
\end{proof}

\begin{theorem}\label{localisom} 
Let $i:D\rightarrow C$ be the inclusion of a full subcomplex in a flag 
simplicial complex.  Then $i$ preserves distances less than $\pi$, in 
the sense that for $x,y\in D$, one has that $d_D(x,y)<\pi$ if and only 
if $d_C(i(x),i(y))<\pi$ and if this holds, then
$d_D(x,y)=d_C(i(x),i(y))$.  
\end{theorem}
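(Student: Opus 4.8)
The plan is to reduce the general (possibly infinite-dimensional) statement to the finite-dimensional case, where it can be handled by the standard machinery of $\CAT(1)$ geometry, and then to use the previously established facts about all-right simplicial complexes. First I would observe that, since a path realizing a distance less than $\pi$ is compact, it meets only finitely many cells; more precisely, for any two points $x,y$ in a flag complex with $d(x,y)<\pi$, there is a local geodesic of that length joining them (by completeness of each simplex and the fact that a short enough path can be shortened inside a single simplex, as in Proposition~\ref{shortest}), and this local geodesic lies in a finite subcomplex. Using the Proposition preceding Theorem~\ref{sagthm}, any finite subcomplex of $D$ (respectively of $C$) sits inside a finite \emph{full} subcomplex of $D$ (respectively of $C$); intersecting these we may assume we are working with finite full subcomplexes $D_0\subseteq C_0$ with $D_0=D\cap C_0$ and $C_0$ flag (fullness in a flag complex is inherited, and a full subcomplex of a flag complex is flag).

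Next I would invoke the finite-dimensional theory. A finite full subcomplex of a flag complex is flag and finite-dimensional, hence $\CAT(1)$ by the finite-dimensional case of Theorem~\ref{flag} (i.e.\ \cite[II.5.18]{brihae}). For a $\CAT(1)$ space, geodesics of length less than $\pi$ are unique and depend continuously on their endpoints, and a full $\CAT(1)$ subcomplex $D_0$ of a $\CAT(1)$ complex $C_0$ is convex for distances less than $\pi$: this is exactly the content of \cite[II.4.14 / II.5.x]{brihae} on convexity of full subcomplexes, or can be proved directly by the standard argument that a local geodesic in $C_0$ between two points of $D_0$ that are less than $\pi$ apart must stay in $D_0$, because at each vertex the incoming and outgoing directions are antipodal in the vertex link and fullness forces the relevant simplex of the link to lie in $\Lk_{D_0}$. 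Hence $d_{D_0}(x,y)=d_{C_0}(x,y)$ whenever either side is less than $\pi$, and the two-sided `$<\pi$' biconditional holds at the level of $C_0$ and $D_0$.

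Finally I would promote this back to $D$ and $C$. Since $D\hookrightarrow C$ is the inclusion of a full subcomplex, distances can only decrease: $d_C(i(x),i(y))\le d_D(x,y)$. Conversely, given $x,y\in D$ with $d_C(i(x),i(y))<\pi$, choose a finite subcomplex of $C$ containing a length-$d_C(i(x),i(y))$ local geodesic from $x$ to $y$, enlarge it to a finite full subcomplex $C_0$ of $C$, and set $D_0=D\cap C_0$; then $x,y\in D_0$ and by the previous paragraph $d_{D_0}(x,y)=d_{C_0}(x,y)\le d_C(i(x),i(y))<\pi$, whence $d_D(x,y)\le d_{D_0}(x,y)<\pi$ and, reversing roles and using monotonicity in both directions, $d_D(x,y)=d_C(i(x),i(y))$. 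The one subtlety I expect to be the main obstacle is making the `local geodesic lies in a finite subcomplex' step fully rigorous in the infinite-dimensional setting — one needs that a rectifiable path of length $<\pi$ (or $<1$ in the cubical case) can be taken to be a concatenation of finitely many straight segments, each inside a single cell, which follows from Proposition~\ref{shortest} together with local compactness of each closed cell, but care is required because $C$ itself need not be locally compact; the point is that the \emph{image} of such a short path is contained in the star of finitely many cells, which is what is actually used.
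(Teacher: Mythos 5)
Your reduction of the general case to finite complexes is fine in outline (take a short string, enlarge to a finite full subcomplex $C_0$ of $C$, intersect with $D$; this does give a full subcomplex $D_0$ of the flag complex $C_0$, and monotonicity of distances plus an $\epsilon$-argument with strings then transfers equality of distances back to $D\subseteq C$ — indeed you do not even need the existence of local geodesics that you worry about at the end, since the pseudo-metric is defined as an infimum over strings and every string lies in a finite subcomplex). The genuine gap is in the finite-dimensional case itself, which is the entire content of the theorem and which your proposal asserts rather than proves. The statement that a full $\CAT(1)$ subcomplex $D_0$ of a finite all-right $\CAT(1)$ complex $C_0$ is convex for distances less than $\pi$ is not \cite[II.4.14]{brihae} or any other result in that book (your ``II.5.x'' is a placeholder, not a citation), and the ``standard argument'' you sketch does not work: a geodesic in $C_0$ between two points of $D_0$ need not pass through any vertex at all, and nothing in your sketch rules out its leaving $D_0$ through the interior of a higher-dimensional simplex of $C_0$ whose faces meet $D_0$; the assertion that ``fullness forces the relevant simplex of the link to lie in $\Lk_{D_0}$'' is exactly the $\pi$-convexity you are trying to establish, restated one dimension down. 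So the crux of Theorem~\ref{localisom} is left unproved.

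For comparison, the paper closes precisely this gap with a doubling trick that avoids any local analysis of geodesics: form $C*_DC$, the double of $C$ along $D$; the retraction $C*_DC\rightarrow C$ shows each copy of $C$ embeds isometrically; $C*_DC$ is again flag, hence (in the finite-dimensional case, by \cite[II.5.18]{brihae}) $\CAT(1)$; the swap of the two copies is an isometry fixing $D$ pointwise, so by uniqueness of geodesics of length less than $\pi$ in a $\CAT(1)$ space the geodesic joining $x,y\in D$ must lie in $D$, giving $d_D(x,y)=d_C(x,y)$. (The paper then bootstraps: this finite-dimensional case is used to prove the general case of Theorem~\ref{flag} by exactly the finite-full-subcomplex reduction you propose, after which the same doubling argument covers Theorem~\ref{localisom} in full generality.) If you replace your appeal to ``convexity of full subcomplexes'' by this doubling argument, or by a correct and complete proof of $\pi$-convexity in the finite case, your overall structure becomes a valid alternative organization of the proof; as written, the key step is missing.
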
 

\begin{proof} The proof will rely on Theorem~\ref{flag}, so for now 
it will only cover the case when $C$ is finite-dimensional.  This 
case will then be used to prove the general case of
Theorem~\ref{flag}, and then the proof given below will cover all 
cases.  

Write $C*_DC$ for the double of $C$ along $D$, i.e., the quotient
space obtained from two copies of $C$ by identifying the two copies of
$D$.  Each of the two inclusions $C\rightarrow C*_DC$ is a map that 
does not increase distance (since a string in $C$ maps to a string in 
$C*_DC$ of the same length).  The composite map 
$$C\rightarrow C*_DC\rightarrow C*_CC = C$$ 
is the identity, and hence each of the inclusions $C\rightarrow C*_DC$
is an isometric embedding.  

It is easily seen that $C*_DC$ is flag.  By the cases of
Theorem~\ref{flag} that are already known, if $C$ is
finite-dimensional, then $C$, $D$ and $C*_DC$ are all CAT(1).  
The map $i:D\rightarrow C$ does not increase distance.  The self-map
of $C*_DC$ that swaps the two copies of $C$ is an isomorphism and so 
is an isometry.  Suppose 
that $x,y\in D$ are such that $d_C(i(x),i(y))<\pi$.  Then
by~\cite[II.1.4(1)]{brihae}, there is a unique geodesic in $C*_DC$ 
from $x$ to $y$.  If the geodesic did not lie entirely inside $D$, 
there would be at least two such geodesics.  Hence the geodesic 
does lie inside $D$, and so $d_D(x,y)=d_C(x,y)$.  
\end{proof} 

\begin{proof} (Theorem~\ref{flag}, general case.) Let $C$ be a flag
  complex.  Given any two points $x,y\in C$ at distance less than
  $\pi$, take a string $\gamma=x_0,\ldots,x_m$ in $C$ from $x$ to $y$
  whose length is less than $\pi$.  Now let $D$ be a finite full
  subcomplex of $C$ that contains the path defined by the string
  $\gamma$.  Then $d_D(x,y)<\pi$ since $D$ contains the string
  $\gamma$.  Hence there is a geodesic string $\gamma'=
  y_0,\ldots,y_n$ from $x$ to $y$ in $D$.  Claim that the string
  $\gamma'$ is also a geodesic in $C$.  To see this suppose not.  In
  that case $C$ would contain a shorter string $\gamma''$.  Now let
  $D'$ be a finite full subcomplex of $C$ containing both $D$ and the
  path defined by $\gamma''$.  By~Theorem~\ref{localisom}, the
  inclusion of $D$ into $D'$ preserves distances less than $\pi$.
  This contradiction shows that $\gamma'$ is a geodesic in $C$, and so
  any two points at distance less than $\pi$ are joined by a geodesic.

  Now suppose one is given a geodesic
  triangle in $C$ of perimeter less than $2\pi$, and two points 
  $x$ and $y$ on the perimeter of this triangle.  Then $x$ and $y$ are
  separated by less than $\pi$ so there exists a geodesic between
  them.  Now the three sides of the triangle together with the
  geodesic from $x$ to $y$ are contained in a finite full subcomplex
  $D$ of $C$.  Since $D$ is flag, the distance from $x$ to $y$ in $D$
  is no greater than the distance between corresponding points on the
  perimeter of a triangle with the same side lengths in the standard
  sphere $S^2$.  But by the argument given above, $d_D(x,y)=d_C(x,y)$ 
  and so the CAT(1) inequality holds.  
\end{proof} 

\begin{proof} (Theorem~\ref{localisom}, general case.)  The proof
  given above in the finite-dimensional case is valid in general now
  that the general case of Theorem~\ref{flag} is known.  
\end{proof} 

\begin{theorem}\label{gromovthm} 
 (Gromov) A simply connected cubical complex is CAT(0) if and
  only if its vertex links are flag complexes.  
\end{theorem}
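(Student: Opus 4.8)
The proof will split into the two implications. The direction "CAT(0) implies all vertex links are flag" is the easier one and largely local: if some vertex link $\Lk_C(v)$ fails to be flag, then (by Theorem~\ref{flag}) it fails to be CAT(1), and one transfers a short closed local geodesic in the link to a short closed local geodesic through $v$ in $C$, contradicting the fact that a CAT(0) space admits no nonconstant closed local geodesics. One has to be mildly careful in the infinite-dimensional case because the standard link reference \cite[II.5.2]{brihae} assumes finite dimension, but Proposition~\ref{epsilon} supplies the replacement hypothesis \cite[II.5.3]{brihae}, exactly as in the proof of Theorem~\ref{flag}. So I would dispatch this direction quickly.

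**The substantive direction.** Assume $C$ is simply connected with all vertex links flag; we must show $C$ is CAT(0). The strategy I would follow is reduction to the finite-dimensional case, which is the classical Gromov criterion \cite[II.5.20]{brihae}. Concretely: $C$ is a geodesic space (the path-metric of Section~\ref{sec:appone} is a length metric, and one checks completeness is not needed here, or one passes to balls). To verify the CAT(0) inequality it suffices to verify it for an arbitrary geodesic triangle $\Delta$ with vertices $x,y,z$, together with comparison points. The whole triangle, being compact, meets only finitely many cubes, and each cube is finite-dimensional; the difficulty is that the union of those cubes need not be convex or CAT(0) on its own. This is where Theorem~\ref{sagthm} enters: using Sageev's hyperplane theory, every finite subcomplex of a combinatorially CAT(0) cubical complex sits inside a \emph{finite} combinatorially convex subcomplex $D$. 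A finite combinatorially convex (hence full-link) subcomplex of a simply connected flag-link cubical complex is itself simply connected with flag links, so by the finite-dimensional Gromov theorem $D$ is CAT(0).

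**Gluing the pieces together.** Having found such a finite CAT(0) subcomplex $D$ containing the triangle $\Delta$, I still need two things. First, that geodesics of $C$ lying in $D$ agree with geodesics of $D$ — i.e., that $D$ is totally geodesic (convex) in $C$, or at least that the sides of $\Delta$, computed in $C$, coincide with the $D$-geodesics. This is the cubical analogue of Theorem~\ref{localisom}: the inclusion of a combinatorially convex subcomplex into a flag-link cubical complex preserves distances, with the same doubling trick ($C\to C*_D C\to C$, the middle space again having flag links by a local check, so the composite being the identity forces each inclusion to be isometric; non-uniqueness of geodesics in the doubled space then forces any geodesic between points of $D$ to stay in $D$). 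Here uniqueness of geodesics is legitimate once we know the relevant pieces are CAT(0), so one has to bootstrap carefully — prove the distance-preservation statement for finite-dimensional $C$ first, feed it into the finite-dimensional Gromov theorem applied inside $D$, then upgrade. Second, once $\Delta\subseteq D$ and $D$ is CAT(0), the CAT(0) comparison inequality for $\Delta$ holds in $D$ and hence in $C$. Since $\Delta$ was arbitrary, $C$ is CAT(0).

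**Main obstacle.** The delicate point is the bootstrapping order: Theorem~\ref{sagthm} is stated for combinatorially CAT(0) complexes (Gromov's link condition), not CAT(0) complexes, so it is available \emph{before} we know $C$ is CAT(0) — good. But the "convex subcomplex is totally geodesic" lemma is cleanest to state and prove metrically, which seems to require knowing CAT(0) of the ambient pieces; the resolution is exactly the one used for Theorems~\ref{flag} and \ref{localisom} in the simplicial setting — first handle finite-dimensional $C$ purely from \cite[II.5.20]{brihae}, use that to establish the convexity/distance-preservation statement for finite-dimensional flag-link cubical complexes, then run the reduction for infinite-dimensional $C$ where every triangle lives in a finite (a fortiori finite-dimensional) convex subcomplex. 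Organizing these dependencies without circularity, and correctly invoking \cite[II.5.3]{brihae} in place of \cite[II.5.2]{brihae} wherever link arguments appear, is the part that needs genuine care rather than routine checking.
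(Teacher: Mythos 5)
Your overall strategy is the paper's own: for the forward direction, the link condition \cite[II.5.2]{brihae} with the alternative hypotheses of \cite[II.5.3]{brihae} supplied by Proposition~\ref{epsilon}, followed by Theorem~\ref{flag}; for the converse, reduction to a finite combinatorially convex subcomplex $D$ produced by Theorem~\ref{sagthm}, the finite-dimensional criterion \cite[II.5.20]{brihae} applied inside $D$, and the doubling argument of Theorem~\ref{isom} (proved first in the finite-dimensional case) to see that $D$-geodesics remain geodesics in $C$, with the same bootstrapping order.

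There is, however, one genuine gap. You assert that a finite combinatorially convex subcomplex $D$ of a simply connected flag-link cubical complex ``is itself simply connected,'' and you need this both to apply the finite-dimensional Gromov theorem to $D$ and to apply the isometry statement to inclusions $D\subseteq D'$. This is not a formal consequence of fullness of links: flag links only make $D$ locally CAT(0), so only its \emph{universal cover} is known to be CAT(0). Simple connectivity (indeed contractibility) of $D$ is exactly the content of Lemma~\ref{simpconn}, and its proof is not a routine check: one kills the finitely generated group $\pi_1(D)$ by attaching finitely many discs inside $C$, encloses the result in a larger finite combinatorially convex subcomplex $E$ (Theorem~\ref{sagthm} again), lifts $D$ isometrically into the CAT(0) universal cover $\widetilde E$ using the finite-dimensional cases already established, and contracts it there by geodesic contraction. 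Without this, your step ``so by the finite-dimensional Gromov theorem $D$ is CAT(0)'' does not go through. Two smaller points: in the easy direction, a non-CAT(1) link does not literally yield a closed local geodesic of $C$ through $v$ (the cone on a short geodesic loop in the link is a singular cone point, not a closed geodesic in $C$); the correct route is the one you also cite, namely \cite[II.5.2]{brihae}/\cite[II.5.3]{brihae} plus Theorem~\ref{flag}. And you should not assert at the outset that $C$ is geodesic --- ``completeness is not needed, or one passes to balls'' is not adequate for an infinite-dimensional, non-locally-compact length space; existence of geodesics must itself be extracted from the reduction to $D$ via Theorem~\ref{isom}, which is how the paper proceeds before running the comparison argument.
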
 

\begin{proof} 
The finite-dimensional case is~\cite[II.5.20]{brihae}.  If $C$ is
CAT(0), then by~\cite[II.5.2]{brihae}, for each vertex $v$, $\Lk_C(v)$
is CAT(1).  (Just as in the proof of Theorem~\ref{flag}, the
hypotheses of~\cite[II.5.2]{brihae} do not hold in the
infinite-dimensional case, but Proposition~\ref{epsilon} ensures that
the alternative hypotheses listed in~\cite[II.5.3]{brihae} hold.)  
Hence by Theorem~\ref{flag}, each vertex link is a flag
complex.  

The proof of the general case of `flag links implies CAT(0)' 
will be given below.  
\end{proof}

\begin{theorem} \label{isom} 
Let $C$ be a simply-connected cubical complex with all
  vertex links flag, and let $D$ be a combinatorially convex
  subcomplex of $D$.  Then the map $i:D\rightarrow C$ is isometric.  
\end{theorem}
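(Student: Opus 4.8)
The plan is to adapt, to the cubical setting, the argument used for Theorem~\ref{localisom}: realise $D$ as the fixed set of an isometric involution of a larger complex to which Gromov's criterion applies, and then exploit uniqueness of geodesics in a CAT(0) space. We may assume $D\neq\emptyset$, the empty case being trivial. Form the double $E=C*_DC$ of $C$ along $D$, i.e.\ the cubical complex obtained from two copies $C_1,C_2$ of $C$ by identifying their copies of $D$ (combinatorial convexity of $D$ guarantees that $D$ meets every cube of $C$ in a single face, so that $E$ really is a cubical complex). Let $\iota_1,\iota_2:C\rightarrow E$ be the two inclusions, let $s:E\rightarrow E$ be the cubical automorphism interchanging $C_1$ and $C_2$, and let $p:E\rightarrow C$ be the fold map restricting to the identity on each $C_j$. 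The fixed-point set of $s$ is exactly $D$. Since $p$ and each $\iota_j$ carry every cube of $E$ isometrically onto a cube, they are distance-non-increasing, and from $p\circ\iota_1=\mathrm{id}_C$ it follows that $\iota_1$, and likewise $\iota_2$, is an isometric embedding; in particular $d_E$ agrees with $d_C$ on the common copy of $D$ inside $E$.

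Next I would verify the hypotheses of Theorem~\ref{gromovthm} for $E$. It is simply connected: $D$ is connected, so van Kampen gives $\pi_1(E)=\pi_1(C)*_{\pi_1(D)}\pi_1(C)=1$. Every vertex of $E$ off the seam has the same link as in $C$, hence a flag link; a vertex $v$ on the seam has link the double $\Lk_C(v)*_{\Lk_D(v)}\Lk_C(v)$ of the flag complex $\Lk_C(v)$ along the full subcomplex $\Lk_D(v)$, fullness being precisely the combinatorial convexity of $D$. The double of a flag complex along a full subcomplex is again flag: a set of pairwise-adjacent vertices of such a double cannot contain a vertex strictly inside one copy together with one strictly inside the other (there is no edge between them), so it lies in a single copy of $\Lk_C(v)$, there spans a simplex by flagness, and that simplex persists in the double. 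Hence all vertex links of $E$ are flag, so by Theorem~\ref{gromovthm} the complex $E$ is CAT(0); in particular any two of its points are joined by a unique geodesic.

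Now fix $x,y\in D$. The inequality $d_C(x,y)\leq d_D(x,y)$ is automatic, so it remains to prove $d_D(x,y)\leq d_C(x,y)=d_E(x,y)$. Let $\gamma$ be the geodesic of $E$ from $x$ to $y$. Since $s$ is an isometry fixing $x$ and $y$, the path $s\gamma$ is again a geodesic from $x$ to $y$, hence $s\gamma=\gamma$ by uniqueness; an isometry of a segment fixing its endpoints is the identity, so $\gamma$ lies in $\mathrm{Fix}(s)=D$. It then remains to read off $\gamma$ as a string in $D$ of the right length: I would invoke that a geodesic segment of a cubical complex is a concatenation of finitely many straight pieces $\gamma_1,\dots,\gamma_k$, each contained in a single closed cube, the relevant input for the infinite-dimensional case being Proposition~\ref{epsilon} (so that $\epsilon_C>0$ everywhere and a geodesic stays, near any point, in a star of cubes; cf.\ \cite[I.7]{brihae}). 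For each $j$ the minimal cube of $E$ containing $\gamma_j$ is met by $\gamma_j$ in its interior, and that interior point lies in $D$, so this minimal cube is a cube of $D$. Thus the breakpoints of $\gamma$ form a string in $D$ of total length $\sum_j\mathrm{length}(\gamma_j)=\mathrm{length}(\gamma)=d_E(x,y)=d_C(x,y)$, whence $d_D(x,y)\leq d_C(x,y)$.

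The only step that is more than bookkeeping — and the one I expect to be the main obstacle — is the decomposition of a geodesic of $E$ into finitely many cube-segments without any bound on the dimension; as indicated this rests on the strict positivity of $\epsilon_C$ at every point (Proposition~\ref{epsilon}), exactly as in the proofs of Theorems~\ref{flag} and~\ref{gromovthm}. Everything else is the formal machinery of doubles, folds, van Kampen, and uniqueness of CAT(0) geodesics.
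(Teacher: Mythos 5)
Your proof is correct and is essentially the paper's own argument: form the double $C*_DC$, check it is simply connected (van Kampen) with flag vertex links, apply Theorem~\ref{gromovthm}, and use uniqueness of CAT(0) geodesics together with the swapping involution to force the geodesic between two points of $D$ to lie in $D$; your concluding cube-segment decomposition merely makes explicit a step the paper leaves implicit. The only difference is organisational: the paper proves the finite-dimensional case first (using only the classical finite-dimensional Gromov criterion) because its proof of the general Theorem~\ref{gromovthm} quotes exactly that case, whereas you invoke Theorem~\ref{gromovthm} outright --- harmless for your argument, since when $C$ is finite-dimensional so is your double, and only the finite-dimensional criterion is then needed.
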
 

\begin{proof} The relationship between the proofs of
  Theorem~\ref{gromovthm} and Theorem~\ref{isom} is similar to the 
relationship between the proofs of Theorem~\ref{flag} and
Theorem~\ref{localisom}.  More precisely, this 
proof will rely on Theorem~\ref{gromovthm}, and so for now it will 
only cover the case when $C$ is finite-dimensional.    
As before, consider the double of 
$C$ along $D$, $C*_DC$.  The argument given in the proof of
Theorem~\ref{localisom} shows that each of the two inclusions of $C$
into $C*_DC$ is an isometric embedding, and there is an isometric 
involution of $C*_DC$ swapping the two copies of $C$ whose fixed point
set is $D$.  By van Kampen's theorem, 
$C*_DC$ is simply-connected.  The link of each vertex of $C*_DC$ is 
flag.  From the cases of Theorem~\ref{gromovthm} that are already proved,
it follows that when $C$ is finite-dimensional, $C*_DC$ is CAT(0).  
Given any two points $x,y\in D$, there is a unique geodesic between 
them in $C*_DC$.  By uniqueness, this geodesic must lie inside $D$, 
and hence $d_D(x,y)=d_C(x,y)$.  
\end{proof} 

\begin{lemma} \label{simpconn} 
Any finite combinatorially convex subcomplex of a combinatorially
CAT(0) cubical complex is contractible.  
\end{lemma}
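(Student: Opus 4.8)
The plan is to reduce contractibility of $D$ to the statement that $D$ is simply connected, and then to prove simple-connectivity by producing a cellular retraction of the ambient complex $C$ onto $D$.

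First I would record the easy structural observations. Since $D$ is combinatorially convex, each vertex link $\Lk_D(v)$ is a full subcomplex of $\Lk_C(v)$, which is flag; and a full subcomplex of a flag complex is flag, so all vertex links of $D$ are flag. Being finite, $D$ is in particular finite-dimensional, so by the finite-dimensional case of Theorem~\ref{gromovthm} it is enough to show that $D$ is simply connected: once we know that, $D$ is CAT(0) and hence contractible. To prove $\pi_1(D)=1$ it suffices to construct a cellular retraction $r\colon C\to D$, for then the inclusion $D\hookrightarrow C$ induces a split monomorphism $\pi_1(D)\to\pi_1(C)$ and $\pi_1(C)=1$.

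The map $r$ will be the combinatorial nearest-point projection onto $D$, built from Sageev's theory of hyperplanes~\cite{sag}, that is, from the four facts quoted in the proof of Theorem~\ref{sagthm}. The first thing to check is that combinatorial convexity of $D$, together with those facts, forces $D$ to be exactly the union of all cubes of $C$ whose vertices all lie in $D^{(0)}$, and that $D^{(0)}$ is an intersection of half-spaces of $C$ (so no hyperplane separates vertices of $D$ from one another unless it actually crosses $D$). Granting this, for a vertex $w$ of $C$ I would define $r(w)$ to be the vertex characterised by the rule: a hyperplane $H$ of $C$ separates $w$ from $r(w)$ precisely when $H$ separates $w$ from every vertex of $D$. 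Existence and uniqueness of $r(w)$, and the fact that $r(w)\in D^{(0)}$, follow because a vertex is determined by the collection of half-spaces containing it and the prescribed collection of half-spaces is consistent. One then checks that $r$ does not increase distance in the $1$-skeleton, hence that it sends the vertex set of any cube of $C$ to the vertex set of a cube — necessarily a cube of $D$, by the description of $D$ above — so that $r$ extends cube-by-cube to a cellular map $C\to D$. Since no hyperplane separates a vertex of $D$ from $D$, $r$ fixes $D$ pointwise, so it is a retraction and the proof is complete.

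The main obstacle is precisely this construction: verifying that the prescribed family of half-spaces is realised by a (unique) vertex, and that $r$ carries cubes to cubes. This is the only step that genuinely uses Sageev's hyperplane machinery rather than results of the present paper; the reduction through finite-dimensional Gromov and the $\pi_1$-splitting are formal. As an alternative to building $r$ explicitly, one can argue with reduced square diagrams: a null-homotopic edge loop in $D$ bounds a reduced diagram $\psi\colon R\to C$, and every dual curve of $R$ runs between two boundary edges and so maps to a hyperplane that crosses $D$; chasing a path from any vertex of $R$ to the boundary then shows $\psi$ takes each vertex of $R$ into $D^{(0)}$, hence $\psi(R)\subseteq D$, so the loop is already null-homotopic in $D$. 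This variant rests on the same hyperplane facts.
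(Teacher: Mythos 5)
Your overall scheme (produce a retraction $C\to D$, deduce $\pi_1(D)=1$, then invoke the finite-dimensional case of Theorem~\ref{gromovthm} to get that $D$ is CAT(0), hence contractible) is a reasonable strategy, and the preliminary reductions are fine: links of $D$ are full subcomplexes of flag complexes and hence flag, and $D$ is finite, so finite-dimensional Gromov applies once simple connectivity is known. But the heart of your argument -- the combinatorial nearest-point projection -- is exactly the part you have not proved, and the justification you do offer is not correct as stated. First, the claim that combinatorial convexity (connectedness plus full links) forces $D$ to be the full subcomplex on its vertex set and forces $D^{(0)}$ to be an intersection of half-spaces is a genuine local-to-global theorem; it is not one of the four facts quoted from Sageev in the proof of Theorem~\ref{sagthm}, and its standard proofs use either disc-diagram machinery or the CAT(0) geometry that, in the infinite-dimensional ambient complex $C$, is precisely what is still being established at this point of the paper. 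Second, even granting that, the existence of the gate vertex $r(w)$ does not follow from ``a vertex is determined by its half-spaces and the prescribed family is consistent'': that fact gives uniqueness only. Existence requires an argument (e.g.\ taking a $1$-skeleton-closest vertex of $D$ to $w$ and showing, via the separation-counting fact and convexity of $D^{(0)}$, that it realizes the prescribed family), and the cellularity of $r$ (cubes to cubes) needs a further verification. You flag these as ``the main obstacle'' but leave them unproved; since they constitute essentially all of the content of the lemma on this route, the proposal has a genuine gap. Your alternative disc-diagram sketch has the same status: the properties of dual curves in reduced square diagrams, and the claim that a (minimal-area) diagram with boundary in $D$ has image in $D$, are again true but unproved here and rest on machinery the paper neither develops nor cites.

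For contrast, the paper avoids all of this: it attaches finitely many discs inside the simply connected $C$ to kill the finitely generated group $\pi_1(D)$, encloses the resulting finite subcomplex in a finite combinatorially convex subcomplex $E$ using Theorem~\ref{sagthm}, lifts $D$ to the universal cover $\widetilde E$ (which is finite-dimensional, hence CAT(0) by the already-proved cases of Theorem~\ref{gromovthm}), notes that $D\hookrightarrow\widetilde E$ is isometric by the finite-dimensional case of Theorem~\ref{isom}, and contracts $D$ inside $\widetilde E$ by geodesic contraction to a point of $D$. In particular the paper never constructs a retraction of $C$ onto $D$ and never proves directly that $D$ is simply connected; if you want to keep your approach, you would need to supply full proofs of the convexity and gate-map statements from the combinatorial hyperplane theory, which is a substantial addition.
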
 

\begin{proof} 
Let $D$ be a finite combinatorially convex subcomplex of $C$.  Since
$D$ is finite, the fundamental group of $D$ is finitely generated, 
and so by attaching finitely many discs to $D$ inside $C$ we may kill 
the fundamental group of $D$.  Hence there exists a finite subcomplex 
$D'$ of $C$ such that the inclusion $D\rightarrow D'$ induces the
trivial map on fundamental groups.  By Theorem~\ref{sagthm}, $D'$ is
contained in a finite combinatorially convex subcomplex $E$ of $C$.  
Let $\widetilde E$ be the universal covering of $E$.  Since the map 
$\pi_1(D)\rightarrow \pi_1(E)$ is trivial, there is a subcomplex of 
$\widetilde E$ which is isomorphic to $D$.  Since $\widetilde E$ is 
finite-dimensional, the cases of Theorem~\ref{gromovthm} that are already 
known imply that $\widetilde E$ is
CAT(0).  By the cases of Theorem~\ref{isom} that are already known, 
the map $i:D\rightarrow \widetilde E$ is an isometric embedding.  
Now for any point $x$ in a
CAT(0) space $X$, there is a contraction from $X$ to $\{x\}$ given by 
the homotopy $h_{X,x}:X\times I\rightarrow X$ that sends $(y,t)$ to the point 
at distance $t.d(x,y)$ along the geodesic from $x$ to $y$.  If we pick 
$x$ to be a point of $D$, then the homotopy $h_{\widetilde E,x}$ restricts to 
$D$ as a contraction of $D$ to $x$.  
\end{proof}

\begin{proof} (Theorem~\ref{gromovthm}, general case.)  
Suppose that $C$ is a simply-connected cubical complex in which all
links are flag, and let $x$ and $y$ be points of $C$.  By 
Theorem~\ref{sagthm}, there is a finite combinatorially convex 
subcomplex $D$ of $C$ containing both $x$ and $y$.  The cases of 
Theorem~\ref{gromovthm} that are already proved imply that $D$ is 
CAT(0), since $D$ is simply connected (by Theorem~\ref{isom}) and 
vertex links in $D$ are all flag.  Thus there is a unique geodesic 
in $D$ from $x$ to $y$.  To see that this path is also a geodesic 
in $C$, suppose not.  Then (as in the proof of Theorem~\ref{flag}), 
there is a string $\gamma$ in $C$ of length shorter than $d_D(x,y)$. 
Now let $D'$ be a finite combinatorially convex subcomplex of $C$ that
contains $D$ and the image of $\gamma$.  Then the inclusion of $D$
into $D'$ would not be an isometry, contradicting Theorem~\ref{isom}.  
This contradiction proves that the geodesic from $x$ to $y$ in $D$ 
is a geodesic in $C$, and hence $C$ is a geodesic metric space.  

Now suppose given a geodesic triangle in $C$ and two points $x$ and
$y$ on the perimeter of the triangle.  Let $D$ be a finite
combinatorially convex subcomplex of $C$ containing the triangle and 
the geodesic in $C$ from $x$ to $y$.  Since all links in $D$ are 
flag and by Lemma~\ref{simpconn}, $D$ is simply connected, it follows 
that $D$ is CAT(0).  Hence the distance in $D$, $d_D(x,y)$ is no
greater than the corresponding distance in a Euclidean comparison 
triangle.  But by choice of $D$, $d_D(x,y)=d_C(x,y)$ and so the 
CAT(0) inequality holds in~$C$.  
\end{proof} 

\begin{proof} (Theorem~\ref{isom}, general case.) The proof given
  above in the finite-dimensional case is valid in general now that
  the general case of Theorem~\ref{gromovthm} has been proved.  
\end{proof} 

\begin{theorem}\label{ebarg}
 Let $G$ be a finite group acting by isometries on a 
complete CAT(0) space $X$, or by automorphisms of an arbitrary CAT(0) 
cubical complex $C$.  Then the fixed point set for $G$ is contractible.  
\end{theorem}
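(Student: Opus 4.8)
The plan is to treat both cases by the same two-step scheme: first produce a single $G$-fixed point, and then show that the whole fixed point set is convex and contract it along geodesics. For the first case, $G$ finite makes every orbit $Gx$ bounded, so $X^G\neq\emptyset$ by the Bruhat--Tits fixed point theorem \cite[II.2.8]{brihae}, applied to the circumcentre of an orbit. For the second case I would first reduce to a complete complex. By Gromov's criterion (Theorem~\ref{gromovthm}) the cubical complex $C$ is simply connected with all links flag. Pick a vertex $x$; the orbit $Gx$ is a finite set of vertices, hence a finite subcomplex, so by Theorem~\ref{sagthm} it lies in a finite combinatorially convex subcomplex $D_0$, and I set $D=\bigcap_{g\in G}gD_0$. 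This $D$ is a subcomplex of the finite complex $D_0$ hence finite, is an intersection of combinatorially convex subcomplexes hence combinatorially convex, contains $Gx$, and is mapped to itself by every $g\in G$. Since a full subcomplex of a flag complex is flag, all links of $D$ are flag; by Lemma~\ref{simpconn} $D$ is contractible, in particular simply connected, so Theorem~\ref{gromovthm} shows $D$ is CAT(0), and being finite it is metrically complete (Theorem~\ref{thm:cubecomplete}). As $G$ acts on $D$ by cubical automorphisms, hence by isometries of its intrinsic metric, the first case applied to $D$ gives $D^G\neq\emptyset$, so $C^G\supseteq D^G\neq\emptyset$.

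For the contraction step, which is identical for $X$ and $C$, I would note that for each $g\in G$ the fixed set $\mathrm{Fix}(g)$ is convex: if $a$ and $b$ are fixed by the isometry $g$, then $g$ carries the unique geodesic $[a,b]$ to a geodesic from $a$ to $b$, hence to $[a,b]$ itself, so every point of $[a,b]$ is fixed. Thus $X^G=\bigcap_{g\in G}\mathrm{Fix}(g)$ (and likewise $C^G$) is a non-empty convex subset of a CAT(0) space. Choosing a point $p$ of it, the geodesic contraction $h(y,t)=$ ``the point at parameter $t$ along the geodesic from $p$ to $y$'' --- which is continuous on any CAT(0) space and which already appears in the proof of Lemma~\ref{simpconn} --- sends $X^G\times[0,1]$ (resp.\ $C^G\times[0,1]$) into $X^G$ (resp.\ $C^G$) by convexity, and contracts the fixed set to $p$.

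The step I expect to be the main obstacle is the reduction in the cubical case: Theorem~\ref{sagthm} does not produce a $G$-invariant subcomplex, which is why one must intersect the $G$-translates of $D_0$, and then check carefully that the intersection remains combinatorially convex, remains finite, still contains the orbit $Gx$, and carries a $G$-action by cubical automorphisms, so that the Bruhat--Tits theorem may be applied to $D$ itself even though the ambient complex $C$ is not assumed complete. Everything else --- convexity of the sets $\mathrm{Fix}(g)$ and continuity of the geodesic contraction --- uses only that CAT(0) spaces are uniquely geodesic and admit a continuous geodesic contraction to any point.
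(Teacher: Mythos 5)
Your proof is correct and follows essentially the same route as the paper: geodesic contraction of the convex fixed set, the Bruhat--Tits theorem for the complete case, and reduction of the cubical case to a finite $G$-invariant combinatorially convex subcomplex, which is CAT(0) by Lemma~\ref{simpconn} and Theorem~\ref{gromovthm} and complete because it is finite. The only cosmetic difference is that the subcomplex $D$ produced in the proof of Theorem~\ref{sagthm} (the intersection of \emph{all} half-spaces containing the orbit $G.v$) is already $G$-invariant by construction, so your step of intersecting the translates $gD_0$ is unnecessary, though harmless.
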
 

\begin{proof} 
If $x$ and $y$ are fixed points, then the unique geodesic from $x$ to
$y$ consists of fixed points.  Thus if $x$ is a fixed point, the 
geodesic contraction of $X$ or $C$ to the point $x$ restricts to 
a contraction of the fixed point set.  

It remains to prove that the fixed point set is 
non-empty.  In the case when $X$ is complete this is proved 
in~\cite[corollary~II.2.8]{brihae}.  A different proof is given
in~\cite[proposition~3]{bln}. 

If $C$ is a cubical CAT(0) complex, pick a vertex $v$, and construct a
finite combinatorially convex subcomplex $D$ containing the orbit
$G.v$ as in the proof of Theorem~\ref{sagthm}.  By the construction,
$G$ acts on $D$, and by Lemma~\ref{simpconn} and
Theorem~\ref{gromovthm}, $D$ is CAT(0).  Since $D$ is complete, there
is a fixed point in $D$.
\end{proof} 

\section{Cubical and cube complexes}
\label{sec:appthree}

We work throughout with cubical complexes, rather than the more
general complexes referred to as either \emph{cube complexes} or 
\emph{cubed complexes} (see~\cite[example~I.7.40 (4)]{brihae} for 
the definition of a cube complex).  The main purpose of this 
section is to show that every CAT(0) cube complex is in fact 
cubical, and that the second cubical subdivision of any locally 
CAT(0) cube complex is cubical.  

One extra complication that arises in the general case is that vertex
links in cube complexes are not in general simplicial complexes, but
are instead a more general type of complex made from simplices.  By
analogy with the case of cubes, we call these complexes \emph{simplex
complexes}.  A simplex complex is an $M_1$-polyhedral complex in the
sense of~\cite[definition~I.7.37]{brihae} in which each cell is an all
right simplex.  Simplex complexes are rather more general than
$\Delta$-complexes, which we use in Theorem~\ref{thm:deltaversion},
because they lack the ordering on their faces that is implied by a
$\Delta$-complex structure.

\begin{examples} 
There is just one $\Delta$-complex with one 0-cell, one 1-cell and one
2-cell, the so-called `dunces hat', which is contractible.  There is 
another simplex complex with these same numbers of cells, which can be 
obtained by identifying the three sides of a triangle as shown in
Figure~\ref{fig:four}.  

For an example of a locally CAT(0) cube complex that is not a cubical
complex, consider the 2-torus, made by identifying the opposite sides of a
square, as shown in Figure~\ref{fig:four}.  

For an example of a cube complex in which the vertex links are not 
simplicial complexes, consider the 2-sphere made by identifying the 
sides of a square as shown in Figure~\ref{fig:four}.  
\end{examples} 

\begin{figure}
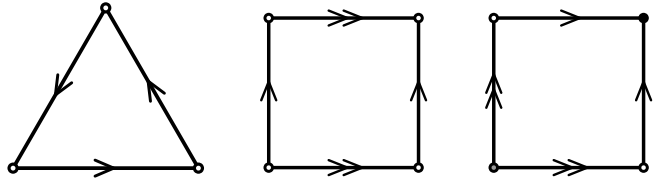

\begin{center}
\includegraphics[scale=1]{exxa.mps}
\qquad\includegraphics[scale=1.4]{exxb.mps}
\qquad\includegraphics[scale=1.4]{exxc.mps}
\end{center}
\caption{\label{fig:four}
From left to right: a simplex complex which is not a $\Delta$-complex; 
the torus as a locally CAT(0) cube complex; the sphere as a cube complex.
}
\end{figure}

\begin{lemma}\label{lem:locgeod}
Let $C$ be a cube complex, let $\gamma:[0,d]\rightarrow I^n$ be a 
straight line path joining two points of $I^n$, and let
$\sigma:I^n\rightarrow C$ be a cube of $C$.  If the endpoints of 
$\sigma\gamma$ are distinct points of $C$, then the image of 
$\sigma\gamma$ is a local geodesic in $C$.  
\end{lemma}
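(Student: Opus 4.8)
The plan is to verify the local geodesic condition pointwise: a path in a metric (cube) complex is a local geodesic if and only if, near each of its points, it is a geodesic. So I would fix a point $p = \sigma\gamma(t_0)$ in the image, let $\mu$ be the unique minimal closed cell of $C$ containing $p$ (recall Proposition~\ref{epsilon}, which gives $\epsilon_C(p) = \epsilon_\mu(p) > 0$), and work inside the open $\epsilon_C(p)$-ball $B$ about $p$. The key structural fact, analogous to Proposition~\ref{shortest} and its corollaries in the appendix, is that within such a small ball the metric of $C$ is controlled entirely by the cells containing $p$: any string of total length $< \epsilon_C(p)$ joining two points of $B$ can be pushed into the union of closed cells through $p$, so distances in $B$ agree with distances in the (finite, Euclidean) subcomplex $\mathrm{St}(\mu)$ consisting of those cells. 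Concretely, I would first reduce to showing: for $t_0$ in the interior of $[0,d]$ and $s$ small, the restriction $\sigma\gamma|_{[t_0-s,\,t_0+s]}$ is a geodesic in $C$, and the analogous one-sided statement at the endpoints $t=0$ and $t=d$.

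Next I would exploit the hypothesis that the two endpoints $\sigma\gamma(0)$ and $\sigma\gamma(d)$ are distinct points of $C$. Distinctness of the endpoints rules out the degenerate possibility that $\sigma$ wraps the segment around so that it returns to its start; more importantly, it is exactly what is needed so that the image is genuinely a path of positive length rather than a loop of length $0$, and it guarantees that for each interior parameter $t_0$ there is a genuinely two-sided neighbourhood on which $\sigma\gamma$ is injective up to the cube structure. In a small ball around $p=\sigma\gamma(t_0)$, the map $\sigma$ restricted to the preimage is an isometric embedding of a neighbourhood of $\gamma(t_0)$ in $I^n$ onto its image (since the face identifications of $\sigma$ only identify boundary faces, and a short enough segment through $\gamma(t_0)$ either stays in the interior of $I^n$ or meets the boundary in a controlled way). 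Hence $\sigma\gamma$ near $p$ is literally a straight segment sitting inside the Euclidean cell $\mu' = \sigma(I^n)$, or inside $\mathrm{St}(\mu)$; and a straight Euclidean segment in a convex Euclidean polytope is a geodesic of that polytope. Combining this with the previous paragraph — distances in $B$ equal distances in $\mathrm{St}(\mu)$, and the segment is a geodesic there — gives that $\sigma\gamma$ is a geodesic on $[t_0-s, t_0+s]$, which is the local geodesic property.

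The step I expect to be the main obstacle is the verification that a short straight segment of $\sigma\gamma$ really is a geodesic of $\mathrm{St}(\mu)$, i.e. that no shortcut through adjacent cells can be shorter. This is where one must be careful: the segment $\gamma$ in $I^n$ is straight, but after applying $\sigma$ its image may pass through a lower-dimensional face where several top cells of $\mathrm{St}(\mu)$ meet, and a priori a path could cut a corner through one of those other cells. The resolution is that within $B$ the only cells available are those containing $p$, and for a straight segment through $p$ in the Euclidean cell $\mu'$ the nearest-point projection argument of Proposition~\ref{shortest} shows the straight segment inside $\mu'$ is already shortest among all strings; I would phrase this as: any competing string of length $< \epsilon_C(p)$ between the two endpoints can be replaced, cell by cell via orthogonal projection onto faces through $p$, by one lying in $\mu'$, and in the convex polytope $\mu'$ the straight segment is uniquely shortest. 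Handling the two endpoints $t=0,d$ is then the same argument but with one-sided neighbourhoods, and the global conclusion follows since the local geodesic property has been checked at every parameter value.
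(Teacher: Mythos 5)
Your overall mechanism (push a competing string into the cells meeting the segment, project orthogonally, and use that a straight segment in a Euclidean cube is shortest) is the same as the paper's, but two steps carrying the real content of the lemma are missing or wrong. First, you assert that near each $p=\sigma\gamma(t_0)$ the map $\sigma$ restricts to an isometric embedding of a neighbourhood of $\gamma(t_0)$ in $I^n$ onto its image. For a general cube complex this is false when $\gamma(t_0)\in\partial I^n$: the characteristic map may identify two faces of $I^n$ that share points (in the paper's own example of the $2$-sphere built from a single square there is a corner on every neighbourhood of which $\sigma$ fails to be injective), and nothing in the hypotheses prevents $\gamma$ from lying inside such a face. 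The paper avoids this by first replacing $\sigma$ by the smallest face of $I^n$ containing $\gamma$, so that the interior of the segment lies in the interior of the cube; you have no analogue of this reduction, and your parenthetical ``meets the boundary in a controlled way'' is exactly the unproved point.

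Second, and more seriously, the distinct-endpoints hypothesis is not doing in your argument what it must do. Its role is not to give the path positive length or two-sided injectivity; in the paper it guarantees (writing $x'=\gamma(0)$, $y'=\gamma(d)$, $x=\sigma(x')$, $y=\sigma(y')$) that the closed set $Y=Y_0\cup\bigl(\sigma^{-1}(x)\setminus\{x'\}\bigr)\cup\bigl(\sigma^{-1}(y)\setminus\{y'\}\bigr)$, with $Y_0$ the union of the faces of $I^n$ missing $\gamma$, is disjoint from the image of $\gamma$, so that a single $\epsilon>0$ can be chosen uniformly along the whole segment with the property that any competing string of length at most $\epsilon$ based on the arc meets only $\sigma$ and cubes having $\sigma$ as a face; only for such cubes is there a distance-nonincreasing orthogonal projection into $\sigma$. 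In your pointwise set-up, when $p$ is at or near an endpoint image, the star $\nstar(\mu)$ in general contains cells that merely share the small face $\mu$ with $\sigma$ --- other cubes attached along faces of $I^n$ that $\gamma$ misses or touches only at that endpoint, and the ``other sheets'' of $\sigma$ itself at further preimages of the endpoint --- and for these there is no projection ``onto faces through $p$'' landing in $\mu'$, so your resolution of your own main obstacle does not go through. This is precisely where the hypothesis $x\neq y$ enters via the paper's choice of $Y$ and $\epsilon$, and it cannot be dispensed with: when the endpoints coincide the conclusion genuinely fails, as the M\"obius band remark immediately following the lemma shows.
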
 

\begin{proof} 
After possibly replacing $\sigma$ by one of its proper faces, we may
assume that the interior of the line segment $\sigma\gamma$ is
contained in the interior of the cube $\sigma$.  Define $Y_0$, a 
subcomplex of $I^n$, to be the union of all faces of $I^n$ that 
do not meet the image of $\gamma$.  Let $x'=\gamma(0)$ and
$y'=\gamma(d)$ be the endpoints of $\gamma$, and let $x=\sigma(x')$, 
$y=\sigma(y')$ be the endpoints of $\sigma\gamma$.  Define $Y$, a 
closed subset of $I^n$ to be  
\[Y = Y_0 \cup \left(\sigma^{-1}(x) \setminus\{x'\}\right)
\cup \left(\sigma^{-1}(y) \setminus\{y'\}\right).\]  
Choose $\epsilon>0$ so that the distance in $I^n$ between each point
of $Y$ and each point of the image of $\gamma$ is strictly greater 
than $2\epsilon$.  We shall show that each subpath of $\sigma\gamma$ of 
length at most $\epsilon$ is a geodesic in $C$.  

Take $w$, $z$ points of $\sigma\gamma$ separated by an arc of
$\sigma\gamma$ of length $\delta\leq \epsilon$, and suppose that 
$w=w_0,w_1,\ldots,w_m=z$ is a chain from $w$ to $z$.  By the choice of
$\epsilon$, each $w_i$ must either lie in $\sigma$ or in a
higher-dimensional cube of $C$ having $\sigma$ as a face.  If any 
of the $w_i$ lie outside $\sigma$, we may obtain a chain of shorter 
length by replacing each $w_i$ by $z_i$, defined to be the orthogonal
projection of $w_i$ into $\sigma$.  Any chain in $\sigma$ joining 
$w$ and $z$ has length greater than or equal to the arc of
$\sigma\gamma$ from $w$ to $z$, and the claim follows.  
\end{proof} 

\begin{remark} 
Consider the case when $C$ is a M\"obius strip made by identifying 
two opposite sides of a square, $\sigma:I^2\rightarrow C$ is the 
natural identification map, and $\gamma$ is the straight line 
joining two antipodal vertices of the square.  The two endpoints 
of $\gamma$ are mapped to the same point of $C$, and the image of 
$\sigma\gamma$ is not a local geodesic in $C$.  
\end{remark} 

\begin{theorem}\label{thm:cubevcubical}
Let $C$ be any CAT(0) cube complex.  Then $C$ is cubical.  
\end{theorem}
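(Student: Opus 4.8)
The plan is to show that in a CAT(0) cube complex $C$, no two distinct faces of a single cube can be identified, and no two distinct cubes can share more than a single common face — the two conditions that distinguish a cubical complex from a general cube complex. The key tool is Lemma~\ref{lem:locgeod}: a straight line segment inside a cube $\sigma:I^n\to C$ whose endpoints are distinct in $C$ maps to a local geodesic in $C$. Since $C$ is CAT(0), local geodesics are global geodesics and in particular are embedded arcs; moreover any two points of $C$ are joined by a \emph{unique} geodesic.

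First I would rule out self-identifications within a single cube $\sigma:I^n\to C$. Suppose two distinct points $p,q\in I^n$ have $\sigma(p)=\sigma(q)$. Pick a point $r$ in the interior of $I^n$ lying on neither the segment $[p,r']$ nor... more carefully: choose $r$ in the interior of $I^n$ so that $r$, $p$, $q$ are in ``general position'', and so that $\sigma(r)\neq\sigma(p)$; this is possible because the set of points mapped by $\sigma$ to $\sigma(p)$ is a proper closed subset (it cannot be all of $I^n$ since $\sigma$ restricted to the interior is injective — here one uses that a cube of a cube complex is by definition injective on the open cube). Then $\sigma([p,r])$ and $\sigma([q,r])$ are both local geodesics from $\sigma(p)=\sigma(q)$ to $\sigma(r)$ by Lemma~\ref{lem:locgeod}, hence both are the unique geodesic between these two points, hence they coincide as subsets of $C$. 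But $\sigma$ is injective on the interior of $I^n$, and the interiors of the segments $[p,r]$ and $[q,r]$ meet the interior of $I^n$; tracing back through $\sigma$ forces $[p,r]$ and $[q,r]$ to agree near $r$, whence $p=q$, a contradiction. A slightly cleaner route: take $p,q$ distinct with $\sigma(p)=\sigma(q)$ and minimise the dimension of the smallest face containing $p$ (or $q$); then push both slightly into the interior along parallel directions to reduce to the case where $p,q$ lie in the open cube, directly contradicting injectivity there. I would use whichever version is shortest to write.

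Next I would show that two distinct cubes $\sigma:I^m\to C$ and $\tau:I^n\to C$ intersect in at most one common face. Suppose $F_1$ and $F_2$ are two distinct faces of $\sigma$ that are carried by $\sigma$ to the same subcomplex as faces of $\tau$ — equivalently, suppose the set $\sigma(I^m)\cap\tau(I^n)$, pulled back to $I^m$, is not a single face. Pick points $x$ in the interior of one ``component'' of the overlap and $y$ in the interior of another, together with a point $z$ in the interior of $\sigma$ not in the overlap. Lemma~\ref{lem:locgeod} applied inside $\sigma$ gives local geodesics, and CAT(0) uniqueness of geodesics forces the two ways of travelling (through $\sigma$, through $\tau$) to coincide; combined with injectivity on open cubes this collapses the two components into one, a contradiction. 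The bookkeeping here — making precise what ``component of the overlap'' means and extracting a genuine pair of competing geodesics — is the main obstacle, and I would organise it by first establishing the self-identification statement above, since the two-cube case can be reduced to it: if $\sigma$ and $\tau$ shared two faces, one could attempt to build a single cube or a short geodesic bigon witnessing the failure of uniqueness.

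Finally I would assemble these two facts into the definition of a cubical complex as given in \cite[I.7.40]{brihae}: the attaching map of each cube is injective, and the intersection of (the images of) any two cubes is a single common face (possibly empty). The first point follows from the self-identification argument, the second from the two-cube argument, and $C$ is therefore cubical. I expect the write-up to be short once the geodesic-uniqueness leverage is set up; the delicate point throughout is keeping track of which segments have \emph{distinct} endpoints in $C$, since that is precisely the hypothesis Lemma~\ref{lem:locgeod} requires.
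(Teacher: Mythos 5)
Your first half (each cube of $C$ embeds) is correct and is essentially the paper's own argument: apply Lemma~\ref{lem:locgeod} to the two straight segments joining the putatively identified points to a common interior point, upgrade the local geodesics to geodesics using \cite[II.1.4~(2)]{brihae}, invoke uniqueness of geodesics in a CAT(0) space, and finish with injectivity on the open cube. (The paper takes the interior point to be the barycentre; your generic $r$ does the same job. Your ``cleaner route'', pushing the two identified boundary points into the interior along parallel directions, does not work as stated -- nearby interior points are \emph{not} identified, precisely because the characteristic map is injective on the open cube -- but you only offer it as an alternative, and your main route is fine.)

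The genuine gap is in the second half, the statement that two distinct cubes meet in at most a single common face. What you write there is not an argument: choosing $x$ and $y$ in the interiors of two ``components'' of the overlap together with a third point $z$ in the interior of $\sigma$ outside the overlap does not by itself produce two competing geodesics with the same endpoints, and the proposed reduction (``build a single cube or a short geodesic bigon witnessing the failure of uniqueness'') is left entirely unspecified; moreover the case analysis you set up presupposes knowing that the overlap is a union of faces with several pieces, which is exactly what has to be controlled. The idea you are missing -- and the way the paper disposes of this step in a few lines -- is to prove that the intersection is \emph{geodesically convex}, with no discussion of components at all. Given distinct points $a,b\in\sigma(I^m)\cap\sigma'(I^n)$, use the injectivity already established to pull them back to unique points of $I^m$ and of $I^n$, and apply Lemma~\ref{lem:locgeod} to the straight segment joining the preimages in \emph{each} cube: both images are local geodesics, hence geodesics in the CAT(0) space $C$, with the same endpoints, hence they coincide with the unique geodesic $[a,b]$. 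Thus $[a,b]$ lies in both images, so $\sigma(I^m)\cap\sigma'(I^n)$ is convex; since the intersection of two closed cubes in a cube complex is a union of common faces, convexity forces it to be a single face of each. Without this convexity observation your plan stalls at exactly the point you flag as ``the main obstacle''.
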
 

\begin{proof} 
First we show that each cube of $C$ embeds in $C$ and then that the
intersection of any two cubes of $C$ is a cube of $C$.  Let
$\sigma:I^n\rightarrow C$ be a cube of $C$, and suppose that 
$x',y'$ are points in the boundary of $I^n$ such that
$\sigma(x')=\sigma(y')$.  Let $z'$ be the barycentre of $I^n$, and 
let $\gamma$, $\gamma'$ be the straight line paths in $I^n$ from 
$z'$ to $x'$ and $y'$.  By Lemma~\ref{lem:locgeod}, there exists 
$\epsilon >0$ so that any subpath of either $\sigma\gamma$ or
$\sigma\gamma'$ of length at most $\epsilon$ is a geodesic.  Since 
$C$ is CAT(0), any local geodesic in $C$ is a
geodesic~\cite[II.1.4~(2)]{brihae}, and so $\sigma\gamma$ and 
$\sigma\gamma'$ are geodesics.  Since $\sigma(x')=\sigma(y')$, these 
two geodesics have the same endpoints.  Since geodesics in a CAT(0) 
space are unique, it follows that $x'=y'$ and so $\sigma$ is
injective.  

Next suppose that $\sigma$, $\sigma'$ are distinct cubes of $C$, 
and that $a,b\in C$ are points in the intersection of the images 
of $\sigma$ and $\sigma'$.  Since $\sigma:I^m\rightarrow C$ and 
$\sigma':I^n\rightarrow C$ are injective, there are unique points
$x,y\in \partial I^m$ and $x',y'\in \partial I^n$ such that 
$\sigma(x)=a=\sigma'(x')$ and $\sigma(y)=b=\sigma'(y')$.  Let 
$\gamma$ be the straight line in $I^m$ from $x$ to $y$, and let 
$\gamma'$ be the straight line in $I^n$ from $x'$ to $y'$.  By 
Lemma~\ref{lem:locgeod}, each of $\sigma\gamma$ and $\sigma'\gamma'$
is a local geodesic in $C$, and so by~\cite[II.1.4~(2)]{brihae} 
each is a geodesic in $C$.  These 
geodesics have the same endpoints, and so they must coincide.  
It follows that the intersection of the images of $\sigma$ and 
$\sigma'$ is a convex subset of $C$, and so must be a single 
face of $\sigma$ and of~$\sigma'$.  
\end{proof} 

\begin{lemma} \label{lem:cube}
  Let $C$ be a cube viewed as a cubical complex, and let $\mu$ be a
  cube of the cubical subdivision $C'$ of $C$.  The intersection of
  those faces of $C$ that meet $\mu$ is non-empty.
\end{lemma}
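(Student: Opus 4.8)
The plan is to argue entirely by coordinates. Identify $C$ with the unit cube $I^n$, so that the faces of $C$ are precisely the products $F=F_1\times\cdots\times F_n$ with each $F_i\in\{\{0\},\{1\},[0,1]\}$, and the cells of the cubical subdivision $C'$ (bisection of each coordinate, as used elsewhere in the paper) are the products $\mu=\mu_1\times\cdots\times\mu_n$ with each $\mu_i$ one of the five sets $\{0\}$, $\{1/2\}$, $\{1\}$, $[0,1/2]$, $[1/2,1]$. Fix such a cell $\mu$. Rather than computing the intersection of all faces of $C$ meeting $\mu$ directly, I would exhibit a single \emph{nonempty} face $F^\ast$ of $C$ contained in every face that meets $\mu$; since $C$ itself is a face meeting $\mu$, nonemptiness of the intersection follows at once.

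Define $F^\ast=G_1\times\cdots\times G_n$ by setting $G_i=[0,1]$ when $\mu_i=\{1/2\}$, $G_i=\{0\}$ when $0\in\mu_i$, and $G_i=\{1\}$ when $1\in\mu_i$. No $\mu_i$ among the five possibilities contains both endpoints of $[0,1]$, and $\{1/2\}$ contains neither, so these three cases are mutually exclusive and exhaustive; hence $F^\ast$ is a well-defined, nonempty face of $C$. The key step is then to check that if $F=F_1\times\cdots\times F_n$ is a face of $C$ with $F\cap\mu\neq\emptyset$, then $F^\ast\subseteq F$. Since $F\cap\mu=\prod_i(F_i\cap\mu_i)$, the hypothesis gives $F_i\cap\mu_i\neq\emptyset$ for every $i$, and a short case analysis forces $G_i\subseteq F_i$: if $\mu_i=\{1/2\}$ then $1/2\in F_i$ forces $F_i=[0,1]=G_i$; if $0\in\mu_i$ then $F_i$ cannot be $\{1\}$ (which meets neither $\{0\}$ nor $[0,1/2]$), so $0\in F_i$ and $\{0\}=G_i\subseteq F_i$; the case $1\in\mu_i$ is symmetric. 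Therefore $F^\ast=\prod_iG_i\subseteq\prod_iF_i=F$ for every face $F$ of $C$ meeting $\mu$, so the intersection of all such faces contains $F^\ast$ and is nonempty (in fact it equals $F^\ast$, though only nonemptiness is required).

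I do not expect a genuine obstacle here: the statement is elementary once phrased via the product structure of $I^n$. The only points needing care are pinning down the correct meaning of "cubical subdivision" (bisection of coordinates, doubling side lengths) and the small observation that each of the five possibilities for a coordinate $\mu_i$ contains at most one endpoint of $[0,1]$, which is exactly what makes $G_i$ — and hence the candidate face $F^\ast$ — well defined.
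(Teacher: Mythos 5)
Your proof is correct, and it is essentially the paper's argument carried out in explicit coordinates: your $F^\ast$ is exactly the face $\tau_2$ whose barycentre is the minimal vertex of $\mu$ in the paper's description of cubes of $C'$ as ordered pairs $(\tau_1,\tau_2)$ of faces of $C$. Your coordinate-wise case analysis is just a hands-on verification of the paper's key observation that $\mu=(\tau_1,\tau_2)$ meets a face $\tau$ if and only if $\tau\geq\tau_2$, so the intersection of all faces meeting $\mu$ is the (nonempty) face $\tau_2=F^\ast$.
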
 

\begin{proof} 
There is a natural bijective correspondence between vertices of
$C'$ and faces of $C$.  Containment of faces of $C$
induces a partial ordering on vertices of $C'$ such that 
the vertex set of any cube $\mu$ of $C'$ has unique maximal and minimal 
elements.  This establishes a bijection between cubes of 
$C'$ and ordered pairs $(\tau_1,\tau_2)$ of faces of $C$ 
with $\tau_1\geq \tau_2$: the pair $(\tau_1,\tau_2)$ corresponds to 
the cube of dimension $\dim(\tau_1)-\dim(\tau_2)$ whose maximal and
minimal vertices are the barycentres of $\tau_1$ and $\tau_2$.  
With this notation, $\mu=(\tau_1,\tau_2)$ meets $\tau$ if and only 
if $\tau\geq \tau_2$, and in this case the intersection $\mu\cap
\tau$ is equal to the cube $(\tau_1\cap\tau,\tau_2)$ of $C'$.  
Hence every face of $C$ that meets $\mu=(\tau_1,\tau_2)$ contains
$\tau_2$, and the intersection of all such faces is equal to
$\tau_2$.  
\end{proof}

\begin{proposition} \label{prop:subdiv}
Let $C$ be a cube complex with the property that 
every vertex link in $C$ is a simplicial complex, and let $C'$, $C''$
 denote the first and second cubical subdivisions of $C$.  Then 
\begin{enumerate} 

\item Cubes of $C'$ embed

\item The intersection of any two cubes of $C'$ is a disjoint union of
  cubes of~$C'$

\item $C''$ is a cubical complex 

\end{enumerate} 
\end{proposition}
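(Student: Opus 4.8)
The plan is to work throughout with the combinatorial description of the cubical subdivision supplied by Lemma~\ref{lem:cube}: inside each cube $\sigma\colon I^n\to C$ of $C$, the cubes of $C'$ correspond bijectively to nested pairs $(\tau_1,\tau_2)$ of faces of $I^n$ with $\tau_2\le\tau_1$, the cube $(\tau_1,\tau_2)$ meets exactly the faces $\tau$ of $I^n$ with $\tau\ge\tau_2$, and then $(\tau_1,\tau_2)\cap\tau=(\tau_1\cap\tau,\tau_2)$. I shall also use repeatedly that for a genuine cube $I^n$ the subdivision $(I^n)'$ is itself a genuine cubical complex, so that inside any \emph{embedded} cube of $C'$ or of $C''$ the intersection of two sub-cubes is automatically a single common face, described by the last formula.

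For part~(1), let $\mu=(\tau_1,\tau_2)$ be a cube of $C'$ contained in a cube $\sigma\colon I^n\to C$, and suppose $\sigma(x)=\sigma(y)$ with $x,y\in\mu$. Since the restriction of $\sigma$ to each face of $I^n$ is again a cube of $C$, hence injective on that face's interior, the points $x$ and $y$ lie in relatively open faces $\tau^x,\tau^y$ of $I^n$ whose images meet; comparing the dimensions of open cells of $C$ forces $\dim\tau^x=\dim\tau^y$, and both faces lie above $\tau_2$. If $\tau^x=\tau^y$ then $x=y$. If $\tau^x\ne\tau^y$, pick a vertex $v'$ of $\tau_2$: then $\sigma$ identifies interior points of the two distinct faces $\tau^x$ and $\tau^y$ of $\sigma$ at the corner $v'$, which forces the simplex of $\Lk_C(\sigma(v'))$ contributed by $\sigma$ at $v'$ to be non-embedded, contradicting the hypothesis that $\Lk_C(\sigma(v'))$ is a simplicial complex. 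Hence $\sigma|_\mu$ is injective, so cubes of $C'$ embed. Part~(2) is then a similar but lengthier face-by-face analysis: given cubes $\mu,\mu'$ of $C'$ one uses Lemma~\ref{lem:cube} and part~(1) to show that every point of $\mu\cap\mu'$ lies in a common face of $\mu$ and $\mu'$, that two such faces are either nested or disjoint, and therefore that $\mu\cap\mu'$ is a disjoint union of cubes of $C'$.

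Part~(3) is then a formal consequence. Any cube $\nu$ of $C''=(C')'$ lies in a cube $\mu$ of $C'$, which embeds by part~(1); since $\mu\cong I^k$ and $(I^k)'$ is cubical, $\nu$ embeds. Given cubes $\nu_1\subseteq\mu_1$ and $\nu_2\subseteq\mu_2$ of $C''$ with $\mu_i$ cubes of $C'$, part~(2) writes $\mu_1\cap\mu_2$ as a disjoint union of cubes $\rho_1,\dots,\rho_m$ of $C'$; since $\mu_1$ and $\mu_2$ embed, each $\rho_j$ is a common face of $\mu_1$ and of $\mu_2$, and $(\rho_j)'$ is cubical. Writing $\nu_1=(\tau_1,\tau_2)$ as a cube of $(\mu_1)'\cong(I^k)'$ and applying Lemma~\ref{lem:cube} inside $\mu_1$, the cube $\nu_1$ meets $\rho_j$ only if $\rho_j\ge\tau_2$; since the $\rho_j$ are pairwise disjoint and faces of a cube containing a common face meet, $\nu_1$ meets at most one of them, say $\rho_{j_0}$. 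Then $\nu_1\cap\nu_2=\nu_1\cap\nu_2\cap\rho_{j_0}$, and inside the genuine cube $\rho_{j_0}$ the sets $\nu_1\cap\rho_{j_0}$ and $\nu_2\cap\rho_{j_0}$ are single faces of $\nu_1$ and of $\nu_2$ whose intersection is a single common face. With the embedding statement this proves $C''$ is cubical.

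The step I expect to be the main obstacle is part~(1), and within it the translation of ``$\Lk_C(v)$ is a simplicial complex'' into ``$\sigma$ cannot identify interior points of two distinct faces of $\sigma$ sharing a corner over $v$''. Making this precise means unwinding the definition of the vertex link of a cube complex as a metric simplex complex and checking that such an identification is exactly what a non-embedded simplex of the link records. After that, part~(2) is routine if tedious, and part~(3) proceeds as sketched.
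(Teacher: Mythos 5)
The gap is exactly at the step you flag in part~(1), and it is not a routine verification but a false inference. If $\sigma(x)=\sigma(y)$ with $x,y$ in the interiors of distinct faces $\tau^x\neq\tau^y$ of $I^n$, the gluing axiom for polyhedral complexes gives an isometry $h\colon\tau^x\to\tau^y$ with $\sigma\circ h=\sigma|_{\tau^x}$; but $h$ need not fix your chosen vertex $v'$ of $\tau_2$ --- it only sends $v'$ to some vertex $h(v')$ with $\sigma(h(v'))=\sigma(v')$. When $h(v')\neq v'$, the identification matches a face of the corner simplex of $\Lk_C(\sigma(v'))$ at $v'$ with a face of the \emph{different} corner simplex at $h(v')$, and two distinct simplices of a simplicial complex may perfectly well share a common face: no non-embedded simplex of the link is produced. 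So the claim that such an identification ``is exactly what a non-embedded simplex of the link records'' is wrong, and your contradiction does not follow. The case is not hypothetical: take a single square $[0,1]^2$ and glue its bottom side to its right side by $(t,0)\sim(1,t)$. The two glued sides share the corner $v'=(1,0)$, but the gluing isometry carries $v'$ to $(1,1)$; every vertex link of the resulting square complex is simplicial (a path of three arcs at the image of $(0,0),(1,0),(1,1)$, and a single arc at the fourth vertex), yet the quarter-square of $C'$ at $v'$ has its two side-midpoint vertices identified. Thus simpliciality of vertex links, invoked only through embeddedness of a single corner simplex, cannot exclude the configuration you need to rule out; the case $h(v')\neq v'$ is the whole crux, and your argument never engages with it. For comparison, the paper's proof of part~(1) proceeds differently: it reduces to cubes of $C'$ having an original vertex $v$ of $C$ among their vertices and uses that the corner simplex of $\Lk_C(v)$ is embedded, so that the barycentres of its faces are distinct points of the link --- but there too the delicate point is precisely a gluing isometry that permutes the corners of $\sigma$ lying over $v$, which your one-line reduction simply bypasses.

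A second, smaller problem is part~(2), which you declare ``routine if tedious.'' In the paper this step is not a face-by-face consequence of part~(1): for a vertex $v$ of $\sigma\cap\sigma'$ one forms the set $S$ of vertices $w$ with $(v,w)$ an edge of the intersection, takes minimal cubes $\mu\leq\sigma$ and $\mu'\leq\sigma'$ whose vertex sets contain $S\cup\{v\}$, and identifies $\mu=\mu'$ because they define simplices of $\Lk_{C'}(v)$ with the same vertex set and that link is simplicial; note this uses simpliciality of links of $C'$ at vertices of the intersection, which may be barycentres rather than vertices of $C$. Your sketch offers no substitute for this identification, so part~(2), and with it the hypotheses feeding part~(3), is not established as written. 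Part~(3) itself, granted (1) and (2), does follow your outline, which coincides with the paper's use of Lemma~\ref{lem:cube}.
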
 

\begin{remark} 
The link of a vertex of a cube complex is unchanged by passing to the 
cubical subdivision.  So if $C$ has a vertex link that is not
simplicial, then so does every cubical subdivision of $C$.  
\end{remark}  

\begin{proof} 
Any cube $\mu$ of $C'$ is a face of another cube $\nu$ of $C'$ such
that $\nu$ contains some vertex $v$ of $C$ amongst its vertices.  Thus 
it suffices to show that such a $\nu$ embeds.  It suffices to show
that the $2^n$ vertices of $\nu$ are distinct.  The line segments in 
$\nu$ from $v$ to the other $2^n-1$ vertices of $\nu$ define $2^n-1$ 
points of $\Lk_{C'}(v)=\Lk_C(v)$, and it remains to show that these 
points are distinct.  If $\tau$ is the $(n-1)$-simplex of
$\Lk(v)$ defined by $\nu$, then these points are the barycentres of 
$\tau$ and its faces.  Since $\Lk(v)$ is simplicial, these points are 
distinct.  Hence the vertices of $\nu$ lie in the interiors of
distinct cubes of $C$, and so they are distinct.  

Let $\sigma$ and $\sigma'$ be cubes of $C'$, and let $v$ be a vertex of 
the intersection $\sigma\cap\sigma'$.  Define a set $S$ of vertices of 
$C'$ by 
\[S=\{w\in (C')^0 : \hbox{$(v,w)$ is an edge of
  $\sigma\cap\sigma'$}\}.  
\] 
Since $v$ was an arbitrary vertex of $\sigma\cap\sigma'$, it suffices
to show that there is a cube contained in $\sigma\cap\sigma'$ whose 
vertex set contains $S\cup\{v\}$.  By hypothesis, there is a cube 
$\mu\leq \sigma$ such that $S\cup\{v\}$ is contained in the vertex set 
of $\mu$, and we may take $\mu$ of minimal dimension amongst all such 
cubes, in which case $\mu$ is an $|S|$-cube.  
Similarly, there is an $|S|$-cube $\mu'\leq \sigma'$ with 
$S\cup \{v\}$ contained in its vertex set.  The images of $\mu$ and
$\mu'$ in $\Lk_{C'}(v)$ are simplices with the same vertex set.  Since 
$\Lk_{C'}(v)$ is simplicial, it follows that $\mu=\mu'$, and so $\mu$ 
is contained in $\sigma\cap\sigma'$ as required.  

It remains to show that $C''$ is cubical.  Let $\mu$ be a cube of 
$C''$, and let $\nu$ be a cube of $C'$ that contains $\mu$.  Since 
$\nu$ embeds, it follows that $\mu$ embeds.  Now let $\mu'$ be another
cube of $C''$, and let $\nu'$ be a cube of $C'$ that contains $\mu'$.  
The intersection $\mu\cap\mu'$ is contained in $\nu\cap\nu'$.  By 
Part~2, the intersection $\nu\cap\nu'$ is equal to a disjoint union 
$\sigma_1\coprod \cdots\coprod \sigma_r$ of cubes of $C'$.  By
Lemma~\ref{lem:cube}, the set of faces of $\nu$ that meet $\mu$ is a 
connected subcomplex of $\nu$.  It follows that the $\mu$ intersects 
at most one of the cubes $\sigma_i$ non-trivially.  Similarly, $\mu'$ 
intersects $\sigma_j$ for at most one value of $j$.  Hence either 
$\mu\cap\mu'$ is empty, or there exists $i$ so that $\mu\cap\mu' 
\subseteq \sigma_i$.  In this latter case, since $\sigma_i$ embeds 
in $C'$, it follows that $\mu\cap\mu'$ is a face of both $\mu$ and
$\mu'$.  
\end{proof}

For a vertex $v$ in a cube complex $C$, define $\nstar_C(v)$ to the the
union of all of the closed cubes of $C$ that contain $v$.  For $m$ a 
positive integer let $C^{(m)}$ denote the $m$th iterated cubical
subdivision of $C$.  

\begin{proposition}\label{starprop}  
For any cube complex $C$ and vertex $v$, $\nstar_{C''}(v)$ is contractible.
For any $m\geq 2$ there is an isomorphism
$\phi_m:\nstar_{C''}(v)\rightarrow \nstar_{C^{(m)}}(v)$ which multiplies
  distances by $2^{2-m}$.  
\end{proposition}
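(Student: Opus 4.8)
The plan is to identify the closed star $\nstar_{C^{(m)}}(v)$, for every $m\ge 2$, with one fixed metric cube complex $K$ depending only on $L:=\Lk_C(v)$, rescaled so that its top cubes have side $2^{-m}$; both the contractibility and the isomorphisms $\phi_m$ then drop out.

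First I would localize the problem. Any cube of $C^{(m)}$ lies in a unique cube $\bar\sigma$ of $C$ (the one it subdivides), and if it contains $v$ then, since the characteristic map $f_\sigma\colon[0,1]^n\to C$ is injective on the interior of each face and maps an open face of positive dimension onto an open cell of positive dimension, the preimage $f_\sigma^{-1}(v)$ of the $0$-cell $v$ consists only of corners of $[0,1]^n$. Hence $v$ is a vertex of $\sigma$, and the cubes of $C^{(m)}$ inside $\bar\sigma$ that contain $v$ are exactly the faces of the corner $2^{-m}$-subcube(s) of $[0,1]^n$ at the corner(s) mapping to $v$. The hypothesis $m\ge 2$ enters here: two distinct corners of $[0,1]^n$ differ in some coordinate, so the corresponding corner $2^{-m}$-subcubes lie in the disjoint slabs $[0,2^{-m}]$ and $[1-2^{-m},1]$ of that coordinate and are disjoint (this would fail for $m=1$). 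Consequently $\nstar_{C^{(m)}}(v)$ is obtained from the disjoint union, over pairs $(\sigma,p)$ with $p$ a corner of $\sigma$ and $f_\sigma(p)=v$, of the corner $2^{-m}$-cubes $K_{\sigma,p}$, by identifications; and since any face of $[0,1]^n$ meeting $K_{\sigma,p}$ contains $p$, every such identification is the gluing of a face of some $K_{\sigma,p}$ \emph{through} $v$ to a face of another $K_{\sigma',p'}$ through $v$, induced by a face identification of $C$. The combinatorics that result are precisely those recording how the open faces of $C$ through $v$ fit together, i.e.\ the simplex complex $L=\Lk_C(v)$.

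Rescaling $\nstar_{C^{(m)}}(v)$ by $2^{m}$ turns each $K_{\sigma,p}$ into a copy of the cube $[0,1]^{n}$ with a distinguished corner $v$, glued along its faces through $v$ according to $L$; call the resulting cube complex $K$, which is manifestly independent of $m$. (One may also recognise $K$ as the closed star of the cone vertex $v_0$ in the cube complex $D$ attached to the simplex complex $L$ exactly as in the proof of Theorem~\ref{thm:cubecomplete}, where $D$ is realised as a star-shaped subset of a Euclidean space $\bigoplus_V\rr$.) Now $K$ is a union of cubes all having $v$ as a vertex, and all gluings are along faces through $v$; the radial homotopy that on each cube is the affine contraction $(x,t)\mapsto v+(1-t)(x-v)$ toward its distinguished corner $v$ fixes $v$, collapses each cube onto $v$, and descends to $K$ because the isometric gluing maps between faces of cubes are affine and hence commute with it. Thus $K$ is contractible, and therefore so is $\nstar_{C^{(m)}}(v)$ for every $m\ge 2$; in particular $\nstar_{C''}(v)$ is contractible. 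Finally, composing the isomorphism $\nstar_{C''}(v)\to 2^{-2}K$, multiplication by $2^{2-m}$ from $2^{-2}K$ to $2^{-m}K$, and the isomorphism $2^{-m}K\to\nstar_{C^{(m)}}(v)$ yields the required $\phi_m$, an isomorphism of cube complexes multiplying all distances by $2^{2-m}$.

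The main obstacle is the second step: showing rigorously that for $m\ge 2$ the corner cubes $K_{\sigma,p}$ glue together exactly as prescribed by $\Lk_C(v)$ and along no other identifications, so that $\nstar_{C^{(m)}}(v)$ genuinely ``sees nothing beyond the link of $v$''. This forces one to track the (possibly complicated) face identifications defining $C$, to use that characteristic maps of cube complexes are injective on open faces, and to exploit that $m\ge 2$ keeps the corner cubes small enough to be disjoint within each ambient cube; the book-keeping is routine but delicate, and is exactly why the statement is restricted to $m\ge 2$.
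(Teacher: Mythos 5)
Your argument is correct, and the map you produce is in fact the same map as the paper's $\phi_m$ (scaling each corner cube toward its corner by $2^{2-m}$), but you arrive at it by a different route. The paper never identifies the star with a model built from the link: it works inside each cube $\sigma$ of $C$ with the union $X(\sigma,m)$ of the corner $2^{-m}$-cubes at \emph{all} corners (disjoint once $m\ge 2$), takes the obvious linear homotopy $H^\sigma$ from the identity of $X(\sigma,2)$ to the map collapsing each corner cube to its corner, notes that $H^\sigma$ and $H^\tau$ agree on $X(\tau,2)$ for $\tau$ a face of $\sigma$, and glues these to a single homotopy $H^C$ on $X(C,2)=\bigcup_\sigma X(\sigma,2)$. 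Contractibility of $\nstar_{C''}(v)$ is then immediate, since it is the component of $X(C,2)$ containing $v$, and $\phi_m$ is just the time-$(1-4/2^m)$ map of $H^C$ restricted to that component; no analysis of the gluing pattern is needed beyond compatibility with face maps. Your route proves the stronger structural fact that $\nstar_{C^{(m)}}(v)$ is isomorphic to a fixed cubical cone $K$ on $\Lk_C(v)$ rescaled by $2^{-m}$, which buys a cleaner conceptual picture (the star depends only on the link, and $\phi_m$ is literally a rescaling of one model) at the price of exactly the identification bookkeeping you flag as the delicate step, which the glued-homotopy formulation avoids entirely. One small caution: your parenthetical identification of $K$ with the star of $v_0$ in the complex $D$ from the proof of Theorem~\ref{thm:cubecomplete} is only literally available when $\Lk_C(v)$ is a simplicial complex, since that construction uses the vertex coordinates of an all-right \emph{simplicial} complex; for a general cube complex the link is only a simplex complex (cf.\ Figure~\ref{fig:four}), so you should lean on your direct description of $K$ (corner cubes glued along faces through $v$ via the face identifications of $C$), which does work in general.
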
 

\begin{proof} 
Let $\sigma$ be an $n$-cube of side length one, and let $V$ be the 
vertex set of $\sigma$.  For each $v\in V$,  and each $m\geq 0$, 
$\nstar_{\sigma^{(m)}}(v)$ is an $n$-subcube of $\sigma$ of side
length $2^{-m}$ containing $v$ as a vertex.  Moreover if $m\geq 2$, 
then the union 
$$X(\sigma,m)=\bigcup_{v\in V}\nstar_{\sigma^{(m)}}(v)$$ 
is a disjoint 
union of $2^n$ $n$-cubes, each containing a unique vertex from $V$.
It follows that there is a linear homotopy $H^\sigma$ from the 
identity map of $X(\sigma,2)$ to the locally constant map which 
sends each component of $X(\sigma,2)$ to the element of $V$ that 
it contains.  If $\tau$ is a face of $\sigma$, then $X(\tau,2)$
defined similarly is naturally a subspace of $X(\sigma,2)$ and 
$H^\sigma$ and $H^\tau$ agree on $X(\tau,2)$.  

Now if $C$ is any cube complex, define 
$$X(C,m)= \bigcup_{\sigma} X(\sigma,m),$$ 
where $\sigma$ ranges over all of the cubes of $C$.  
By the above remark, the homotopies $H^\sigma$ fit together to 
make a single homotopy $H^C$ from the identity map of $X(C,2)$ 
to the constant map that sends each component of $X(C,2)$ to the 
unique vertex of $C$ that it contains.  Since $H^C$ was defined 
linearly, if we put $t=1-4/2^m$, then for any $m\geq 2$, the 
map $H^C_t$ is an isomorphism from $X(C,2)$ to $X(C,m)$, and 
for any vertex $v$ of $C$ we may define $\phi_m$ to be the 
restriction of $H^C_t$ to the component $\nstar_{C''}(v)$ of 
$X(C,2)$.  
\end{proof}  

\begin{theorem} \label{thm:gromgen}
Let $C$ be a cube complex, with arbitrary vertex links.  Then 
$C$ is locally CAT(0) if and only if all vertex links in $C$ 
are flag simplicial complexes.  
\end{theorem}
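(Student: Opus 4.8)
The plan is to recover Gromov's criterion for arbitrary cube complexes by reducing, as usual, to the links. For a cube complex $C$ whose vertex links are simplex complexes, local CAT(0)-ness is equivalent to the condition that $\Lk_C(v)$ is CAT(1) for every vertex $v$; this is \cite[II.5.2]{brihae}, with the finiteness-of-shapes hypothesis replaced, in the infinite-dimensional case, by the alternative hypothesis of \cite[II.5.3]{brihae}, which holds here by Proposition~\ref{epsilon} --- exactly the device already used in the proofs of Theorems~\ref{flag} and~\ref{gromovthm}. So the theorem follows once one proves the following statement about the links themselves: an all-right simplex complex $L$ is CAT(1) if and only if it is a flag simplicial complex. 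The ``if'' direction is Theorem~\ref{flag}.

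For the ``only if'' direction, Theorem~\ref{flag} already supplies ``CAT(1) and simplicial $\Rightarrow$ flag'', so everything comes down to showing that a CAT(1) all-right simplex complex $L$ is genuinely simplicial: no simplex has a repeated vertex, and distinct simplices have distinct vertex sets. I expect this to be the main obstacle, precisely because a simplex complex, unlike a $\Delta$-complex, carries no built-in ordering of faces ruling out such degeneracies, so the argument must exploit the metric. I would argue by induction on $\dim L$, in the style of the ``CAT(1) $\Rightarrow$ flag'' half of the proof of Theorem~\ref{flag}. The base cases are the low-dimensional degeneracies: a loop-edge is a closed local geodesic of length $\pi/2$, two distinct edges with the same endpoints give one of length $\pi$, and a triangle not bounding a $2$-simplex gives one of length $3\pi/2$ --- each shorter than $2\pi$ and hence impossible in a CAT(1) space by \cite[II.1.4]{brihae}. (Checking that these are genuinely \emph{local} geodesics is the same angle estimate that enters the proof of Theorem~\ref{flag}.)

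For the inductive step, note that each vertex link $\Lk_L(v)$ is again an all-right simplex complex, of strictly smaller dimension, and is CAT(1) (once more by \cite[II.5.3]{brihae} together with Proposition~\ref{epsilon}), hence by induction a flag simplicial complex. Two distinct simplices of $L$ sharing a vertex $v$ and having equal vertex sets would induce two distinct simplices of $\Lk_L(v)$ with equal vertex sets, contradicting simpliciality of the link; and a non-embedded simplex of dimension $\geq 2$ has, among its proper faces, an instance of one of the degeneracies already handled. The one configuration not visible in any link is a pair of distinct top-dimensional simplices $\rho\neq\rho'$ with the same boundary. To rule this out, take the straight segment in $\rho$ from the barycentre of $\rho$ to the barycentre of one of its facets $\mu$, followed by the mirror segment in $\rho'$; since $L$ has no cells of dimension $\dim\rho+1$, the link $\Lk_L(\mu)$ is $0$-dimensional, so the two inward normal directions at the barycentre of $\mu$ are at distance $\geq\pi$, making this concatenation a local geodesic of length $2\arccos\sqrt{n/(n+1)}<\pi$ (with $n=\dim\rho$) joining the two barycentres. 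Running the same construction through a different facet yields a second such local geodesic, and two distinct geodesics of length $<\pi$ between a single pair of points violate uniqueness of geodesics in a CAT(1) space. Hence $L$ is simplicial, Theorem~\ref{flag} then makes it flag, and the characterization of the links --- and with it Theorem~\ref{thm:gromgen} --- follows.
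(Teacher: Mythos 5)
Your route is genuinely different from the paper's (which proves the ``if'' half by passing to the second cubical subdivision $C''$, Proposition~\ref{prop:subdiv}, and applying Theorem~\ref{gromovthm} to its universal cover, and the ``only if'' half via the star $\nstar_{C''}(v)$, the scaling maps of Proposition~\ref{starprop} and Theorem~\ref{thm:cubevcubical}), but as written it has two gaps. The first is the opening reduction: you invoke the link condition \emph{in both directions} for complexes with infinitely many shapes. The paper only ever uses \cite[II.5.2]{brihae} together with \cite[II.5.3]{brihae} and Proposition~\ref{epsilon} for the implication ``locally CAT(0) (or CAT(1)) $\Rightarrow$ links CAT(1)''; it never cites that machinery for the converse, and in the ``if'' direction of Theorem~\ref{thm:gromgen} it deliberately routes through $C''$ and Theorem~\ref{gromovthm} instead. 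The converse implication is exactly the one whose textbook proof rests on the Shapes-finite results of \cite[I.7]{brihae} (small balls isometric to balls in the cone over the link, and CAT(1)-ness of links of \emph{all} cells, not just vertices), and around a single vertex of an infinite-dimensional cube complex infinitely many isometry types of cells already occur, so this is not a routine citation: you must either verify that the weakened hypotheses of \cite[II.5.3]{brihae} really cover that direction, or argue it independently (the cheapest repair being the paper's own: subdivide and quote Theorem~\ref{gromovthm}).

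The second gap is in the new lemma your route requires, ``CAT(1) all-right simplex complex $\Rightarrow$ simplicial''. The inductive step asserts that two distinct simplices with equal vertex sets induce two distinct simplices of $\Lk_L(v)$ with equal vertex sets; this is false when the two simplices have distinct edges (two triangles on the same three vertices with six distinct edges induce link edges with disjoint vertex sets). Moreover the configuration you single out as ``not visible in any link'' --- a same-boundary pair of top-dimensional simplices --- \emph{is} visible in every vertex link once $\dim\rho\geq 2$ (two distinct $(\dim\rho-1)$-simplices of the link with the same vertex set), while the configurations that genuinely escape the links are the one-dimensional ones (loop edges and doubled edges), possibly accompanied by degenerate $2$-cells. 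There the ``same angle estimate as in Theorem~\ref{flag}'' is not automatic: if some $2$-cell has a corner at $v$ whose two sides map to the two ends of the loop (resp.\ to the two parallel edges), the short closed path is not a local geodesic, and one needs an auxiliary argument (for instance, showing that such a $2$-cell forces a loop or a doubled edge inside the link itself, so that the inductive hypothesis applies). None of this is necessarily fatal to the strategy, but it is precisely the work the paper avoids by never classifying CAT(1) simplex complexes at all, using Theorem~\ref{thm:cubevcubical} plus Proposition~\ref{starprop} instead; your barycentre-to-barycentre double-geodesic argument is sound (it is the spherical analogue of the proof of Theorem~\ref{thm:cubevcubical}), but it is aimed at a case the link induction already handles.
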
 

\begin{proof} 
If $C$ is a cube complex in which all vertex links are flag 
simplicial complexes, then $C''$ is a cubical complex with 
this property, and the universal cover $\widetilde C''$ of 
$C''$ satisfies the hypotheses of Theorem~\ref{gromovthm}.  
The universal cover $\widetilde C$ of $C$ is isometric to 
$\widetilde C''$, and so it too is CAT(0).  It follows that 
$C$ is locally CAT(0).  

Conversely, suppose that $C$ is a cube complex with a vertex 
$v$ such that the link of $v$ is not a flag simplicial complex.  
It suffices to show that for all $\epsilon$, the metric ball 
$B(v,\epsilon)$ around $v$ is not CAT(0).  Let $S$ denote 
$\nstar_{C''}(v)$, and note that $B(v,1/4)$ is contained in $S$.  
By Proposition~\ref{starprop} $S$ is contractible, and either 
$S$ is cubical but has a non-flag link or $S$ is not cubical.  
It follows from either Theorem~\ref{thm:cubevcubical} or
Theorem~\ref{gromovthm} that $S$ cannot be CAT(0).  Hence there 
exists a geodesic triangle $T$ in $S$ that violates the CAT(0) 
inequality.  Now pick any $\epsilon\leq 1/12$, and let $m$ be 
sufficiently large that $\phi_m(T)$ is contained in $B(v,\epsilon)$
where $\phi_m$ is the map in Proposition~\ref{starprop}.  Since 
$\phi_m(T)$ violates the CAT(0) inequality in $\nstar_{C^{(m)}}(v)$, 
it suffices to show that the embedding of $\phi_m(T)$ into $C$ is 
isometric.  Since $\phi_m(T)$ is contained in $B(v,\epsilon)$, 
it follows that the distance in $\nstar_{C^{(m)}}$ between any two 
points of $\phi_m(T)$ is at most $2\epsilon\leq 1/6$.  If there 
were a shorter path in $C$ between two points of $\phi_m(T)$, 
it could not leave $B(v,1/4)$.  Since $B(v,1/4)\subset \nstar_{C''}(v)$ 
and the inclusion of $\nstar_{C^{(m)}}(v)$ into $\nstar_{C''}(v)$ is 
an isometry, it follows that $\phi_m(T)$ embeds isometrically into 
$C$, and so $B(v,\epsilon)$ is not CAT(0).  
\end{proof}  

\begin{remark} 
The special case of Theorem~\ref{thm:gromgen} in which $C$ is 
assumed to be finite-dimensional is given in Gromov's original 
paper~\cite[section~4.2.C]{gromov} and 
in~\cite[theorem~II.5.20]{brihae}.  
\end{remark}

\begin{corollary}\label{cor:secsubdiv}
The second cubical subdivision of any locally CAT(0) cube complex is
cubical.  
\end{corollary}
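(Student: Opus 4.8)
The plan is to read this off directly from Theorem~\ref{thm:gromgen} and Proposition~\ref{prop:subdiv}, with essentially no extra work. First I would note that if $C$ is a locally CAT(0) cube complex, then by Theorem~\ref{thm:gromgen} every vertex link of $C$ is a flag simplicial complex; in particular, every vertex link of $C$ is a simplicial complex. This is exactly the hypothesis under which Proposition~\ref{prop:subdiv} operates.

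Having verified the hypothesis, I would simply invoke part~(3) of Proposition~\ref{prop:subdiv}, which asserts that when every vertex link of $C$ is simplicial, the second cubical subdivision $C''$ is a cubical complex. Since $C''$ is by definition the second cubical subdivision of $C$, this is the statement of the corollary and the proof is complete. It is worth pointing out along the way why one passes to the \emph{second} subdivision rather than the first: parts~(1) and~(2) of Proposition~\ref{prop:subdiv} only guarantee that cubes of $C'$ embed and that the intersection of two cubes of $C'$ is a \emph{disjoint union} of cubes of $C'$ (not necessarily a single common face), so $C'$ itself need not be cubical; it is the further subdivision, together with Lemma~\ref{lem:cube}, that forces intersections to be single faces.

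There is no real obstacle here, as all of the genuine content has already been absorbed into the lemmas and theorems of Section~\ref{sec:appthree} (in particular Proposition~\ref{starprop}, Theorem~\ref{thm:cubevcubical}, Lemma~\ref{lem:cube}, and the flag-link criterion Theorem~\ref{gromovthm}). The one remark I would make is that the hypothesis cannot be weakened to drop the link condition entirely: by the Remark following Proposition~\ref{prop:subdiv}, if some vertex link of $C$ is not simplicial then the same is true of every cubical subdivision of $C$, so no cubical subdivision of such a $C$ can be cubical — the locally CAT(0) assumption (or at least the hypothesis that all vertex links are simplicial complexes) is genuinely used.
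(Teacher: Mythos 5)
Your proof is correct and is exactly the paper's argument: Theorem~\ref{thm:gromgen} shows the vertex links of a locally CAT(0) cube complex are flag (hence simplicial), so Proposition~\ref{prop:subdiv} applies and its part~(3) gives that $C''$ is cubical. The extra remarks about why the first subdivision does not suffice and why the link hypothesis is needed are accurate but not required.
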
 

\begin{proof} 
If $C$ is a locally CAT(0) cube complex, then
Theorem~\ref{thm:gromgen} shows that $C$ satisfies the hypotheses of 
Proposition~\ref{prop:subdiv}, and hence $C''$ is cubical.  
\end{proof}

\bibliography{mktjt}

\begin{thebibliography}{10}

\bibitem{bartechtlueck}
A.~Bartels, S.~Echterhoff, and W.~L{\"u}ck.
\newblock Inheritance of isomorphism conjectures under colimits.
\newblock In {\em {$K$}-theory and noncommutative geometry}, EMS Ser. Congr.
  Rep., pages 41--70. Eur. Math. Soc., Z\"urich, 2008.

\bibitem{barlue}
A.~Bartels and W.~L{\"u}ck.
\newblock The {B}orel conjecture for hyperbolic and {CAT}(0)-groups, 2009.

\bibitem{BDH}
G.~Baumslag, E.~Dyer, and A.~Heller.
\newblock The topology of discrete groups.
\newblock {\em J. Pure Appl. Algebra}, 16(1):1--47, 1980.

\bibitem{besques}
M.~Bestvina.
\newblock Questions in geometric group theory, 2004.

\bibitem{block}
J.~Block.
\newblock Some remarks concerning the {B}aum-{C}onnes conjecture.
\newblock {\em Comm. Pure Appl. Math.}, 50(9):813--820, 1997.

\bibitem{bln}
N.~Brady, I.~J. Leary, and B.~E.~A. Nucinkis.
\newblock On algebraic and geometric dimensions for groups with torsion.
\newblock {\em J. London Math. Soc. (2)}, 64(2):489--500, 2001.

\bibitem{bridson}
M.~R. Bridson.
\newblock Geodesics and curvature in metric simplicial complexes.
\newblock In {\em Group theory from a geometrical viewpoint ({T}rieste, 1990)},
  pages 373--463. World Sci. Publ., River Edge, NJ, 1991.

\bibitem{brihae}
M.~R. Bridson and A.~Haefliger.
\newblock {\em Metric spaces of non-positive curvature}, volume 319 of {\em
  Grundlehren der Mathematischen Wissenschaften [Fundamental Principles of
  Mathematical Sciences]}.
\newblock Springer-Verlag, Berlin, 1999.

\bibitem{charneydavis}
R.~M. Charney and M.~W. Davis.
\newblock Strict hyperbolization.
\newblock {\em Topology}, 34(2):329--350, 1995.

\bibitem{dow}
C.~H. Dowker.
\newblock Topology of metric complexes.
\newblock {\em Amer. J. Math.}, 74:555--577, 1952.

\bibitem{gromov}
M.~Gromov.
\newblock Hyperbolic groups.
\newblock In {\em Essays in group theory}, volume~8 of {\em Math. Sci. Res.
  Inst. Publ.}, pages 75--263. Springer, New York, 1987.

\bibitem{Hatcher}
A.~Hatcher.
\newblock {\em Algebraic topology}.
\newblock Cambridge University Press, Cambridge, 2002.

\bibitem{Hausmann}
J.-C. Hausmann.
\newblock Every finite complex has the homology of a duality group.
\newblock {\em Math. Ann.}, 275(2):327--336, 1986.

\bibitem{HigKas}
N.~Higson and G.~Kasparov.
\newblock Operator {$K$}-theory for groups which act properly and isometrically
  on {H}ilbert space.
\newblock {\em Electron. Res. Announc. Amer. Math. Soc.}, 3:131--142
  (electronic), 1997.

\bibitem{janswi}
T.~Januszkiewicz and J.~{\'S}wi{\c{a}}tkowski.
\newblock Simplicial nonpositive curvature.
\newblock {\em Publ. Math. Inst. Hautes \'Etudes Sci.}, 104:1--85, 2006.

\bibitem{KT}
D.~M. Kan and W.~P. Thurston.
\newblock Every connected space has the homology of a {$K(\pi ,1)$}.
\newblock {\em Topology}, 15(3):253--258, 1976.

\bibitem{raeyong}
R.~Kim.
\newblock Every finite complex has the homology of some cat(0) cubical duality
  group, 2012.

\bibitem{lafforgue}
V.~Lafforgue.
\newblock {$K$}-th\'eorie bivariante pour les alg\`ebres de {B}anach et
  conjecture de {B}aum-{C}onnes.
\newblock {\em Invent. Math.}, 149(1):1--95, 2002.

\bibitem{LearyNucinkis}
I.~J. Leary and B.~E.~A. Nucinkis.
\newblock Every {CW}-complex is a classifying space for proper bundles.
\newblock {\em Topology}, 40(3):539--550, 2001.

\bibitem{lueck}
W.~L{\"u}ck.
\newblock Chern characters for proper equivariant homology theories and
  applications to {$K$}- and {$L$}-theory.
\newblock {\em J. Reine Angew. Math.}, 543:193--234, 2002.

\bibitem{luecksur}
W.~L{\"u}ck.
\newblock On the {F}arrell-{J}ones and related conjectures.
\newblock In {\em Cohomology of groups and algebraic {$K$}-theory}, volume~12
  of {\em Adv. Lect. Math. (ALM)}, pages 269--341. Int. Press, Somerville, MA,
  2010.

\bibitem{Maunder}
C.~R.~F. Maunder.
\newblock A short proof of a theorem of {K}an and {T}hurston.
\newblock {\em Bull. London Math. Soc.}, 13(4):325--327, 1981.

\bibitem{McDuff}
D.~McDuff.
\newblock On the classifying spaces of discrete monoids.
\newblock {\em Topology}, 18(4):313--320, 1979.

\bibitem{mislin}
G.~Mislin and A.~Valette.
\newblock {\em Proper group actions and the {B}aum-{C}onnes conjecture}.
\newblock Advanced Courses in Mathematics. CRM Barcelona. Birkh\"auser Verlag,
  Basel, 2003.

\bibitem{moussong}
G.~Moussong.
\newblock {\em Hyperbolic {C}oxeter Groups}.
\newblock PhD thesis, Ohio State, 1988.

\bibitem{nibloreevesII}
G.~A. Niblo and L.~D. Reeves.
\newblock Groups acting on {${\rm CAT}(0)$} cube complexes.
\newblock {\em Geom. Topol.}, 1:approx.\ 7 pp.\ (electronic), 1997.

\bibitem{nibloreeves}
G.~A. Niblo and L.~D. Reeves.
\newblock The geometry of cube complexes and the complexity of their
  fundamental groups.
\newblock {\em Topology}, 37(3):621--633, 1998.

\bibitem{polakwise}
J.~K.~C. Pol\'ak and D.~T. Wise.
\newblock A note on {\cal{vh}} subdivisions, 2012.

\bibitem{quillen}
D.~Quillen.
\newblock The spectrum of an equivariant cohomology ring. {I}, {II}.
\newblock {\em Ann. of Math. (2)}, 94:549--572; ibid. (2) 94 (1971), 573--602,
  1971.

\bibitem{RourkeSanderson}
C.~P. Rourke and B.~J. Sanderson.
\newblock {$\triangle $}-sets. {I}. {H}omotopy theory.
\newblock {\em Quart. J. Math. Oxford Ser. (2)}, 22:321--338, 1971.

\bibitem{sag}
M.~Sageev.
\newblock Ends of group pairs and non-positively curved cube complexes.
\newblock {\em Proc. London Math. Soc. (3)}, 71(3):585--617, 1995.

\bibitem{wegner}
C.~Wegner.
\newblock The {$K$}-theoretic {F}arrell-{J}ones conjecture for {CAT}(0)-groups.
\newblock {\em Proc. Amer. Math. Soc.}, 140(3):779--793, 2012.

\bibitem{whitehead}
G.~W. Whitehead.
\newblock {\em Elements of homotopy theory}, volume~61 of {\em Graduate Texts
  in Mathematics}.
\newblock Springer-Verlag, New York, 1978.

\end{thebibliography}
\bibliographystyle{abbrv} 

\affiliationone{Ian J. Leary\\
Department of Mathematics\\
The Ohio State University\\
231 West 18th Avenue\\
Columbus\\
Ohio 43210\\
USA}
\affiliationtwo{~}
\affiliationthree{Current address:\\
School of Mathematics\\
University of Southampton\\
Southampton\\
SO17 1BJ\\
UK
\email{i.j.leary@soton.ac.uk}}

\end{document}